\theoremstyle{plain}
\newtheorem{Theorem}{Theorem}[section]
\newtheorem{Lemma}[Theorem]{Lemma}
\newtheorem{Proposition}[Theorem]{Proposition}
\newtheorem{Corollary}[Theorem]{Corollary}
\theoremstyle{definition}
\newtheorem{Assumption}[Theorem]{Assumption}
\theoremstyle{remark}
\newtheorem{Remark}[Theorem]{Remark}
\newtheorem{Example}[Theorem]{Example}
\def\bbC{{\mathbb C}}
\def\bbD{{\mathbb X}^1}
\def\bbR{{\mathbb R}}
\def\bbX{{\mathbb X}^0}
\def\bbZ{{\mathbb Z}}
\def\calC{{\mathcal C}}
\def\calL{{\mathcal L}}
\def\calO{{\mathcal O}}
\def\rmapp{{\mathrm{app}}}
\def\rmd{{\mathrm d}}
\def\rmm{{\mathrm m}}
\def\rmmax{{\mathrm{max}}}
\def\CNDF{C}
\def\CNIF{\eps}
\def\cc{{\mathrm{c.c.}}}
\def\Id{\operatorname{Id}}
\def\image{\operatorname{range}}
\def\eps{\varepsilon}
\numberwithin{equation}{section}
\begin{document}

\title{From Newton's cradle to the discrete $p$-Schr\"odinger equation}   

\author{
Brigitte Bid\'egaray-Fesquet$^{\, a}$,
Eric Dumas$^{\, b}$,
Guillaume James$^{\, a,c}$
\\
~\\
$\,^{a}$
Laboratoire Jean Kuntzmann, Universit\'e de Grenoble and CNRS\\
51 rue des Math\'ematiques, Campus de Saint Martin d'H\`eres,\\
BP 53, 38041 Grenoble cedex 9, France.
~\\~\\
$\,^{b}$
Institut Fourier, Universit\'e Grenoble 1 and CNRS\\
100 rue des Math\'ematiques, Campus de Saint Martin d'H\`eres,\\
BP~74, 38402 Saint Martin d'H\`eres cedex, France.
~\\~\\
$\,^{c}$
INRIA, Bipop Team-Project\\
ZIRST Montbonnot, 655 Avenue de l'Europe,\\ 
38334 Saint Ismier, France.
~\\~\\
{\small Email : Brigitte.Bidegaray@imag.fr,~edumas@ujf-grenoble.fr,~Guillaume.James@imag.fr}
}

\maketitle
\begin{abstract}
We investigate the dynamics of a 
chain of oscillators coupled by fully-nonlinear interaction potentials. This class of 
models includes Newton's cradle with Hertzian contact interactions between neighbors.
By means of multiple-scale analysis, we give a rigorous asymptotic 
description of small amplitude solutions over large times. 
The envelope equation leading to approximate solutions is a 
discrete $p$-Schr\"odinger equation. 
Our results include the existence of long-lived breather solutions 
to the original model. 
For a large class of localized initial conditions, we also estimate
the maximal decay of small amplitude solutions over long times.
\end{abstract}

\section{Introduction}

Newton's cradle is a nonlinear 
mechanical system consisting
of a chain of identical beads suspended from a bar by inelastic strings (see figure \ref{boules}).
All beads behave like pendula in the absence of contact
with nearest neighbors, i.e. they perform
time-periodic oscillations in a local confining potential 
$\Phi(x)=\frac{1}{2}x^2 + O(x^4)$ due to gravity.
More generally, the local potential $\Phi$ may account for different types of stiff attachments \cite{JKC11}
or an elastic matrix surrounding the beads \cite{lavigne,ahetal}.
Mechanical constraints between touching beads 
can be described by the Hertzian interaction potential
$V(r)=\frac{k}{1+\alpha}(- r)_+^{\alpha +1}$, where
$(a)_+ = \rm{max}(a,0)$, $k$ depends on the ball radius and material
and $\alpha = 3/2$. The dynamical equations 
read in dimensionless form \cite{Hutzler-Delaney-Weaire-MacLeod04}
\begin{equation}
\label{eq:cradle1}
\ddot x_n + \Phi^\prime (x_n ) = V'(x_{n+1}-x_n) - V'(x_n-x_{n-1}), \quad n\in\bbZ,
\end{equation}
$x_n$ being the horizontal displacement of the $n$th bead from its equilibrium position at which the pendulum is vertical.

\begin{figure}[h]
\begin{center}
\includegraphics[scale=0.11]{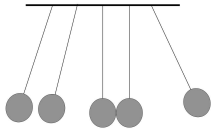}
\end{center}
\caption{\label{boules} 
Schematic representation of Newton's cradle.}
\end{figure}

Contact interactions between beads induce
a nonlinear coupling, which can lead to
complex dynamical phenomena like the propagation of solitary waves \cite{neste2,friesecke,mackay,english,stefanov,JKC11},
modulational instabilities \cite{James11,JKC11,boe} and the excitation of spatially localized
stationary (time-periodic) or moving breathers \cite{theo2,James11,sahvm,nolta12,JKC11}.
For small amplitude oscillations, it has been recently argued that such
dynamical phenomena can be captured by
the discrete $p$-Schr\"odinger (DpS) equation 
\begin{equation}
\label{dps}
2i \tau_0 \dot{A_n}=
(A_{n+1}-A_n)\, |A_{n+1}-A_n |^{\alpha -1} 
-(A_{n}-A_{n-1})\, |A_{n}-A_{n-1} |^{\alpha -1},
\end{equation}
with some time constant $\tau_0$ depending on $\alpha$.
More precisely, static breather solutions to (\ref{eq:cradle1}) were
numerically computed in \cite{JKC11} and compared to 
approximate solutions of the form 
\begin{equation}
\label{approx}
x_n^{\rm{app}} (t)=2\, \varepsilon\, \mbox{Re }[\, A_n(\varepsilon^{\alpha -1} t )\, e^{it} \, ] ,
\end{equation}
where $\varepsilon \ll 1$ and
$A_n$ denotes a breather solution to the DpS equation (\ref{dps}),
which depends on the slow time variable $\tau = \varepsilon^{\alpha -1} t$.
For small amplitudes, the Ansatz (\ref{approx}) was found to 
approximate breather solutions to (\ref{eq:cradle1}) with good 
accuracy \cite{JKC11}, 
and  the same property was established in \cite{James11} for periodic 
traveling waves. Moreover, a small amplitude velocity perturbation 
at the boundary of a semi-infinite chain (\ref{eq:cradle1}) generates a traveling breather 
whose profile is qualitatively close to (\ref{approx}), where 
$A_n$ corresponds to a traveling breather solution 
of the DpS equation \cite{sahvm,JKC11}.

In this paper, we put the relation between the original lattice (\ref{eq:cradle1})
and the DpS equation onto a rigorous footing. Our main result can be stated as follows
(a more precise statement extended to more general potentials 
is given in theorem \ref{th:classical}, section \ref{thms}).
Given a smooth ($C^2$) solution $A=(A_n(\tau ))_{n\in \mathbb{Z}}$ to (\ref{dps}) defined for $\tau \in [0,T]$, if
an initial condition for (\ref{eq:cradle1}) is
$O(\varepsilon^{\alpha})$-close to the Ansatz (\ref{approx}) at $t=0$,
then the corresponding solution to (\ref{eq:cradle1}) remains 
$O(\varepsilon^{\alpha})$-close
to the approximation (\ref{approx}) on $O(\varepsilon^{1-\alpha})$ time scales.
These error estimates hold in the usual sequence spaces
$\ell_p$ with $1\leq p \leq +\infty$.
In addition, if $A$ is a global and 
bounded solution to (\ref{dps}) in $\ell_p (\mathbb{Z})$ and $\delta \in (1,\alpha  )$ is fixed, 
then the same procedure yields $O(|\ln{\varepsilon}|\, \varepsilon^{\delta })$-close
approximate solutions
up to times $t=O(|\ln{\varepsilon}|\, \varepsilon^{1-\alpha})$
(theorem \ref{th:nonclassical}).
Moreover, similar estimates allow one
to approximate the evolution of {\em all} sufficiently small initial data to  (\ref{eq:cradle1})
in $(\ell_p (\mathbb{Z}))^2$ (theorems \ref{th:classicalbis} and \ref{th:nonclassicalbis}). 

Two applications of the above error estimates are presented.
Firstly, for all nontrivial solutions $A$ of the DpS equation in $\ell_2 (\mathbb{Z})$ we demonstrate that  
$\inf_{\tau \in\mathbb{R}}{\| A(\tau ) \|_{\infty}}>0$, i.e. 
solutions associated with localized (square-summable) initial
conditions do not completely disperse. Using this result 
and the above error bounds, we estimate
the maximal decay of small amplitude solutions to (\ref{eq:cradle1}) over long times
for a large class of localized initial conditions.
Secondly, from a breather existence theorem proved in \cite{js12} for the DpS equation,
we deduce the existence of stable small amplitude
``long-lived" breather solutions to equation (\ref{eq:cradle1}),
which remain close to time-periodic and spatially localized oscillations over long times.
This result completes a previous existence theorem for stationary breather solutions of (\ref{eq:cradle1})
proved in \cite{nolta12}, which was restricted to anharmonic on-site potentials $\Phi$
and small values of the coupling constant $k$.
More generally, the present justification of the DpS equation is also useful in the context of
numerical simulations of granular chains. Indeed, the DpS system is much easier to simulate than equation (\ref{eq:cradle1})
due to the fact that fast local oscillations have been averaged, 
which allows to perform
larger numerical integration steps.

Our results are in the same spirit as rigorous derivations
of the continuum cubic nonlinear Schr\"odinger or Davey-Stewartson
equations, which approximate the evolution of the envelope of 
slowly modulated normal modes in a large class of nonlinear lattices
\cite{gian,gian3,bambusi2,schneider2} and
hyperbolic systems \cite{DJMR95,JMR98,sch98,col02,CL04}.
In addition, our extension of the error bounds
up to times $\tau$ growing logarithmically in $\varepsilon$
(theorem \ref{th:nonclassical}) is reminiscent of refined
approximations of nonlinear geometric optics derived in \cite{lr00}.
A specificity of our result is the spatially discrete
character of the amplitude equation (\ref{dps}), which 
allows one to describe nonlinear waves with
rather general spatial behaviors 
(see theorem \ref{th:classicalbis} and \ref{th:nonclassicalbis}).
Another particular feature of our study is the fact that
potentials and nonlinearities can have limited smoothness, so that
high-order corrections seem hardly available.
 
The outline of the paper is as follows. 
In section \ref{evol}, we introduce a generalized version of system (\ref{eq:cradle1})
involving more general potentials, which is reformulated as a first order differential
equation in $(\ell_p (\mathbb{Z}))^2$. Section \ref{sec:linear} presents elementary
properties of periodic solutions to the linearized evolution problem, which will be used
in the subsequent analysis. 
The well-posedness of the Cauchy problem for the nonlinear evolution equation is
established in section \ref{wp}.
In section \ref{multiple}, we perform a formal multiple-scale analysis, yielding
approximate solutions to (\ref{eq:cradle1}) consisting of slow modulations of
periodic solutions. In this approximation, the leading term
(\ref{approx}) is supplemented by a higher-order corrector.
Some qualitative properties of the amplitude equation are detailed in section \ref{qual},
including well-posedness and the study of spatially localized solutions.
The main results on the justification of the multiple-scale analysis
from error bounds are stated in section \ref{thms}. The error bounds 
are derived in sections \ref{thms} and \ref{just}
(the later contains the proof of theorems \ref{th:classical}
and \ref{th:nonclassical}, which are mainly based on Gronwall estimates).
Section \ref{conclu}
provides a discussion of the above results and points out some open problems,
and some technical results are detailed in the appendix.

\section{\label{dynfor}Dynamical equations and multiple-scale analysis}

\subsection{\label{evol}Nonlinear lattice model}

We first introduce a more general version of system (\ref{eq:cradle1})
incorporating a larger class of potentials. 
We consider an interaction potential
$V\in\calC^2(\bbR,\bbR)$ of the form $V=V_\alpha+W$, where 
\begin{equation}
\label{as:Hertz}
V_\alpha (r) = 
\left\{
\begin{array}{c}
\frac{k_-}{1+\alpha}\, |r|^{\alpha +1} \mbox{ for } r\leq 0,\\
\frac{k_+}{1+\alpha}\, r^{\alpha +1} \mbox{ for } r\geq 0,
\end{array}
\right. 
\end{equation}
and $\alpha >1$, $(k_- , k_+ )\neq (0,0)$. 
In addition, $W$ is a higher order correction satisfying
\begin{equation}
\label{as:w}
W^\prime (0)  = 0,
\quad  W''(r) = \calO(|r|^{\alpha-1+\beta}) 
\quad \textrm{as }r\to0,
\end{equation}
for some constant $\beta>0$. 
We can therefore write $W^\prime (r)=|r|^\alpha\rho(r)$ where $\rho(r)=\calO(|r|^\beta)$ as $r\to0$.
Under the above assumptions, the principal part of $V$ satisfies
$V^\prime_\alpha(\lambda \, r) = \lambda^\alpha V_\alpha^\prime (r)$
for all $r\in\bbR$ and $\lambda>0$.
Note that one recovers 
the classical Hertzian potential by fixing $k_- = k >0$, $k_+=0$ and $W=0$.
The case $k_- = k_+$ and $W=0$ corresponds to an homogeneous even interaction potential.

In addition, the local potential $\Phi$ is assumed of the form
$\Phi(x)=\frac{1}{2}x^2 + \phi (x)$, where $\phi\in\calC^2(\bbR ,\bbR)$ satisfies  
\begin{equation}
\label{as:phi}
\phi^\prime (0)  = 0, \quad \phi''(x) = \calO(|x|^{\alpha-1+\gamma}) 
\quad \textrm{as }x\to0,
\end{equation}
for some constant $\gamma>0$.
We have therefore $\phi^\prime (x)=|x|^\alpha\chi(x)$ with $\chi(x)=\calO(|x|^\gamma)$ as $x\to0$.
The particular case of an harmonic on-site potential $\Phi$ is obtained by fixing $\phi =0$.

The dynamical equations read
\begin{equation}
\label{eq:cradle}
\ddot x_n + x_n = F(x)_n, \quad n\in\bbZ,
\end{equation}
where
\begin{equation*}
F(x)_n = V'(x_{n+1}-x_n) - V'(x_n-x_{n-1}) - \phi'(x_n), 
\quad n\in\bbZ.
\end{equation*}
From the above assumptions, the leading order nonlinear terms of (\ref{eq:cradle})
originate from the interaction potential $V_\alpha$.
We denote $x=(x_n)_{n\in\bbZ}$ and address solutions $x(t)\in\ell_p\equiv\ell_p(\bbZ,\bbR)$.
Throughout the paper we assume $p \in [1,\infty ]$ unless explicitly stated.

In what follows we reformulate
equation \eqref{eq:cradle} as a first order differential equation governing the $X=(x,\dot x)^T$ variable where $X(t)\in\ell_p^2$. 
This equation reads 
\begin{equation}
\label{eq:1storder}
\dot X = J\, X + G(X), 
\end{equation}
where
\begin{equation*}
J = \begin{pmatrix}0 & \Id \\ -\Id & 0\end{pmatrix}, 
\quad G(X_1, X_2) = \begin{pmatrix}0  \\ F(X_1) \end{pmatrix},
\end{equation*}
and $\Id$ is the identity map in $\ell_p$. Using the usual difference operators 
$(\delta^+x)_n=x_{n+1}-x_n$ and $(\delta^-x)_n=x_n-x_{n-1}$, we can write $F$ in a compact form:
\begin{equation*}
F(x)=\delta^+V'(\delta^-x)-\phi'(x).
\end{equation*} 
The Banach spaces $\ell_p^2$ are equipped
with the following norms
\begin{equation}
\label{normelp}
\|X\|_p  =   \left(\sum_{n\in\bbZ} (x_n^2+\dot{x}_n^2)^{p/2}  \right)^{1/p} \text{ if } 1\leq p<\infty, \quad
\|X\|_\infty  =  \sup_{n\in\bbZ}{(x_n^2+\dot{x}_n^2)^{1/2}}.
\end{equation}
The map $G$ is smooth and fully-nonlinear in $\ell_p^2$, as shown by the following lemma.
Below and in the rest of the paper, we use the abbreviations
{\em c.n.d.f.} for continuous non-decreasing functions and
{\em c.n.i.f.} for continuous non-increasing functions.

\begin{Lemma}
\label{estglp} 
The map $G\in\calC^1(\ell_p^2,\ell_p^2)$ satisfies
\begin{equation}
\label{estmapg}
\| G(X)\|_p = \calO (\| X \|_p^\alpha ), \quad
\| DG(X)\|_{\mathcal{L}(\ell_p^2)} = \calO(\| X \|_p^{\alpha-1} )
\end{equation} 
when $X\rightarrow 0$ in $\ell_p^2$.
Moreover, $G$ and $DG$ are bounded on bounded sets in $\ell_p^2$,
and there exist c.n.d.f. $C_G$, $C_D$
such that for all $X\in \ell_p^2$
\begin{equation}
\label{estgdg}
\| G(X)\|_p\leq C_G(\| X \|_\infty) \, \| X \|_p, \quad
\| DG(X)\|_{\mathcal{L}(\ell_p^2)}\leq C_D(\| X \|_\infty) .
\end{equation}
\end{Lemma}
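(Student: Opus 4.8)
The plan is to reduce the whole statement to the scalar lattice map $F:\ell_p\to\ell_p$. Indeed the block structure $G(X_1,X_2)=(0,F(X_1))^T$ gives $\|G(X)\|_p=\|F(X_1)\|_p$ and $DG(X)(H_1,H_2)=(0,DF(X_1)H_1)^T$, whence $\|DG(X)\|_{\mathcal{L}(\ell_p^2)}=\|DF(X_1)\|_{\mathcal{L}(\ell_p)}$ (using $\|(0,w)\|_p=\|w\|_p$ and $\|H_1\|_p\leq\|(H_1,H_2)\|_p$). Since $\|X_1\|_p\leq\|X\|_p$ and $\|X_1\|_\infty\leq\|X\|_\infty$, any bound on $F$ expressed through $\|X_1\|_p$ and $\|X_1\|_\infty$ transfers to $G$, the c.n.d.f. being non-decreasing. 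Writing $\delta^\pm$ for the difference operators, which are bounded and linear on $\ell_p$ with $\|\delta^\pm\|_{\mathcal{L}(\ell_p)}\leq2$, and introducing the superposition operators $N_g(u)=(g(u_n))_{n\in\bbZ}$, I decompose $F=\delta^+\circ N_{V'}\circ\delta^- - N_{\phi'}$, so that everything follows from controlling $N_{V'}$ and $N_{\phi'}$ on $\ell_p$.

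The heart of the proof is a claim about $N_g$ for a generic $g\in\calC^1(\bbR,\bbR)$ with $g(0)=0$: the map $N_g$ is well defined and of class $\calC^1$ from $\ell_p$ to $\ell_p$, with $(DN_g(u)h)_n=g'(u_n)\,h_n$ and $\|DN_g(u)\|_{\mathcal{L}(\ell_p)}\leq\sup_{|r|\leq\|u\|_\infty}|g'(r)|$. Well-definedness and the operator bound come from $|g(u_n)|\leq\big(\sup_{|r|\leq\|u\|_\infty}|g'(r)|\big)|u_n|$ together with $\|u\|_\infty\leq\|u\|_p$. For differentiability I would use the Taylor remainder
\[
R_n=g(u_n+h_n)-g(u_n)-g'(u_n)\,h_n = h_n\int_0^1\big(g'(u_n+s\,h_n)-g'(u_n)\big)\,\rmd s,
\]
so that with $\omega_n=\sup_{s\in[0,1]}|g'(u_n+s\,h_n)-g'(u_n)|$ the bound $|R_n|\leq|h_n|\,\omega_n$ yields $\|R\|_p\leq(\sup_n\omega_n)\,\|h\|_p$ for every $p\in[1,\infty]$. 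The key point, and the main obstacle, is to see that $\sup_n\omega_n\to0$ as $\|h\|_p\to0$: here one uses that $u$ is a fixed element of $\ell_p\subset\ell_\infty$, so all points $u_n$ and $u_n+s\,h_n$ stay in the fixed compact interval $[-\|u\|_\infty-1,\|u\|_\infty+1]$ once $\|h\|_\infty\leq1$; since $g'$ is uniformly continuous there and $\|h\|_\infty\leq\|h\|_p$, one gets $\sup_n\omega_n\leq\eta(\|h\|_p)$ for some modulus $\eta\to0$. This is precisely where the discrete setting is more favorable than $L_p$, in which superposition operators generically fail to be $\calC^1$: the embedding $\ell_p\hookrightarrow\ell_\infty$ converts $\ell_p$-smallness of $h$ into uniform smallness, which is exactly what the uniform continuity of $g'$ requires. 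The same estimate applied to $\|DN_g(u)-DN_g(u')\|_{\mathcal{L}(\ell_p)}\leq\sup_n|g'(u_n)-g'(u'_n)|$ gives continuity of $u\mapsto DN_g(u)$, hence $N_g\in\calC^1$.

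Granting the claim, the chain rule applied to $F=\delta^+\circ N_{V'}\circ\delta^- - N_{\phi'}$ gives $F\in\calC^1(\ell_p,\ell_p)$ with
\[
\big(DF(x)h\big)=\delta^+\!\big(\,(V''((\delta^-x)_n)\,(\delta^-h)_n)_n\,\big)-\big(\phi''(x_n)\,h_n\big)_n,
\]
and therefore $G\in\calC^1(\ell_p^2,\ell_p^2)$. For the quantitative bounds I would first record the pointwise estimates implied by the assumptions: $V'(0)=V''(0)=0$ with $|V'(r)|\leq C|r|^\alpha$ and $|V''(r)|\leq C|r|^{\alpha-1}$ for small $|r|$ (the leading part coming from $V_\alpha$, the corrector $W$ being higher order by \eqref{as:w}), and $|\phi'(x)|\leq C|x|^{\alpha+\gamma}$, $|\phi''(x)|\leq C|x|^{\alpha-1+\gamma}$ by \eqref{as:phi}.

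The global bounds \eqref{estgdg} then follow by taking suprema over balls: with $\|\delta^-x\|_\infty\leq2\|x\|_\infty$ one may set $C_G(s)=C_D(s)=4\sup_{|r|\leq2s}|V''(r)|+\sup_{|r|\leq s}|\phi''(r)|$, which is a c.n.d.f. by continuity of $V''$ and $\phi''$; boundedness of $G$ and $DG$ on bounded sets is an immediate consequence since $\|X\|_\infty\leq\|X\|_p$. For the sharp near-zero behaviour I would use, for $X$ small, the embedding $\|y\|_{\alpha p}\leq\|y\|_p$ (valid since $\alpha p\geq p$) to get $\|N_{V'}(y)\|_p\leq C\|y\|_{\alpha p}^\alpha\leq C\|y\|_p^\alpha$; with $y=\delta^-x$ and the $\delta^\pm$ bounds this yields $\|F(x)\|_p=\calO(\|x\|_p^\alpha)$, the $\phi'$-part being of even higher order, hence $\|G(X)\|_p=\calO(\|X\|_p^\alpha)$. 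Likewise $\sup_n|V''((\delta^-x)_n)|\leq C(2\|x\|_\infty)^{\alpha-1}$ and $\sup_n|\phi''(x_n)|\leq C\|x\|_\infty^{\alpha-1+\gamma}$ give $\|DF(x)\|_{\mathcal{L}(\ell_p)}\leq C\|x\|_\infty^{\alpha-1}\leq C\|x\|_p^{\alpha-1}$, whence $\|DG(X)\|_{\mathcal{L}(\ell_p^2)}=\calO(\|X\|_p^{\alpha-1})$, completing the proof.
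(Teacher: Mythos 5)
Your proof is correct and follows essentially the same route as the paper's: both treat $V'$ and $\phi'$ as superposition operators on $\ell_p$, obtain the $\calC^1$ property from the embedding $\ell_p\subset\ell_\infty$ combined with uniform continuity of $V''$, $\phi''$ on compact intervals, and derive both (\ref{estgdg}) and (\ref{estmapg}) from mean-value bounds of the form $\|f(x)\|_p\leq \sup_{|r|\leq\|x\|_\infty}|f'(r)|\,\|x\|_p$. The only cosmetic differences are that you spell out in full what the paper cites as ``classical''/``standard'' (the Taylor-remainder argument for differentiability of $N_g$), and that you get $\|G(X)\|_p=\calO(\|X\|_p^\alpha)$ via the embedding $\|y\|_{\alpha p}\leq\|y\|_p$, whereas the paper reaches it from the same two inequalities used for (\ref{estgdg}) together with $M_{V'}(r)=\calO(r^{\alpha-1})$ and $\|x\|_\infty\leq\|x\|_p$.
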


\begin{proof}
It is a classical result that the functions $f=V'$ or $f=\phi'$ can be viewed as smooth
operators on $\ell_p(\bbZ,\bbR)$ via $(f(x))_n\stackrel{\text{\tiny def}}{=}f(x_n)$. 
The property $V'\in\calC^1(\ell_p,\ell_p)$ follows from
the continuous embedding $\ell_p \subset \ell_\infty$,
the fact that $V'\in\calC^1(\mathbb{R})$, $V^\prime (0)=0$
and $V''$ is uniformly continuous on compact intervals.
The property $\phi'\in\calC^1(\ell_p,\ell_p)$ follows from the same arguments.
These properties imply that
$F\in\calC^1(\ell_p,\ell_p)$ and $G\in\calC^1(\ell_p^2,\ell_p^2)$.

Moreover, standard estimates yield 
the following inequalities for $f = V^\prime , \phi^\prime$ and all $x \in \ell_p$,
\begin{equation}
\label{estvpphip}
\| Df(x) \|_{\mathcal{L}(\ell_p)}\leq M_f(\| x \|_\infty), \quad
\| f(x) \|_{p}\leq M_f(\| x \|_\infty) \, \| x \|_p ,
\end{equation}
where
$M_f(d)=\sup_{[0,d]}{|f^\prime|}$ 
defines a c.n.d.f. of $d \in \mathbb{R}^+$.
These estimates yield the bounds (\ref{estgdg}), hence
$G$ and $DG$ are bounded on bounded sets in $\ell_p^2$
due to the continuous embedding $\ell_p \subset \ell_\infty$.
Moreover, (\ref{estmapg}) follows by
combining (\ref{estvpphip}) with the bounds
$$
M_{V^\prime}(r)=\calO(r^{\alpha-1} ), \quad
M_{\phi^\prime}(r)=\calO(r^{\alpha-1+\gamma} ), \quad
r\rightarrow 0^+ ,
$$
which originate from
properties (\ref{as:Hertz}), (\ref{as:w}) and (\ref{as:phi}). 
\end{proof}

In section \ref{wp}, we prove the local well-posedness in $\ell_p^2$  
of the Cauchy problem associated with (\ref{eq:1storder}),
and its global well-posedness for $1\leq p\leq 2$ 
when $V \geq 0$ and $\phi \geq 0$. 
To obtain global solutions, we
use the fact that the Hamiltonian 
\begin{equation}
\label{hamic}
{H}=\sum_{n\in \mathbb{Z}}{\frac{1}{2}\dot{x}_n^2+\frac{1}{2}x_n^2+\phi (x_n )+V(x_{n+1}-x_n)}
\end{equation}
is a conserved quantity of (\ref{eq:cradle}).

\subsection{\label{sec:linear}Periodic solutions to the linearized equation}

In this section, we consider the time-periodic solutions to the linearized dynamical equations
which constitute the basic pattern slowly modulated in section \ref{multiple}.
We present some elementary properties of these solutions, solve
the nonhomogeneous linearized equations and compute the associated
solvability conditions. This result will be used in section
\ref{multiple} to derive the amplitude equation (\ref{dps}) as a solvability condition
and obtain the expression of a higher-order corrector to approximation (\ref{approx}),
following a usual multiple-scale perturbation scheme (see e.g. \cite{sulem}, section 1.1.3).

\vspace{1ex}

Equation (\ref{eq:cradle}) linearized at $x_n=0$ reads 
\begin{equation}
\label{eq:cradlelin}
\ddot x_n + x_n = 0, \quad n\in\bbZ,
\end{equation}
or equivalently 
\begin{equation}
\label{eq:cradlelin2}
X^\prime = J\, X. 
\end{equation}
Its solutions are $2\pi$-periodic and take the form
$$
\begin{pmatrix} x_n(t) \\ \dot{x}_n(t) \end{pmatrix}
= a_n e^{it} e_1 + \cc,
$$
where c.c. denotes the complex conjugate,
$e_{\pm1} = (1/{\sqrt2})(1 , \pm i)^T$
and $a_n = (x_n(0)-i \dot{x}_n(0))/\sqrt2$.
Moreover, assuming $X(0) \in \ell_p^2$ corresponds to imposing
$(a_n )_{n \in \mathbb{Z}}\in\ell_p(\bbZ,\bbC)$. 
In a more compact form, the solution $X=(x,\dot x)^T$ to equation (\ref{eq:cradlelin2}) with
initial condition $X(0) = X^0 =(x^0, \dot{x}^0)^T$ reads
\begin{eqnarray}
X(t)=e^{Jt} X^0 & = & \begin{pmatrix}\cos t & \sin t \\-\sin t & \cos t\end{pmatrix} X^0 \\
\label{expo}
&= &(\pi_1 X^0)\, e^{it} e_1 + (\pi_{-1} X^0)\, e^{-it} e_{-1}
\end{eqnarray}
where $\pi_{\pm 1}\in \mathcal{L}(\ell_p^2 , \ell_p(\bbZ,\bbC) )$ are defined by
$$
\pi_{\pm 1} \begin{pmatrix} x \\ \dot{x} \end{pmatrix} = \frac1{\sqrt2} (x\mp i \dot{x}),
$$
so that $\pi_1 X^0=(a_n )_{n \in \mathbb{Z}}$.

\begin{Remark}
\label{remcons}
One can notice that
$x_n^2+\dot{x}_n^2$ is a conserved quantity of (\ref{eq:cradlelin})
for all $n\in \mathbb{Z}$. 
With the choice of norm (\ref{normelp}), $\|X (t)\|_p$ is conserved along evolution for solutions to (\ref{eq:cradlelin2})
and it follows that $\|e^{Jt}\|_{\mathcal{L}(\ell_p^2)}=1$ for all $t\in \mathbb{R}$.
Moreover we have $\|X (t)\|_p = \|X^0\|_p=\sqrt{2}\, \|\pi_{\pm 1} X^0\|_p$.
\end{Remark}

In what follows we consider the nonhomogeneous linearized equation
\begin{equation}
\label{eq:linear}
\dot X = J\, X + U ,
\end{equation}
where $U\, : \, \mathbb{R} \rightarrow \ell_p^2$ is $2\pi$-periodic, and
derive compatibility conditions on $U$ allowing for the existence of $2\pi$-periodic
solutions to (\ref{eq:linear}).
We denote $S^1 = \mathbb{R}/2\pi\mathbb{Z}$
the periodic interval $[0,2\pi]$ and consider the function
spaces $\bbX=\calC^0(S^1,\ell_p^2)$ and $\bbD = \calC^1(S^1,\ell_p^2)$
endowed with their usual uniform topology.

\begin{Lemma}
\label{lm:linear_per}
Let $U\in\bbX$. 
The differential equation \eqref{eq:linear} has a solution $X\in\bbD $
if and only if
\begin{equation}
\label{compatplus}
\pi_{1} \left(\int_0^{2\pi} e^{- it}U(t)\ \rmd t\right) = 0.
\end{equation}
\end{Lemma}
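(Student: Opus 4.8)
The plan is to solve \eqref{eq:linear} explicitly by variation of constants and then convert $2\pi$-periodicity into an integral condition on $U$, exploiting that the flow $e^{Jt}$ of the linearized equation \eqref{eq:cradlelin2} is the rotation displayed in \eqref{expo}, which is $2\pi$-periodic so that $e^{2\pi J}=\Id$. For the necessity, I would start from any $X\in\bbD$ solving \eqref{eq:linear} and compute $\frac{\rmd}{\rmd t}\bigl(e^{-Jt}X(t)\bigr)=e^{-Jt}\bigl(\dot X-JX\bigr)=e^{-Jt}U(t)$; integrating over $[0,2\pi]$ and using $X(2\pi)=X(0)$ together with $e^{-2\pi J}=\Id$ yields the vector condition
\begin{equation}
\label{fullcompat}
\int_0^{2\pi} e^{-Jt}\,U(t)\,\rmd t = 0 .
\end{equation}

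For the sufficiency, assuming \eqref{fullcompat}, I would set $X(t)=\int_0^t e^{J(t-s)}U(s)\,\rmd s=e^{Jt}\int_0^t e^{-Js}U(s)\,\rmd s$. This is $\calC^1$ because $U\in\bbX$ is continuous, and it solves \eqref{eq:linear}. Periodicity then follows from $X(2\pi)-X(0)=e^{2\pi J}\int_0^{2\pi}e^{-Js}U(s)\,\rmd s=0$, while the one-sided derivatives match since $\dot X(2\pi)=J\,X(2\pi)+U(2\pi)=J\,X(0)+U(0)=\dot X(0)$, using $U(2\pi)=U(0)$; hence the $2\pi$-periodic extension belongs to $\bbD$. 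This shows that a periodic solution exists if and only if \eqref{fullcompat} holds.

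It then remains to reduce the vector condition \eqref{fullcompat} to the single scalar condition \eqref{compatplus}. Decomposing the integrand as in \eqref{expo}, i.e. $e^{-Jt}Y=(\pi_1 Y)\,e^{-it}e_1+(\pi_{-1}Y)\,e^{it}e_{-1}$, and using that $\pi_{\pm1}$ are bounded (hence commute with the integral) together with the fact that $(e_1,e_{-1})$ is a basis of $\bbC^2$, condition \eqref{fullcompat} is equivalent to the pair
\[
\pi_1\Bigl(\int_0^{2\pi} e^{-it}U(t)\,\rmd t\Bigr)=0
\quad\text{and}\quad
\pi_{-1}\Bigl(\int_0^{2\pi} e^{it}U(t)\,\rmd t\Bigr)=0 .
\]

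The one genuinely delicate point, which I expect to be the crux of the argument, is why the second condition is redundant, so that \eqref{compatplus} alone suffices. This rests on the reality of $U$: from $\pi_{\pm1}(x,\dot x)^T=\tfrac1{\sqrt2}(x\mp i\dot x)$ one reads off $\pi_{-1}Y=\overline{\pi_1 Y}$ for every real $Y\in\ell_p^2$, so that $(\pi_{-1}U(t))\,e^{it}=\overline{(\pi_1 U(t))\,e^{-it}}$ and the second integral is exactly the complex conjugate of the first. Thus the $\pi_{-1}$-condition holds automatically once \eqref{compatplus} does, and the existence of a $2\pi$-periodic solution is equivalent to \eqref{compatplus}. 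Everything else is standard Duhamel bookkeeping once $e^{Jt}$ is recognized as a $2\pi$-periodic rotation; only this reality reduction from two complex constraints to one requires care.
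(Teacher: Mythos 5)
Your proof is correct and takes essentially the same route as the paper: variation of constants together with $e^{2\pi J}=\Id$ converts $2\pi$-periodicity into the vector condition $\int_0^{2\pi}e^{-Jt}U(t)\,\rmd t=0$, which is then split via the decomposition \eqref{expo} and collapsed to the single scalar condition \eqref{compatplus} using the reality identity $\pi_{-1}Y=\overline{\pi_{1}Y}$. The only cosmetic difference is that the paper handles both directions at once through the Duhamel formula with arbitrary initial data, whereas you separate necessity (differentiating $e^{-Jt}X(t)$) from sufficiency (explicit construction and periodic extension); the content is identical.
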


\begin{proof}
Given $U\in\calC^0(\mathbb{R},\ell_p^2)$, the differential equation \eqref{eq:linear}
with initial condition $X(0) = X^0\in\ell_p^2$ has 
a unique solution $X\in\calC^1(\mathbb{R},\ell_p^2)$ given by the Duhamel integral
\begin{equation}
\label{eq:Duhamel}
X(t) = e^{Jt} X^0 + \int_0^t e^{J(t-s)} U(s)\ \rmd s .
\end{equation} 
Now let us assume $U\in\bbX$. In this case,
$X$ is $2\pi$-periodic iff $X(2\pi) = X(0)$.
Since $e^{J2\pi}=\Id$, this condition is realized when
\begin{equation*}
\int_0^{2\pi} e^{-Js} U(s)\ \rmd s = 0,
\end{equation*}
which is equivalent to condition 
\begin{equation}
\label{compat}
\pi_{\pm1} \left(\int_0^{2\pi} e^{\mp it}U(t)\ \rmd t\right) = 0
\end{equation}
due to identity (\ref{expo}).
Since $U$ is real and $\overline{\pi_{1}z}=\pi_{-1}\bar{z}$, condition 
(\ref{compat}) reduces simply to (\ref{compatplus}).
\end{proof}

The above results yield the following splitting of $\bbX$.

\begin{Lemma}
\label{split}
The operator $\partial_t-J$ maps $\bbD$ to $\bbX$, 
and we have the splitting
$\bbX= \ker(\partial_t-J)\, \oplus\,  \image(\partial_t-J)$.
The corresponding projector $P$ on $\ker(\partial_t-J)$ 
along $\image(\partial_t-J)$ reads
\begin{equation*}
PX = \zeta(X) e^{it} e_1 + \bar\zeta(X) e^{-it} e_{-1}, 
\end{equation*}
where $\zeta \in \mathcal{L}(\bbX ,\ell_p(\bbZ,\bbC))$ is defined by
\begin{equation*}
\zeta(X) = \frac1{2\pi} \int_0^{2\pi} e^{-it} \pi_1 X(t)\ \rmd t.
\end{equation*}
\end{Lemma}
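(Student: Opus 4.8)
The plan is to establish the direct-sum decomposition $\bbX = \ker(\partial_t-J)\oplus\image(\partial_t-J)$ by exhibiting an explicit projector $P$ onto the kernel and checking that $\Id - P$ projects onto the range, then to identify the formula for $P$. First I would characterize the kernel: an element $X\in\bbD$ lies in $\ker(\partial_t-J)$ precisely when it solves the homogeneous equation $\dot X = JX$ and is $2\pi$-periodic, which by \eqref{expo} means $X(t) = a\, e^{it}e_1 + \bar a\, e^{-it}e_{-1}$ for some $a\in\ell_p(\bbZ,\bbC)$. Thus the kernel is the one-complex-parameter (per site) family spanned by $e^{it}e_1$ and its conjugate, and any candidate projector must reproduce such an $X$ exactly. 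Computing $\zeta(X)$ on such an element, using $\pi_1 e_1 = 1$, $\pi_1 e_{-1}=0$, and $\frac1{2\pi}\int_0^{2\pi} e^{-it}e^{it}\,\rmd t = 1$ while $\frac1{2\pi}\int_0^{2\pi}e^{-it}e^{-it}\,\rmd t = 0$, gives $\zeta(X) = a$, so indeed $PX = X$ on the kernel and $P$ is idempotent there.

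Next I would verify that $\image(\Id - P) = \image(\partial_t - J)$, which is the heart of the matter and where Lemma \ref{lm:linear_per} does the work. The key observation is that the compatibility condition \eqref{compatplus} can be rewritten in terms of $\zeta$: for $U\in\bbX$ we have
\[
\zeta(U) = \frac1{2\pi}\int_0^{2\pi} e^{-it}\pi_1 U(t)\,\rmd t = \frac1{2\pi}\,\pi_1\!\left(\int_0^{2\pi} e^{-it}U(t)\,\rmd t\right),
\]
using that $\pi_1$ is a bounded linear (real-linear) operator commuting with integration. Hence \eqref{compatplus} is exactly the statement $\zeta(U)=0$, i.e. $PU = 0$. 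By Lemma \ref{lm:linear_per}, $U\in\image(\partial_t-J)$ if and only if $PU=0$, which is to say $U\in\ker P$. Therefore $\image(\partial_t-J) = \ker P$.

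With these two facts the splitting follows from general linear algebra once $P$ is shown to be a bounded projector on $\bbX$: every $X\in\bbX$ decomposes as $X = PX + (X - PX)$ with $PX\in\ker(\partial_t-J)$ and $X-PX\in\ker P = \image(\partial_t-J)$, and the two subspaces intersect trivially since $PX=0$ for $X$ in the range while $PX=X$ for $X$ in the kernel. I would note that $\zeta\in\mathcal{L}(\bbX,\ell_p(\bbZ,\bbC))$ is bounded because $\pi_1$ is bounded and integration over $[0,2\pi]$ against the bounded factor $e^{-it}$ is bounded on $\calC^0(S^1,\ell_p^2)$, so $P$ is a well-defined bounded operator with range in $\bbD\subset\bbX$.

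The main obstacle I anticipate is not any single hard estimate but rather the bookkeeping of the real-linear versus complex-linear structure: the spaces $\bbX$, $\bbD$ are real Banach spaces while $\zeta$ takes values in a complex sequence space, and the identity $PX=X$ on the kernel relies on the interplay $\overline{\pi_1 z} = \pi_{-1}\bar z$ together with the orthogonality relations among $e^{\pm it}$. I would take care to confirm that $P$ genuinely lands in $\ker(\partial_t-J)$, i.e. that $PX$ is not merely $2\pi$-periodic in $\bbX$ but actually $\calC^1$ and annihilated by $\partial_t - J$; this is immediate from the explicit form $PX = \zeta(X)e^{it}e_1 + \overline{\zeta(X)}e^{-it}e_{-1}$, since differentiating in $t$ and applying $J$ (which sends $e_{\pm1}\mapsto \pm i\, e_{\pm1}$) shows $(\partial_t - J)(e^{\pm it}e_{\pm1}) = 0$. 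This last verification closes the argument.
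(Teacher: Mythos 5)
Your proof is correct and follows essentially the same route as the paper's: identify $\image P = \ker(\partial_t-J)$ via the representation \eqref{expo}, identify $\ker P = \image(\partial_t-J)$ by recognizing the compatibility condition \eqref{compatplus} of Lemma \ref{lm:linear_per} as the statement $\zeta(U)=0$, and deduce the splitting from the fact that a bounded projector always induces $\bbX = \image P \oplus \ker P$. The paper's proof is just a terser version of this argument; your additional verifications (boundedness of $\zeta$, idempotence on the kernel, $(\partial_t-J)(e^{\pm it}e_{\pm 1})=0$) are exactly the details the paper leaves implicit.
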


\begin{proof}
It is clear that $P\in \mathcal{L}(\bbX)$ defines a projection
and $\image P=\ker(\partial_t-J)$ by identity (\ref{expo}).
Moreover, condition (\ref{compatplus}) shows that $\ker P= \image(\partial_t-J)$,
hence $\bbX= \image P \oplus \ker P= \ker(\partial_t-J)\, \oplus\,  \image(\partial_t-J)$. 
\end{proof}

Now one can deduce the following result from expression (\ref{eq:Duhamel}) and lemma \ref{split}.

\begin{Lemma}
\label{solpart}
For all $U\in\bbX$ satisfying (\ref{compat}) (or equivalently $P\, U=0$), 
equation \eqref{eq:linear} has a unique solution in
$\bbD \, \cap\, \image(\partial_t-J)$ given by
\begin{equation}
\label{solu}
X(t) = (K\, U)(t)= (I-P)\, \int_0^t e^{J(t-s)} U(s)\ \rmd s .
\end{equation}
Moreover, the linear operator $\mathcal{K}\stackrel{\text{\tiny def}}{=}K\, (I-P)\, : \bbX \rightarrow \bbD$ is bounded.
\end{Lemma}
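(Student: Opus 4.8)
The plan is to produce the solution explicitly from the Duhamel representation \eqref{eq:Duhamel} and to strip off its component lying in $\ker(\partial_t-J)$ by means of the projector $P$ of Lemma \ref{split}. First I would introduce the particular solution $Y(t)=\int_0^t e^{J(t-s)}U(s)\,\rmd s$ of \eqref{eq:linear} with $Y(0)=0$. Because $U$ satisfies \eqref{compat}, the computation in the proof of Lemma \ref{lm:linear_per} gives $Y(2\pi)=\int_0^{2\pi}e^{-Js}U(s)\,\rmd s=0=Y(0)$; moreover $\dot Y(t)=U(t)+J\,Y(t)$, so the one-sided derivatives at the endpoints both equal $U(0)$ by periodicity of $U$, and hence $Y\in\bbD$. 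This $Y$ is already a $2\pi$-periodic $\calC^1$ solution, but in general $Y\notin\image(\partial_t-J)=\ker P$. I would therefore set $X=(I-P)\,Y$, which is precisely formula \eqref{solu}. Since $PY\in\image P=\ker(\partial_t-J)$, we have $(\partial_t-J)X=(\partial_t-J)Y=U$, so $X$ remains a solution, while $X\in\image(I-P)=\ker P=\image(\partial_t-J)$ by Lemma \ref{split}. This yields existence together with the announced formula.

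Uniqueness is immediate from the splitting: if $X_1,X_2\in\bbD\cap\image(\partial_t-J)$ both solve \eqref{eq:linear}, their difference solves the homogeneous equation and hence lies in $\ker(\partial_t-J)=\image P$, while also lying in $\image(\partial_t-J)=\ker P$; the direct sum $\bbX=\ker(\partial_t-J)\oplus\image(\partial_t-J)$ of Lemma \ref{split} then forces it to vanish.

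For the boundedness of $\mathcal{K}=K\,(I-P):\bbX\to\bbD$, I would bound the three factors $I-P$, the Duhamel map $U\mapsto Y$, and $I-P$ again. Using $\|e^{J(t-s)}\|_{\mathcal{L}(\ell_p^2)}=1$ from Remark \ref{remcons} gives $\sup_t\|Y(t)\|_p\leq 2\pi\,\|U\|_{\bbX}$, while the identity $\dot Y=U+J\,Y$ together with $\|J\|_{\mathcal{L}(\ell_p^2)}=1$ gives $\sup_t\|\dot Y(t)\|_p\leq(1+2\pi)\,\|U\|_{\bbX}$, whence $\|Y\|_{\bbD}\leq C\,\|U\|_{\bbX}$. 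For the projector, the explicit form in Lemma \ref{split} yields $\|\zeta(X)\|_p\leq\sup_t\|\pi_1 X(t)\|_p$, and since $\partial_t(PX)=i\zeta(X)e^{it}e_1-i\bar\zeta(X)e^{-it}e_{-1}$ is controlled by the same quantity, $P$ is bounded both on $\bbX$ and from $\bbX$ into $\bbD$; composing the three estimates gives $\|\mathcal{K}U\|_{\bbD}\leq C'\,\|U\|_{\bbX}$. The whole argument is essentially an assembly of Lemmas \ref{lm:linear_per} and \ref{split} with the isometry $\|e^{Jt}\|_{\mathcal{L}(\ell_p^2)}=1$, so I expect no deep obstacle; the one point needing genuine care is this $\calC^1$ bookkeeping, namely checking that the output lands in $\bbD$ rather than only in $\bbX$ and controlling its time derivative, which is exactly where the relation $\dot Y=U+J\,Y$ and the boundedness of $P$ in the $\calC^1$ topology enter.
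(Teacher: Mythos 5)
Your proof is correct and follows exactly the route the paper intends: the paper states this lemma without a written proof, remarking only that it can be deduced from the Duhamel representation (\ref{eq:Duhamel}) and lemma \ref{split}, and your argument (periodicity of the Duhamel particular solution via the compatibility condition, projection by $I-P$, uniqueness from the direct sum $\bbX=\ker(\partial_t-J)\oplus\image(\partial_t-J)$, and the three-factor bound giving boundedness of $\mathcal{K}$ from $\bbX$ into $\bbD$) is precisely that deduction carried out in full. The $\calC^1$ bookkeeping you flag, including the matching one-sided derivatives at the period endpoints and the boundedness of $P$ into $\bbD$, is handled correctly.
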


\subsection{\label{wp}Well-posedness of the nonlinear evolution problem}

The following result ensures the local well-posedness 
of the Cauchy problem for (\ref{eq:1storder}) in $\ell_p^2$,
and its global well-posedness for positive potentials when $1\leq p\leq 2$.
In addition we derive a crude lower bound on the maximal existence times
for small initial data (estimate (\ref{tdef})).

\begin{Lemma}
\label{cauchypb}
For all initial condition $X(0)=X^0 \in \ell_p^2$,
equation (\ref{eq:1storder}) admits
a unique solution $X\in \calC^2 ((t_{-},t_{+}),\ell_p^2)$,
defined on a maximal interval of existence 
$(t_{-},t_{+})$ depending {\it a priori} on $X^0$
(with $t_{-}<0<t_{+}$).
In addition, there exists $T_0>0$ such that for $\| X(0) \|_p$ small enough
\begin{equation}
\label{tdef}
t_{+} > T_0\, \| X(0) \|_p^{1-\alpha }, \quad
t_{-} <- T_0\, \| X(0) \|_p^{1-\alpha } .
\end{equation}
Moreover, for $p\in [1,2]$,
if $V \geq 0$ and $\phi \geq 0$ then
$(t_{-},t_{+})=\mathbb{R}$
and $X\in L^\infty (\mathbb{R} , \ell_2^2)$.
\end{Lemma}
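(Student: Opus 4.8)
The plan is to treat the three assertions in turn: local existence and uniqueness, the lower bound on the existence time, and finally global existence for positive potentials when $1\leq p\leq 2$.

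For the local well-posedness, I would apply the standard Picard--Lindelöf (Cauchy--Lipschitz) theorem in the Banach space $\ell_p^2$. By Lemma \ref{estglp}, the vector field $X\mapsto JX+G(X)$ is $\calC^1$, hence locally Lipschitz, on $\ell_p^2$; since $J\in\mathcal{L}(\ell_p^2)$ with $\|e^{Jt}\|=1$ (Remark \ref{remcons}), the only nontrivial part is $G$. This yields a unique maximal solution $X\in\calC^1((t_-,t_+),\ell_p^2)$, and bootstrapping through the equation $\dot X=JX+G(X)$ (whose right-hand side is $\calC^1$ in $X$, hence $\calC^1$ in $t$ along the solution) gives $X\in\calC^2$. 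The blow-up alternative from the abstract ODE theory guarantees that if $t_+<\infty$ then $\|X(t)\|_p\to\infty$ as $t\to t_+$.

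For the lower bound (\ref{tdef}), the strategy is a Gronwall-type a priori estimate exploiting the fully-nonlinear scaling $\|G(X)\|_p=\calO(\|X\|_p^\alpha)$ from (\ref{estmapg}). Writing the Duhamel formula $X(t)=e^{Jt}X^0+\int_0^t e^{J(t-s)}G(X(s))\,\rmd s$ and using $\|e^{Jt}\|_{\mathcal{L}(\ell_p^2)}=1$, I get, for small data, $\|X(t)\|_p\leq \|X^0\|_p + C\int_0^t \|X(s)\|_p^\alpha\,\rmd s$ as long as $\|X(s)\|_p$ stays small. A continuity/bootstrap argument then shows that $\|X(t)\|_p$ cannot double before a time of order $\|X^0\|_p^{1-\alpha}$: integrating the differential inequality $\frac{\rmd}{\rmd t}\|X\|_p\leq C\|X\|_p^\alpha$ (valid in the weak sense) gives a control that blows up only at a time comparable to $\|X^0\|_p^{1-\alpha}$, since $\alpha>1$ makes $\|X^0\|_p^{1-\alpha}$ large for small data. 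This furnishes $T_0>0$ with $t_+>T_0\|X^0\|_p^{1-\alpha}$, and the time-reversibility of the estimate gives the symmetric bound for $t_-$. I expect this scaling bookkeeping to be the main technical point, though it is a routine Gronwall argument.

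For the global existence when $p\in[1,2]$, $V\geq0$, $\phi\geq0$, the idea is to use the conserved Hamiltonian (\ref{hamic}). Since every summand $\frac12\dot x_n^2+\frac12 x_n^2+\phi(x_n)+V(x_{n+1}-x_n)$ is nonnegative, conservation of $H$ gives $\sum_n(x_n^2+\dot x_n^2)\leq 2H(X^0)$ uniformly in $t$, i.e. $\|X(t)\|_2$ stays bounded by $\sqrt{2H(X^0)}$ for all $t$ in the existence interval. For $p\in[1,2]$ this controls the $\ell_2$ norm; the delicate point is that the blow-up alternative is stated in $\ell_p$, so I must propagate the a priori $\ell_2$ bound to an $\ell_p$ bound. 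The cleanest route is to work directly in $\ell_2^2$: once the solution is known to exist in $\ell_p^2\subset\ell_2^2$ (using $\ell_p\subset\ell_2$ for $p\leq2$), the uniform $\ell_2^2$ bound from $H$ together with the bound (\ref{estgdg}) rules out $\ell_2$-blow-up, whence the maximal interval is all of $\mathbb{R}$ and $X\in L^\infty(\mathbb{R},\ell_2^2)$. The main obstacle here is reconciling the norm in which well-posedness was established ($\ell_p$) with the norm in which the conservation law gives a bound ($\ell_2$); I would resolve this via the embedding $\ell_p\subset\ell_2$ and a uniqueness argument showing the $\ell_p$ and $\ell_2$ solutions coincide.
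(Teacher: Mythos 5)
Your local well-posedness argument and your Gronwall/bootstrap derivation of the lower bound (\ref{tdef}) are correct and essentially identical to the paper's proof (Duhamel with $\|e^{Jt}\|_{\mathcal{L}(\ell_p^2)}=1$, comparison with the blow-up time of $\varrho'=\lambda\varrho^\alpha$, time-reversibility for negative times). The $p=2$ case of the global statement is also fine: the Hamiltonian bound $\|X(t)\|_2^2\le 2H$ together with the fact that $G$ and $DG$ are bounded on bounded sets (lemma \ref{estglp}) rules out $\ell_2$-blow-up, exactly as in the paper.

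However, for $p\in[1,2)$ there is a genuine gap: your argument only proves globality of the \emph{$\ell_2$} solution, not of the $\ell_p$ solution that the lemma is about. For $p<2$ the inequality goes the wrong way, $\|x\|_2\le\|x\|_p$, so the uniform $\ell_2$ bound coming from $H$ gives no control on $\|X(t)\|_p$. Your uniqueness argument only shows that the $\ell_p$ solution agrees with the global $\ell_2$ solution on the $\ell_p$ maximal interval $(t_-,t_+)$; it does not exclude the scenario in which $\|X(t)\|_p\to\infty$ as $t\to t_+<\infty$ while $\|X(t)\|_2$ stays bounded, i.e.\ the solution leaves $\ell_p$ in finite time but continues to exist in $\ell_2$. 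To close the argument you need an a priori bound on $\|X(t)\|_p$ over bounded time intervals, and this is where the paper uses the specific structure of estimate (\ref{estgdg}): $\|G(X)\|_p\le C_G(\|X\|_\infty)\,\|X\|_p$, with a coefficient depending only on $\|X\|_\infty$. Since $\|X(t)\|_\infty\le\|X(t)\|_2\le\sqrt{2H}$ (continuous embedding $\ell_2\subset\ell_\infty$ plus the Hamiltonian bound), Duhamel and Gronwall yield $\|X(t)\|_p\le e^{C_G(\sqrt{2H})\,t}\,\|X(0)\|_p$, so the $\ell_p$ norm grows at most exponentially and cannot blow up in finite time. This estimate, linear in $\|X\|_p$ with a constant controlled by the conserved energy, is the missing ingredient in your proposal.
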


\begin{proof}
Since $G$ is $\calC^1$ in $\ell_p^2$,
it follows that the Cauchy problem for (\ref{eq:1storder}) is
locally well-posed in $\ell_p^2$ (see e.g. \cite{zeidler}, section 4.9).
More precisely, for all initial condition $X(0)=X^0 \in \ell_p^2$,
equation (\ref{eq:1storder}) admits
a unique solution $X\in \calC^1 ((t_{-},t_{+}),\ell_p^2)$,
defined on a maximal interval of existence 
$(t_{-},t_{+})$ depending {\it a priori} on $X^0$
(with $t_{-}<0<t_{+}$). Then a bootstrap argument
yields $X\in \calC^2 ((t_{-},t_{+}),\ell_p^2)$.

Now let us prove (\ref{tdef}) by Gronwall-type estimates. 
Equation~\eqref{eq:1storder} can be expressed in Duhamel form
\begin{equation}
\label{duha}
X(t) = e^{Jt} X(0) + \int_0^t e^{J(t-s)}G(X(s))\ \rmd s,
\end{equation}
where we recall that $\|e^{Jt}\|_{\mathcal{L}(\ell_p^2)}=1$.
In addition, by lemma \ref{estglp} there exist $R,\lambda >0$
such that 
\begin{equation}
\label{eg}
\| G(X)\|_p  \leq \lambda \| X \|_p^\alpha 
\end{equation}
for all $X\in \ell_p^2$ with $\| X \|_p  \leq R$. 
Denote by $\eps$ a small parameter and fix $\| X(0) \|_{p}=\eps$.
From the above properties we deduce
$$
\| X(t) \|_{p} \leq 
\eps + \lambda\, \int_0^t{\| X(s) \|_{p}^\alpha \, \rmd s}, \quad
0 \leq t \leq t_1, 
$$
where $t_1 = \sup \{ \, t \geq 0, \, {\| X(t) \|_{p}} \leq R \, \}$. This yields the estimate
\begin{equation}
\label{threestars}
\| X(t) \|_{p} \leq \varrho_{\eps , \lambda } (t), \quad
0 \leq t \leq t_1, 
\end{equation}
where $\varrho_{\eps , \lambda }$ is the solution to the differential equation
\begin{equation}
\label{defrho1}
\varrho^\prime = \lambda\, \varrho^\alpha , \quad \varrho (0)=\eps
\end{equation}
whose explicit form is
\begin{equation}
\label{defrho2}
\varrho_{\eps , \lambda } (t)=\eps\, [\, 1-(\alpha -1)\, \lambda\, \eps^{\alpha -1} t\, ]^{\frac{1}{1-\alpha}}.
\end{equation}
We note that $\varrho_{\eps , \lambda }$ blows up at $t=\eps^{1-\alpha }\, [\lambda\, (\alpha -1)]^{-1}$.
Consequently, we fix $\theta \in (0,1)$ and introduce 
$t_\eps = T_0\, \eps^{1-\alpha }$ with $T_0 = (1-\theta)\, [ \lambda\, (\alpha -1)]^{-1}$, and hence
for all $t\in [0, t_\eps ]$ we have
\begin{equation}
\label{plus}
\varrho_{\eps , \lambda } (t) \leq \varrho_{\eps , \lambda } (t_\eps )=\eps\, \theta^{\frac{1}{1-\alpha}}.
\end{equation}
Now let us assume
$\| X(0) \|_{p}=\eps \leq \theta^{\frac{1}{\alpha -1}}R$.
By estimates (\ref{threestars}) and (\ref{plus}) we have then
\begin{equation}
\label{estimx}
\| X(t) \|_{p} \leq \theta^{\frac{1}{1-\alpha }} \eps 
\quad  \mbox{ for }| t |  \leq t_\eps ,
\end{equation}
where the estimate for $t\leq 0$ is the same as for $t\geq 0$
due to the time-reversibility of (\ref{eq:1storder}) inherited from (\ref{eq:cradle}).
Consequently, $X(t)$ is defined and $\calO(\eps)$ in $\ell_p^2$ 
at least for $t\in [-t_\eps ,t_\eps ]$, which proves (\ref{tdef}).

In addition, the existence of a global solution to (\ref{eq:1storder}) in $\ell_2^2$
can be proved when $V \geq 0$ and $\phi \geq 0$, using the fact that
the Hamiltonian (\ref{hamic})
is a conserved quantity of (\ref{eq:cradle}). Indeed, for all initial condition $X(0)=X^0 \in \ell_2^2$
and for all $t\in (t_{-},t_{+})$ we have in that case
\begin{equation}
\label{bound}
\| X(t) \|_{2}^{2} \leq 2 {H} = \| X^0 \|_{2}^{2}+
2\sum_{n\in \mathbb{Z}}\big(\phi (x_n (0))+V(x_{n+1}(0)-x_n(0))\big),
\end{equation}
and thus $X\in L^\infty (\mathbb{R} , \ell_2^2)$.
Lemma \ref{estglp} ensures 
that $G$ and $DG$ are bounded on bounded sets in $\ell_2^2$.
Together with the uniform bound (\ref{bound}) on the solution $X$, this property implies that 
$(t_{-},t_{+})=\mathbb{R}$ (see e.g. \cite{rs}, theorem X.74).

Similar arguments can be used for global well-posedness in $\ell_p^2$ with
$p\in [1,2)$, except one uses the fact that $\| X(t) \|_{p}$ is bounded
on bounded time intervals. Indeed, the following estimate follows from equation
(\ref{duha}) and lemma \ref{estglp}
$$
\| X(t) \|_{p} \leq \| X(0) \|_{p} 
+ \int_0^t{C_G(\| X(s) \|_{\infty})\, \| X(s) \|_{p} \, \rmd s}, 
$$
where $C_G$ is a c.n.d.f.
Then using the continuous embedding $\ell_2 \subset \ell_\infty$ and estimate (\ref{bound}),
we find
$$
\| X(t) \|_{p} \leq \| X(0) \|_{p} + C_G(\sqrt{2 {H}})\,\int_0^t{ \| X(s) \|_{p} \, \rmd s}, 
$$
hence by Gronwall's lemma
$$
\| X(t) \|_{p} \leq e^{C_G(\sqrt{2 {H}})\, t}\, \| X(0) \|_{p}. 
$$
This shows that $\| X(t) \|_{p}$ is bounded
on bounded time intervals, which completes the proof.
\end{proof}

\subsection{\label{multiple}Multiple-scale expansion}

In this section, we perform a multiple-scale analysis in order to obtain
approximate solutions to equation (\ref{eq:1storder}). These approximations
consist of slow time modulations of small $2\pi$-periodic solutions to
the previously analyzed
linearized equation (\ref{eq:cradlelin2}).

To determine the relevant time scales, we denote by $\eps$ a small parameter and fix $\| X(0) \|_{p}=\eps$.
As previously seen in the proof of lemma \ref{cauchypb}, the solution
$X(t)$ of (\ref{eq:1storder})
is defined and $\calO(\eps)$ in $\ell_p^2$ at least on long time scales $t\in [-t_\eps ,t_\eps ]$
with $t_\eps=T_0\, \eps^{1-\alpha }$.
Considering the Duhamel form (\ref{duha}) of (\ref{eq:1storder})
when $t \approx t_\eps$,
the integral term at the right side of (\ref{duha})
is $\calO(t_\eps \, \eps^\alpha)$, so that
both terms are $\calO(\eps)$ and contribute ``equally" to $X(t)$.

We therefore consider the slow time $\tau=\eps^{\alpha-1}t$ in addition to the fast time variable $t$,
and look for slowly modulated periodic solutions
involving these two time scales:
\begin{equation*}
X(t) = \eps Y(\tau ,t)_{| \tau=\eps^{\alpha-1}t},
\end{equation*}
$Y$ being $2\pi$-periodic in the fast variable $t$.
Injecting this Ansatz in Equation~\eqref{eq:1storder}, we obtain 
\begin{equation}
\label{eq:2scales}
(\partial_t - J) Y - \frac1\eps G(\eps Y) + \eps^{\alpha-1} \partial_\tau Y = 0.
\end{equation}
Considering the class of potentials described in section \ref{evol}, one can write
\begin{equation}
\label{defreta}
\frac1\eps W'(\eps x) = \eps^{\alpha-1} {r_\eps(x)}, \quad 
\frac1\eps \phi'(\eps x) = \eps^{\alpha-1} {\eta_\eps(x)},
\end{equation}
with $r_\eps(x)=|x|^\alpha \rho(\eps x)= \calO(\varepsilon^{\beta}\, |x|^{\alpha + \beta})$ 
and $\eta_\eps(x) = |x|^\alpha \chi(\eps x)= \calO(\varepsilon^{\gamma}\, |x|^{\alpha + \gamma})$
when $\varepsilon \rightarrow 0$.
Hence we can split the nonlinear terms of (\ref{eq:2scales}) in the following way,
\begin{equation}
\label{decompg}
\frac1\eps G(\eps Y)  \equiv  \eps^{\alpha-1} G_\alpha(Y) + \eps^{\alpha-1} R_\eps(Y),
\end{equation}
where for all $Y=(Y_1 , Y_2)^T$,
\begin{equation*}
G_\alpha(Y) = \begin{pmatrix}0\\
\delta^+V_\alpha'(\delta^- Y_1)
\end{pmatrix} , \quad
R_\eps(Y) = \begin{pmatrix}0\\
\delta^+r_\eps(\delta^- Y_1) - \eta_\eps(Y_1)
\end{pmatrix} .
\end{equation*}
Moreover, for all $Y \in \ell_p^2$ we have $\lim_{\eps \rightarrow 0}R_\eps(Y) =0$
thanks to the assumptions made on $r_\eps$ and $\eta_\eps$
(see definition (\ref{defreta}) and
properties (\ref{as:w}) and (\ref{as:phi})).
More precisely, using the second estimate of (\ref{estvpphip}), there exist $C>0$ and a
c.n.i.f. $\varepsilon_0$
such that for all $Y \in \ell_p^2$ and $\varepsilon < \varepsilon_0 (\| Y \|_\infty )$,
\begin{equation}
\label{estre}
\| R_\eps(Y) \|_{p}\leq C\, \varepsilon^{\rm{min}(\beta , \gamma )}\, \| Y \|_p^{\alpha}\,
( \| Y \|_p^{\beta }+\| Y \|_p^{\gamma }).
\end{equation}
Now let us consider $\mathcal{Y}(\tau)=Y(\tau,.)\in \bbD$,
rewrite equation (\ref{eq:2scales}) as
\begin{equation}
\label{eq:2scales2}
(\partial_t - J) \mathcal{Y}= \eps^{\alpha-1} 
[\, 
G_\alpha(\mathcal{Y}) -  \partial_\tau \mathcal{Y} +  R_\eps(\mathcal{Y})
\, ]
\end{equation}
and look for approximate solutions to (\ref{eq:2scales2}) of the form
\begin{equation}
\label{expansion}
\mathcal{Y} = \mathcal{Y}_0 + \eps^{\alpha-1} \mathcal{Y}_1 + o(\eps^{\alpha-1}),
\end{equation}
where $\mathcal{Y}_{j}(\tau )\in \bbD$, $j=0,1$.
Inserting expansion (\ref{expansion}) in equation (\ref{eq:2scales2}) yields at leading order in $\eps$
\begin{equation}
\label{ordre0}
(\partial_t - J) \mathcal{Y}_0=0,
\end{equation}
i.e. $\mathcal{Y}_0(\tau ) \in \ker(\partial_t-J)$. Consequently, the principal part of the approximate solution takes the form
\begin{equation}
\label{y0}
( \mathcal{Y}_0 (\tau))(t) = a(\tau)e^{it}e_1 + \bar a(\tau)e^{-it}e_{-1},
\end{equation}
where $a(\tau)\in\ell_p (\mathbb{Z},\mathbb{C})$.
Similarly, identification at order $\eps^{\alpha-1}$ yields
\begin{equation}
\label{o1}
(\partial_t - J) \mathcal{Y}_1=G_\alpha(\mathcal{Y}_0) -  \partial_\tau \mathcal{Y}_0.
\end{equation}
According to lemma \ref{solpart},
this nonhomogeneous equation can be solved under the compatibility condition
$P\, [\, G_\alpha(\mathcal{Y}_0) -  \partial_\tau \mathcal{Y}_0 \, ]=0$, 
i.e. $\mathcal{Y}_0$ must satisfy the amplitude equation
\begin{equation}
\label{ampli}
\partial_\tau \mathcal{Y}_0= P\, G_\alpha(\mathcal{Y}_0).
\end{equation}
Then equation (\ref{o1}) becomes
\begin{equation}
\label{o1bis}
(\partial_t - J) \mathcal{Y}_1=(I-P)\, G_\alpha(\mathcal{Y}_0),
\end{equation}
which determines $\mathcal{Y}_1 (\tau )$ as a function of $\mathcal{Y}_0 (\tau )$, up
to an element of $\ker(\partial_t-J)$.
At our level of approximation, we can arbitrarily fix 
$\mathcal{Y}_1 (\tau )\in \image(\partial_t-J)$,
which yields according to lemma \ref{solpart}
\begin{equation}
\label{o1sol}
\mathcal{Y}_1 (\tau)=\mathcal{K}\, G_\alpha(\mathcal{Y}_0(\tau)).
\end{equation}
As a conclusion, we have obtained 
an approximate solution to equation \eqref{eq:1storder},
\begin{equation}
\label{approxsolu}
X_\rmapp^\eps(t)=\eps 
[\mathcal{Y}_0(\eps^{\alpha-1}t)](t) + \eps^{\alpha} [\mathcal{Y}_1(\eps^{\alpha-1}t)](t),
\end{equation}
where $\mathcal{Y}_0$
denotes a solution to the amplitude equation (\ref{ampli}) taking the form (\ref{y0})
and the corrector $\mathcal{Y}_1$ is defined by (\ref{o1sol}).

\vspace{1ex}

In section \ref{just} we justify the above formal multiple-scale analysis
by obtaining a suitable error bound on the approximate solutions.
The overall strategy is based on the following ideas. 
In section \ref{resi}, we
check that the approximate solution (\ref{approxsolu}) solves
\eqref{eq:1storder} up to an error that remains
$\calO(\varepsilon^{\alpha + \eta})$ for $t\in [0,t_{m}]$, with
$t_m=T\, \varepsilon^{1-\alpha}$ ($T>0$ fixed) and
$\eta = \rm{min}(\alpha -1 , \beta , \gamma)>0$
(this corresponds to the orders of the terms of (\ref{eq:2scales2}) 
neglected in the above analysis). 
From this result and Gronwall's inequality, we get the error estimate
$$
\| X_\rmapp^\eps(t) - X(t) \|_{L^\infty ([0,t_{m}],\ell_p^2)} = 
\calO(t_m\, \varepsilon^{\alpha + \eta}),
$$
provided 
$\| X_\rmapp^\eps(0) - X(0) \|_{p} = 
\calO(t_m\, \varepsilon^{\alpha + \eta})$
(section \ref{errorclassic}). 
Consequently, if $t_m$ were bounded then both terms of
approximation (\ref{approxsolu}) would be relevant for $t\in [0,t_{m}]$. 
However, in our case $t_m$ diverges for $\varepsilon \rightarrow 0$, hence
we get a larger error 
$$
\| X_\rmapp^\eps(t) - X(t) \|_{L^\infty ([0,t_{m}],\ell_p^2)} = \calO(\varepsilon^{1+\eta}).
$$
As a result, only the lowest-order term of approximation (\ref{approxsolu}) is
relevant on $\calO(\varepsilon^{1-\alpha})$ times, which finally yields 
the approximate solution to \eqref{eq:1storder}
\begin{equation}
\label{approxsoluo1}
X_{a}^\eps(t)=\eps 
[\mathcal{Y}_0( \eps^{\alpha-1}t )](t)
=\eps a( \eps^{\alpha-1}t  )e^{it}e_1 + \eps \bar a( \eps^{\alpha-1}t )e^{-it}e_{-1}.
\end{equation}

Let us examine more closely the amplitude equation satisfied by
$a(\tau)\in\ell_p (\mathbb{Z},\mathbb{C})$. 
The nonlinear term of (\ref{ampli}) can be explicitly computed
following the lines of \cite{James11}; 
see the appendix for details. 
More precisely, we have 
\begin{equation}
\label{eq:ampl}
i\partial_\tau a = \delta^+ f(\delta^- a),
\end{equation}
where
$$
(f(a))_n=\omega_0 \, a_n \, |a_n|^{\alpha -1},
$$
$$
\omega_0 
= (k_- + k_+)\, 
2^{\frac{\alpha -3}{2}} \, 
\frac{
\alpha \, \Gamma{(\frac{\alpha}{2})}
}
{
\sqrt{\pi} (\alpha +1)  \Gamma{(\frac{\alpha +1}{2})}
}
$$
and $\Gamma{(x)}=\int_{0}^{+\infty}{e^{-t}\, t^{x-1}\, \rmd t}$ 
denotes Euler's Gamma function.
Equation (\ref{eq:ampl}) reads component-wise
\begin{equation}
\label{dpsa}
i  \frac{\partial a_n}{\partial \tau}=\omega_0\, (\Delta_{\alpha +1}a)_n, \ \
n\in \mathbb{Z},
\end{equation}
where the nonlinear difference operator
$$
(\Delta_{\alpha +1}a)_n=(a_{n+1}-a_n)\, |a_{n+1}-a_n |^{\alpha -1} -
(a_{n}-a_{n-1})\, |a_{n}-a_{n-1} |^{\alpha -1}
$$
is the discrete $(\alpha +1)$-Laplacian. 

\subsection{\label{qual}Qualitative properties of the amplitude equation}

In this section we establish the well-posedness of the differential equation (\ref{dpsa}),
point out some invariances and conserved quantities
yielding global existence results, and study the existence of 
spatially localized solutions which do not decay when $\tau \rightarrow +\infty$.

\subsubsection{\label{conserv}Conserved quantities and well-posedness}

Let $a\in \calC^1 ((T_{\rm{min}},T_{\rm{max}}),\ell_{\alpha +1}(\mathbb{Z},\mathbb{C}))$ denote a 
solution to (\ref{eq:ampl}) defined on some time interval $(T_{\rm{min}},T_{\rm{max}})$.
One can readily check that the quantity
$$
{\| \delta^{+} a \|}_{\alpha +1}^{\alpha +1} = \sum_{n\in \mathbb{Z}}{|a_{n+1}-a_n|^{\alpha +1}}
$$
is conserved along evolution. This property is linked with the Hamiltonian structure of equation (\ref{dpsa}),
which can be formally written
\begin{equation}
\label{hamform}
\frac{\partial a_n}{\partial \tau}=i\, \frac{\partial \mathcal{H} }{\partial \bar{a}_n}, \ \
n\in \mathbb{Z}, \quad \text{with} \quad 
\mathcal{H} =
\frac{2\omega_0}{\alpha +1}\, {\| \delta^{+} a \|}_{\alpha +1}^{\alpha +1}.
\end{equation}
More precisely, setting 
$$
a=\pi_{ 1} \begin{pmatrix} q \\ p \end{pmatrix} = \frac1{\sqrt2} (q- i p),
$$
the solutions $a\in \calC^1 ((T_{\rm{min}},T_{\rm{max}}),\ell_{2}(\mathbb{Z},\mathbb{C}))$ of (\ref{eq:ampl})
correspond to solutions $(q,p)\in \calC^1 ((T_{\rm{min}},T_{\rm{max}}),\ell_{2}^2)$ of the Hamiltonian
system 
$$
\begin{pmatrix} \dot{q} \\ \dot{p} \end{pmatrix} = J\, \nabla\mathcal{H}(q,p),
$$
where
$$
\mathcal{H}(q,p)=\frac{2\omega_0}{\alpha +1}\,  \sum_{n\in \mathbb{Z}}
{{\left[\, \left(\frac{q_{n+1}-q_n}{\sqrt{2}}\right)^2 + \left(\frac{p_{n+1}-p_n}{\sqrt{2}}\right)^2 \, \right]}^{\frac{\alpha +1}{2}}}
$$
is defined on the real Hilbert space $\ell_{2}^2$. 

Equation (\ref{dpsa}) admits the gauge invariance
$a_n \rightarrow a_n \, e^{i \varphi}$, the translational invariance $a_n \rightarrow a_n + c$ and
a scale invariance, since any
solution $(a_n)_{n\in\mathbb{Z}}$ of (\ref{dpsa}) generates a one-parameter family of solutions
$\left( \varepsilon\, a_n (|\varepsilon|^{\alpha -1}\, \tau ) 
\right)_{n\in\mathbb{Z}}$, $\varepsilon\in \mathbb{R}$. 
Several conserved quantities of (\ref{dpsa}) can be associated to these
invariances via Noether's theorem. The scale invariance and the invariance by time translation
correspond to the conservation of $\mathcal{H}$. 
The gauge invariance yields the conserved quantity
$$
\| a \|_2^2 =\sum_{n\in \mathbb{Z}}{|a_n|^2}
$$
whenever $a\in \calC^1 ((T_{\rm{min}},T_{\rm{max}}),\ell_2(\mathbb{Z},\mathbb{C}))$.
In the same way, the translational invariance yields the additional
conserved quantity
$$
\mathcal{P}=\sum_{n\in \mathbb{Z}}{a_{n}}
$$
provided $a\in \calC^1 ((T_{\rm{min}},T_{\rm{max}}),\ell_1(\mathbb{Z},\mathbb{C}))$.

\vspace{1ex}

In the sequel we use the notation $\ell_p=\ell_p (\mathbb{Z},\mathbb{C})$. 
The following lemma ensures the local well-posedness of equation
(\ref{dpsa}) in $\ell_p$ for $p\in [1,+\infty]$ and its global well-posedness 
for $p\in [1,1+\alpha]$ (in particular for $p=2$).

\begin{Lemma}
\label{lm:estim_a}
Let $a^0\in\ell_p$. 
Equation \eqref{dpsa} with initial data $a(0)=a^0$ 
admits a unique solution $a\in \calC^2 ((T_{\rm{min}},T_{\rm{max}}),\ell_p)$,
defined on a maximal interval of existence 
$(T_{\rm{min}},T_{\rm{max}})$ depending {\it a priori} on $a^0$
(with $T_{\rm{min}}<0<T_{\rm{max}}$). One has in addition
\begin{equation}
\label{tdefa}
T_{\rm{max}} \geq T_1\, \| a^0 \|_p^{1-\alpha }, \quad
T_{\rm{min}} \leq - T_1\, \| a^0 \|_p^{1-\alpha }, 
\end{equation}
with $T_1=[(\alpha -1)\, 2^{\alpha +1} \omega_0]^{-1}$.
Moreover, $(T_{\rm{min}},T_{\rm{max}})=\mathbb{R}$ if $p\in [1,\alpha +1]$.
\end{Lemma}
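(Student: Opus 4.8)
The plan is to read \eqref{dpsa} as an autonomous ODE $\dot a = \mathcal{N}(a)$ in the Banach space $\ell_p$, with $\mathcal{N}(a) = -i\,\delta^+ f(\delta^- a)$ and $(f(a))_n = \omega_0\, a_n|a_n|^{\alpha-1}$, and to follow the architecture of the proof of Lemma~\ref{cauchypb}. First I would check that $\mathcal{N}\in\calC^1(\ell_p,\ell_p)$. The scalar map $g:z\mapsto z|z|^{\alpha-1}$ is $\calC^1$ on $\bbC\simeq\bbR^2$ for $\alpha>1$, with $g(0)=0$ and $Dg(0)=0$; hence, exactly as in Lemma~\ref{estglp}, the associated superposition operator $a\mapsto(g(a_n))_n$ is $\calC^1$ on $\ell_p$ (using $\ell_p\subset\ell_\infty$ and uniform continuity of $Dg$ on compact sets), and composing with the bounded linear difference operators $\delta^\pm$, for which $\|\delta^\pm\|_{\calL(\ell_p)}\le 2$, preserves this regularity. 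The Cauchy--Lipschitz theorem in Banach spaces then gives a unique maximal $\calC^1$ solution on $(T_{\rm{min}},T_{\rm{max}})$, and a bootstrap (the right-hand side is $\calC^1$ in time once $a$ is) upgrades it to $\calC^2$. Time-reversibility of \eqref{dpsa} yields the symmetric statement for $T_{\rm{min}}$.

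For the existence-time lower bound \eqref{tdefa} I would establish the \emph{global} superlinear estimate $\|\mathcal{N}(a)\|_p\le 2^{\alpha+1}\omega_0\,\|a\|_p^\alpha$, valid for every $a\in\ell_p$. Indeed $\|f(b)\|_p=\omega_0\|b\|_{\alpha p}^\alpha\le\omega_0\|b\|_p^\alpha$ since $\alpha p\ge p$, and applying this to $b=\delta^- a$ together with $\|\delta^\pm\|_{\calL(\ell_p)}\le 2$ produces the constant $\lambda:=2^{\alpha+1}\omega_0$. Feeding this into the integral inequality $\|a(t)\|_p\le\|a^0\|_p+\lambda\int_0^t\|a(s)\|_p^\alpha\,\rmd s$ and comparing with the solution $\varrho$ of $\varrho'=\lambda\varrho^\alpha$, $\varrho(0)=\|a^0\|_p$, exactly as in \eqref{defrho1}--\eqref{defrho2}, shows $\|a(t)\|_p\le\varrho(t)$ and that $\varrho$ blows up at $T_1\|a^0\|_p^{1-\alpha}$ with $T_1=[(\alpha-1)2^{\alpha+1}\omega_0]^{-1}$. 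The blow-up alternative then forces $T_{\rm{max}}\ge T_1\|a^0\|_p^{1-\alpha}$. Note that, contrary to Lemma~\ref{cauchypb}, no cutoff radius is needed here, since the nonlinear estimate is global thanks to the exact homogeneity of $f$.

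The global existence for $p\in[1,\alpha+1]$ rests on the conserved quantity $\|\delta^+ a\|_{\alpha+1}^{\alpha+1}$ (equivalently the Hamiltonian $\mathcal{H}$): since $\ell_p\subset\ell_{\alpha+1}$ for $p\le\alpha+1$, the datum lies in $\ell_{\alpha+1}$ and $\|\delta^- a(t)\|_{\alpha+1}=\|\delta^- a^0\|_{\alpha+1}$ stays constant. It then suffices to rule out finite-time blow-up of $\|a(t)\|_p$. Writing $\|\mathcal{N}(a)\|_p\le 2\omega_0\|\delta^- a\|_{\alpha p}^\alpha$, I would split into two regimes. If $\alpha p\ge\alpha+1$, i.e. $p\ge 1+1/\alpha$, the embedding $\ell_{\alpha+1}\subset\ell_{\alpha p}$ bounds $\|\delta^- a\|_{\alpha p}$ by the conserved norm, so $\|\mathcal{N}(a(t))\|_p$ is bounded and $\|a(t)\|_p$ grows at most linearly. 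If $p<1+1/\alpha$, then $p<\alpha p<\alpha+1$, and I would interpolate $\|\delta^- a\|_{\alpha p}\le\|\delta^- a\|_p^{1-\theta}\|\delta^- a\|_{\alpha+1}^\theta$; using the conserved second factor and $\|\delta^- a\|_p\le 2\|a\|_p$ gives $\|\mathcal{N}(a)\|_p\le C\|a\|_p^{\alpha(1-\theta)}$, where a short computation of $\theta$ shows $\alpha(1-\theta)<1$ (strictly, precisely because $p>0$). Thus $\|a(t)\|_p$ obeys a sublinear integral inequality and cannot blow up in finite time. In either regime the blow-up alternative yields $(T_{\rm{min}},T_{\rm{max}})=\bbR$.

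The step I expect to be the main obstacle is exactly this last regime $p\in[1,1+1/\alpha)$: there the single conserved difference norm in $\ell_{\alpha+1}$ does not by itself control $\|\delta^- a\|_{\alpha p}$, and one must combine it by interpolation with the a priori $\ell_p$ norm of the solution to extract an estimate that is genuinely sublinear in $\|a\|_p$; verifying that the resulting growth exponent $\alpha(1-\theta)$ remains below $1$ throughout the range is the delicate point. Everything else is a routine transcription of Lemmas~\ref{estglp} and~\ref{cauchypb}.
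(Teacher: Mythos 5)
Your proposal is correct, and its first two parts follow the paper's proof of lemma \ref{lm:estim_a} essentially verbatim: $\calC^1$ smoothness of the superposition operator gives local well-posedness and the $\calC^2$ bootstrap, and the Duhamel/Gronwall comparison with $\varrho'=\lambda\varrho^\alpha$, $\lambda=2^{\alpha+1}\omega_0$ (using $\|f(b)\|_p\le\omega_0\|b\|_p^\alpha$ and $\|\delta^\pm\|_{\calL(\ell_p)}\le2$), together with the symmetry $a(\tau)\mapsto\bar a(-\tau)$, yields \eqref{tdefa}. Where you genuinely diverge is the global existence for $p\in[1,\alpha+1]$. The paper avoids your two-regime splitting with a single inequality: writing $|b_n|^\alpha=|b_n|\,|b_n|^{\alpha-1}$ gives $\|f(b)\|_p\le\omega_0\|b\|_p\|b\|_\infty^{\alpha-1}$, and then $\|\delta^- a\|_\infty\le\|\delta^- a\|_{\alpha+1}$, which is conserved; this produces a bound on the nonlinearity that is \emph{linear} in $\|a\|_p$ with constant $4\omega_0\|\delta^-a^0\|_{\alpha+1}^{\alpha-1}$, hence by Gronwall the exponential estimate (\ref{bounda}), valid uniformly for all $p\in[1,\alpha+1]$ and reused later (e.g.\ in the proof of theorem \ref{th:classicalbis}). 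Your interpolation route is also valid and in fact buys slightly sharper growth rates (linear growth of $\|a(t)\|_p$ when $p\ge1+1/\alpha$, polynomial growth from the sublinear inequality when $p<1+1/\alpha$), but at the cost of the case distinction and the exponent computation you flag as the delicate point; note that this computation is actually a one-line check, since $\theta$ solves $\frac{1}{\alpha p}=\frac{1-\theta}{p}+\frac{\theta}{\alpha+1}$, whence $\alpha(1-\theta)=\frac{\alpha+1-\alpha p}{\alpha+1-p}<1$ exactly because $(\alpha-1)p>0$. In effect, the paper's $\ell_p$--$\ell_\infty$ splitting of the nonlinearity is the endpoint version of your interpolation, and it dissolves the obstacle you anticipated.
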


\begin{proof}
Since $\alpha >1$ we have $\Delta_{\alpha +1} \in \calC^1(\ell_p,\ell_p)$
and thus the Cauchy problem for (\ref{dpsa}) is
locally well-posed in $\ell_p$.
Therefore, for all initial condition $a^0 \in \ell_p$,
equation (\ref{dpsa}) admits
a unique maximal solution $a\in \calC^1 ((T_{\rm{min}},T_{\rm{max}}),\ell_p)$,
and a bootstrap argument yields then
$a\in\calC^2((T_{\rm{min}},T_{\rm{max}}),\ell_p)$.

To prove estimates (\ref{tdefa}), we rewrite (\ref{dpsa}) in the form
\begin{equation}
\label{integdps}
i a(\tau ) = i a^0+\delta^+ \int_0^\tau{f(\delta^- a(s))\, \rmd s}.
\end{equation}
Since $\| f(a) \|_{p} \leq \omega_0 \, \| a \|_{p}^\alpha$, we get
$$
\| a(\tau ) \|_{p} \leq  \| a^0 \|_{p}
+2^{\alpha+1}\, \omega_0\, \int_0^\tau{ \| a(s) \|_{p}^\alpha \, \rmd s},
\quad
0 \leq \tau < T_{\rm{max}}.
$$
As in the proof of lemma \ref{cauchypb}, 
a Gronwall-type estimate yields then 
\begin{equation}
\label{bornea}
\| a(\tau ) \|_{p} \leq \varrho_{\eps , \lambda } (\tau ), \quad
0 \leq \tau < \eps^{1-\alpha}\, T_1 ,
\end{equation}
for the parameter choice
$\eps = \| a^0 \|_{p}$ and
$\lambda = 2^{\alpha+1}\, \omega_0$,
where $\varrho_{\eps , \lambda }(\tau )$ is the solution to
the differential equation (\ref{defrho1}) with explicit form
(\ref{defrho2}) defined up to $\tau = \eps^{1-\alpha}\,T_1$. Consequently,
bound (\ref{bornea}) yields the first estimate of (\ref{tdefa}),
and the estimate for $\tau \leq 0$ is the same as for $\tau \geq 0$
owing to the invariance $a(\tau ) \rightarrow \bar{a}(-\tau )$ of (\ref{dpsa}).

Global well-posedness in $\ell_p$ for $p\in [1,\alpha +1]$ follows from 
the fact that $\Delta_{\alpha +1}$, $D\Delta_{\alpha +1}$
are bounded on bounded sets in $\ell_p$ and $\| a(\tau) \|_{p}$ is bounded
on bounded time intervals. To prove this second property, 
we deduce from (\ref{integdps})
\begin{eqnarray*}
\| a(\tau ) \|_{p} &\leq  &\| a^0 \|_{p}
+2\, \omega_0\, \int_0^\tau{ \| \delta^- a(s) \|_p\, \| \delta^- a(s) \|_{\infty}^{\alpha -1} \, \rmd s}, \\
&\leq  &\| a^0 \|_{p}
+2\, \omega_0\, \int_0^\tau{ \| \delta^- a(s) \|_{p}\, \| \delta^- a(s) \|_{\alpha +1}^{\alpha -1} \, \rmd s},\\
&\leq  &\| a^0 \|_{p}
+4\, \omega_0\, \| \delta^- a^0 \|_{\alpha +1}^{\alpha -1} \, \int_0^\tau{ \|  a(s) \|_{p}\,  \rmd s},
\end{eqnarray*}
where we have used the fact that $\| f(a) \|_{p} \leq \omega_0 \, {\| a \|}_{p} \, {\| a \|}_{\infty}^{\alpha -1}$
and $\| \delta^- a(\tau) \|_{\alpha +1}$ is conserved.
Now we have by Gronwall's lemma
\begin{equation}
\label{bounda}
\| a(\tau ) \|_{p} \leq e^{\sigma \, \tau}\, \| a^0 \|_{p}, \quad 
\sigma = 2^{\alpha +1}\, \omega_0\, {\| a^0 \|}_{\alpha +1}^{\alpha -1},
\end{equation}
and the proof is complete.
\end{proof}

\begin{Remark}
The bound (\ref{bounda}) can be improved for $p=2$ and $p=\alpha +1$, since
$\| a(\tau) \|_2=\| a^0 \|_2$ is bounded and a sharper estimate
can be deduced from (\ref{integdps})~:
$$
{\| a(\tau ) \|}_{\alpha +1} \leq {\| a^0 \|}_{\alpha +1}
+2\omega_0\, \tau\,  {\| \delta^- a^0 \|}_{\alpha +1}^\alpha .
$$
\end{Remark}

\subsubsection{\label{spatloc}Spatially localized solutions}

Another important feature of equation (\ref{dpsa})
is the absence of scattering for square-summable solutions.
More precisely, the following result ensures that all (nontrivial)
solutions to (\ref{dpsa}) in $\ell_2$ satisfy
$\inf_{\tau \in\mathbb{R}}{\| a(\tau ) \|_{\infty}}>0$, which implies that
they do not completely disperse.
The proof is based on the conservation of $\ell_2$ norm and energy,
an idea introduced in \cite{k08} in the context of the
disordered discrete nonlinear Schr\"odinger equation.

\begin{Lemma} 
\label{minLinftynorm} 
Let $a^0 \in \ell_2$ with $a^0 \neq 0$ 
and $a\in \calC^1 (\mathbb{R},\ell_2)$ denote
the solution to \eqref{dpsa} with $a(0)=a^0$. 
Then we have
$$
\forall \tau\in\mathbb{R}, \quad 
\|a(\tau)\|_\infty \geq 
\left( 
\frac{\|\frac12 \delta^+ a^0\|_{\alpha+1}^{\alpha+1}}{\|a^0\|_2^2}
\right)^\frac{1}{\alpha-1} .
$$
\end{Lemma}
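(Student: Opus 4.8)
The plan is to exploit the two conserved quantities of equation (\ref{dpsa}) identified above, namely the $\ell_2$ norm $\|a(\tau)\|_2^2$ (coming from gauge invariance) and the energy $\|\delta^+ a(\tau)\|_{\alpha+1}^{\alpha+1}$, and to relate the latter to $\|a(\tau)\|_\infty$ through an elementary interpolation estimate. Since both quantities are frozen in time while $\|a(\tau)\|_\infty$ may vary, a pointwise lower bound on $\|a(\tau)\|_\infty$ expressed through the two conserved values will automatically hold uniformly in $\tau$.

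First I would fix $\tau\in\mathbb{R}$ and estimate the energy density from above. The idea, following \cite{k08}, is to split the exponent $\alpha+1=(\alpha-1)+2$, bounding the low power by the supremum norm while keeping the quadratic part summable. From $|a_{n+1}-a_n|\leq 2\|a(\tau)\|_\infty$ one obtains
\[
\|\delta^+ a(\tau)\|_{\alpha+1}^{\alpha+1}
=\sum_{n}|a_{n+1}-a_n|^{\alpha-1}\,|a_{n+1}-a_n|^2
\leq (2\|a(\tau)\|_\infty)^{\alpha-1}\,\|\delta^+ a(\tau)\|_2^2 .
\]
Next I would control the remaining Dirichlet-type sum by the $\ell_2$ norm via $(|a_{n+1}|+|a_n|)^2\leq 2(|a_{n+1}|^2+|a_n|^2)$, which after summation yields $\|\delta^+ a(\tau)\|_2^2\leq 4\|a(\tau)\|_2^2$. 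Combining the two estimates gives
\[
\|\delta^+ a(\tau)\|_{\alpha+1}^{\alpha+1}
\leq 2^{\alpha+1}\,\|a(\tau)\|_\infty^{\alpha-1}\,\|a(\tau)\|_2^2 .
\]

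Finally, invoking the conservation of both $\|\delta^+ a(\tau)\|_{\alpha+1}^{\alpha+1}=\|\delta^+ a^0\|_{\alpha+1}^{\alpha+1}$ and $\|a(\tau)\|_2^2=\|a^0\|_2^2$, I would rearrange to isolate $\|a(\tau)\|_\infty^{\alpha-1}$ and rewrite $2^{-(\alpha+1)}\|\delta^+ a^0\|_{\alpha+1}^{\alpha+1}=\|\tfrac12\delta^+ a^0\|_{\alpha+1}^{\alpha+1}$, obtaining the claimed inequality after taking the $(\alpha-1)$-th root. Here $\alpha>1$; moreover a nonzero sequence in $\ell_2$ cannot be constant, so $\delta^+ a^0\neq 0$ and the right-hand side is strictly positive, while $\|a^0\|_2>0$. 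The global existence in $\ell_2$ granted by lemma \ref{lm:estim_a} makes the estimate valid for every $\tau\in\mathbb{R}$.

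I do not expect a genuine obstacle: everything reduces to two one-line elementary inequalities together with the conservation laws already established. The only point requiring a little care is matching the numerical constant $2^{\alpha+1}$ so that it recombines \emph{exactly} into the factor $\tfrac12$ inside the $\ell_{\alpha+1}$ norm on the right-hand side; it is precisely the crude choices $|a_{n+1}-a_n|\leq 2\|a\|_\infty$ and $\|\delta^+ a\|_2^2\leq 4\|a\|_2^2$ that produce this constant.
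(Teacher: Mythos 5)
Your proof is correct and takes essentially the same route as the paper: both rest on the conservation of $\|a\|_2$ and $\|\delta^+ a\|_{\alpha+1}$ combined with an elementary $\ell_\infty$--$\ell_2$ interpolation (splitting $\alpha+1=(\alpha-1)+2$), and both produce the same constant $2^{\alpha+1}$. The only cosmetic difference is the order of operations: you interpolate on the difference sequence $\delta^+ a(\tau)$ and then bound $\|\delta^+ a\|_2 \leq 2\|a\|_2$, whereas the paper first uses the triangle inequality $\|\delta^+ a\|_{\alpha+1} \leq 2\|a\|_{\alpha+1}$ and then interpolates on $a(\tau)$ itself.
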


\begin{proof}
Simply use the conserved quantities $\|\delta^+ a\|_{\alpha+1}$ 
and $\|a\|_2$ from section \ref{conserv}, and estimate thanks to 
the triangle and interpolation inequalities:  
\begin{eqnarray*}
\|\delta^+ a^0\|_{\alpha+1} & = & \|\delta^+ a(\tau)\|_{\alpha+1} \\ 
& \leq & 2 \|a(\tau)\|_{\alpha+1} \\
& \leq & 2 \|a(\tau)\|_\infty^{1-\frac{2}{\alpha+1}} 
\|a(\tau)\|_2^\frac{2}{\alpha+1} 
= 2 \|a(\tau)\|_\infty^{1-\frac{2}{\alpha+1}} 
\|a^0\|_2^\frac{2}{\alpha+1} . 
\end{eqnarray*}
\end{proof}

When $a^0$ is restricted to some subset $\mathcal{C}$ of $\ell_2$,
the following result provides a simpler estimate involving the constant
\begin{equation}
\label{defi}
\mathcal{I}(\mathcal{C})=\inf
\left\{
Q_\alpha (a), \, a\in \mathcal{C}, \, a \neq 0
\right\} ,
\end{equation}
where 
\begin{equation}
\label{defq}
Q_\alpha (a)= \frac{\| \delta^+ a\|_{\alpha+1}}{\|a\|_2} .
\end{equation}

\begin{Corollary}
\label{colnd}
Keep the notations of lemma \ref{minLinftynorm} and assume $a^0 \in \mathcal{C} \subset \ell_2$.
Then we have
\begin{equation}
\label{estsimp}
\forall \tau\in\mathbb{R}, \quad 
\|a(\tau)\|_\infty \geq 
{\left(
\frac{1}{2}\, \mathcal{I}(\mathcal{C})
\right)}^\frac{\alpha +1}{\alpha-1}
\, \|a^0\|_2 .
\end{equation}
\end{Corollary}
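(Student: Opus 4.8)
The plan is to deduce the corollary directly from Lemma~\ref{minLinftynorm} by a purely algebraic simplification of its right-hand side, using the definitions of $Q_\alpha$ and $\mathcal{I}(\mathcal{C})$. No new dynamical input is needed: the conservation laws have already done their work in the lemma, and what remains is bookkeeping with the exponents.

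First I would rewrite the bound of Lemma~\ref{minLinftynorm}. Pulling the scalar $\tfrac12$ out of the norm gives $\|\tfrac12 \delta^+ a^0\|_{\alpha+1}^{\alpha+1} = 2^{-(\alpha+1)} \|\delta^+ a^0\|_{\alpha+1}^{\alpha+1}$, so the lemma reads
$$
\|a(\tau)\|_\infty \geq \left( \frac{1}{2^{\alpha+1}}\, \frac{\|\delta^+ a^0\|_{\alpha+1}^{\alpha+1}}{\|a^0\|_2^2} \right)^{\frac{1}{\alpha-1}} = \left(\frac12\right)^{\frac{\alpha+1}{\alpha-1}} \frac{\|\delta^+ a^0\|_{\alpha+1}^{\frac{\alpha+1}{\alpha-1}}}{\|a^0\|_2^{\frac{2}{\alpha-1}}}.
$$
Then I would substitute $\|\delta^+ a^0\|_{\alpha+1} = Q_\alpha(a^0)\, \|a^0\|_2$, coming from definition (\ref{defq}). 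The crucial observation is that the net power of $\|a^0\|_2$ collapses: it is $\frac{\alpha+1}{\alpha-1} - \frac{2}{\alpha-1} = 1$. Hence the right-hand side equals exactly $\bigl(\tfrac12 Q_\alpha(a^0)\bigr)^{\frac{\alpha+1}{\alpha-1}} \|a^0\|_2$.

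Finally, since $a^0 \in \mathcal{C}$ and $a^0 \neq 0$, the definition (\ref{defi}) of the infimum gives $Q_\alpha(a^0) \geq \mathcal{I}(\mathcal{C}) \geq 0$. Because $\alpha > 1$, the exponent $\frac{\alpha+1}{\alpha-1}$ is positive, so $t \mapsto t^{\frac{\alpha+1}{\alpha-1}}$ is nondecreasing on $[0,\infty)$, and monotonicity yields $\bigl(\tfrac12 Q_\alpha(a^0)\bigr)^{\frac{\alpha+1}{\alpha-1}} \geq \bigl(\tfrac12 \mathcal{I}(\mathcal{C})\bigr)^{\frac{\alpha+1}{\alpha-1}}$. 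Combining the two displays gives (\ref{estsimp}). There is no genuine obstacle here; the only point to watch is the exponent arithmetic that reduces the $\|a^0\|_2$ power to one, which is precisely what converts the scaling-covariant bound of the lemma into the clean estimate of the corollary, linear in $\|a^0\|_2$.
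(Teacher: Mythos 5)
Your proposal is correct and follows essentially the same route as the paper: the paper's proof consists precisely of invoking Lemma \ref{minLinftynorm} together with the identity $\bigl(\|\frac12 \delta^+ a^0\|_{\alpha+1}^{\alpha+1}/\|a^0\|_2^2\bigr)^{\frac{1}{\alpha-1}} = \|a^0\|_2\,\bigl(\frac12 Q_\alpha(a^0)\bigr)^{\frac{\alpha+1}{\alpha-1}}$, which is exactly the exponent bookkeeping you carry out, followed by the (implicit in the paper) monotonicity step $Q_\alpha(a^0)\geq\mathcal{I}(\mathcal{C})$. Your write-up simply makes the algebra and the monotonicity argument explicit.
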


\begin{proof}
Use lemma \ref{minLinftynorm} and the identity 
$$
\left( 
\frac{\|\frac12 \delta^+ a^0\|_{\alpha+1}^{\alpha+1}}{\|a^0\|_2^2}
\right)^\frac{1}{\alpha-1} 
=
\|a^0\|_2 \, {\left(\frac{1}{2}\, Q_\alpha (a^0) \right)}^\frac{\alpha +1}{\alpha-1}.
$$
\end{proof}

\begin{Remark}
The above result is useless for $\mathcal{C}=\ell_2$ since $\mathcal{I}(\ell_2 )=0$.
Indeed, considering the sequence $a^N = \mathbbm{1}_{\{ 1,\ldots ,N \}}$
(with $\mathbbm{1}$ denoting the indicator function)
one can check that $Q_\alpha (a^N) = 2^\frac{1}{\alpha +1} N^{-1/2} \rightarrow 0$
as $N\rightarrow +\infty$.
\end{Remark}

\begin{Remark}
\label{remi}
If $\mathcal{C}$ is a finite-dimensional linear subspace of 
$\ell_2$, then $\mathcal{I}( \mathcal{C} )>0$
($Q_\alpha$ is the ratio of two equivalent norms on $\mathcal{C}$).
Moreover, if $\mathcal{I}( \mathcal{C} )>0$ on some subspace $\mathcal{C}$
of $\ell_2$ then the norms $\| \, \|_\infty$ and $\| \, \|_2$ are equivalent on $\mathcal{C}$
(this follows from the case $\tau=0$ of (\ref{estsimp}) and the continuous embedding
$\ell_2 \subset \ell_\infty$).
\end{Remark}

Lemma \ref{minLinftynorm} and corollary \ref{colnd} show that
all square-summable localized solutions do not decay as $\tau \rightarrow \pm \infty$.
One of the simplest type of 
localized solutions to \eqref{dpsa} corresponds to
time-periodic oscillations (discrete breathers), which have been studied
in a number of works (see \cite{js12} and references therein). 
Equation (\ref{dpsa}) admits time-periodic solutions of the form
\begin{equation}
\label{solper}
a_n (\tau ) = \varepsilon\,  v_n\, e^{i\, \omega_0 \, |\varepsilon|^{\alpha -1}\, \tau},
\end{equation}
where $v=(v_n)_{n \in \mathbb{Z}}$ is a real sequence and $\varepsilon\in \mathbb{R}$
an arbitrary constant, if and only if $v$ satisfies
\begin{equation}
\label{dpsstat}
{v_n}= -(\Delta_{\alpha +1}v)_n, \ \
n\in \mathbb{Z}.
\end{equation} 
In particular, nontrivial solutions to (\ref{dpsstat}) 
satisfying $\lim_{n\rightarrow \pm\infty}{v_n}=0$
correspond to breather solutions to (\ref{dpsa})
given by (\ref{solper}). The following existence theorem
for spatially symmetric breathers has been proved in \cite{js12}
using a reformulation of (\ref{dpsstat}) as a two-dimensional mapping.

\begin{Theorem}
\label{homoclinic}
The stationary DpS equation (\ref{dpsstat}) admits
solutions $v_n^i$ ($i=1,2$) satisfying 
$$
\lim_{n\rightarrow \pm\infty}v_n^i=0,
$$ 
$$
(-1)^n\, v_{n}^i >0, \quad  
|v_n^i | > |v_{n-1}^i | \quad \text{for all } \, n\leq 0, 
$$
$$
\text{and} \quad 
v_n^1 =v_{-n}^1,  \ \ \ 
v_n^2 = -v_{-n+1}^2, 
 \ \ \  \mbox{ for all } n \in \mathbb{Z}.
$$
Furthermore, for all $q\in(0,1)$, there exists $n_0\in\mathbb{N}$ 
such that the above-mentioned solutions $v_n^i$ satisfy, 
for $i=1,2$: 
$$
\forall n \geq n_0, \quad |v_n^i| \leq q^{1+\alpha^{n-n_0}} . 
$$
\end{Theorem}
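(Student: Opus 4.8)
The plan is to reformulate the second-order recurrence (\ref{dpsstat}) as a planar map and to realize the breathers $v^i$ as symmetric homoclinic orbits of that map. Writing $\psi(x)=x\,|x|^{\alpha-1}$, which is an increasing odd homeomorphism of $\mathbb{R}$ with inverse $\psi^{-1}(y)=y\,|y|^{\frac1\alpha-1}$, equation (\ref{dpsstat}) reads $\psi(v_{n+1}-v_n)=\psi(v_n-v_{n-1})-v_n$. Hence, introducing the state $s_n=(v_{n-1},v_n)$, a sequence $v$ solves (\ref{dpsstat}) if and only if $s_{n+1}=T\,s_n$ for the homeomorphism
\begin{equation*}
T(a,b)=\bigl(b,\ b+\psi^{-1}(\psi(b-a)-b)\bigr),
\end{equation*}
which fixes the origin $O=(0,0)$. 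A breather is precisely a nonzero orbit with $s_n\to O$ as $n\to\pm\infty$, i.e. a homoclinic orbit to $O$.

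First I would record the symmetries of $T$. Since $\psi$ is odd, direct computation gives $R\,T\,R=T^{-1}$ for the involution $R(a,b)=(b,a)$, and $S\,T\,S=T$ for $S(a,b)=(-a,-b)$; consequently $SR$ and $TR$ are further reversors. The two geometrically distinct symmetry sets through $O$ are the line $\mathrm{Fix}(SR)=\{a=-b\}$ and the curve $\mathrm{Fix}(TR)=\{\psi(b-a)=-a/2\}$. An orbit meeting $\mathrm{Fix}(TR)$ is $R$-reversible of the second kind and yields a site-centered profile $v_n=v_{-n}$ (the sought $v^1$), while an orbit meeting $\mathrm{Fix}(SR)$ is $SR$-reversible of the first kind and yields a bond-centered antisymmetric profile $v_n=-v_{-n+1}$ (the sought $v^2$). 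Since $R$ sends forward-asymptotic orbits to backward-asymptotic ones, the decay as $n\to-\infty$ follows from the decay as $n\to+\infty$, so it suffices to produce one half of each orbit.

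The heart of the argument is the local analysis at the degenerate fixed point $O$. Because $\psi'(0)=0$, the map $T$ is not hyperbolic and the standard stable-manifold theorem does not apply; instead the dynamics near $O$ is super-exponential. Restricting to the invariant region where consecutive components alternate in sign and decrease in modulus, the exact relation (\ref{dpsstat}) forces for the moduli $r_n=|v_n|$ the recurrence $r_n=(r_n+r_{n-1})^\alpha+(r_n+r_{n+1})^\alpha$, whence $r_n\sim r_{n-1}^{\alpha}$ as $n\to+\infty$. This yields both the convergence $s_n\to O$ and the quantitative tail bound $|v_n|\le q^{1+\alpha^{n-n_0}}$. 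I would construct the forward-contracting branch $W^s(O)$ directly as a Lipschitz graph over this region by a contraction-mapping / graph-transform scheme adapted to the non-hyperbolic fixed point, using the super-exponential contraction in place of a spectral gap, and would simultaneously track the signs and moduli to obtain $(-1)^n v_n>0$ and $|v_n|>|v_{n-1}|$ for $n\le 0$.

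Finally I would close the homoclinic loop by a shooting argument. Parametrizing the relevant branch of $W^s(O)$ by a single amplitude and following it under backward iteration (equivalently, parametrizing $W^u(O)=R\,W^s(O)$), I would show by an intermediate-value / monotonicity argument that this curve must cross $\mathrm{Fix}(TR)$ (resp. $\mathrm{Fix}(SR)$); the crossing point generates an orbit which, being reversible and forward-asymptotic to $O$, is automatically homoclinic and carries the stated symmetry, sign and monotonicity. The main obstacle is exactly this combination: (i) building the invariant manifolds at the degenerate fixed point with the correct super-exponential tail, and (ii) proving that the continued branch genuinely meets the symmetry set, so that the crossing persists; maintaining the alternation of signs and the strict modulus-monotonicity throughout the iteration, which is what keeps the orbit inside the invariant region, is where the most care is required.
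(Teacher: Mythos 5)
A point of reference first: the paper itself does not prove this theorem. It is quoted from \cite{js12}, and the only indication given about the proof is that it ``uses a reformulation of (\ref{dpsstat}) as a two-dimensional mapping.'' Measured against that description, your setup follows the same route, and your preparatory computations are correct: the planar map $T(a,b)=(b,\,b+\psi^{-1}(\psi(b-a)-b))$ does encode (\ref{dpsstat}); $R(a,b)=(b,a)$ satisfies $RTR=T^{-1}$ and $S=-\Id$ commutes with $T$; orbits through $\mathrm{Fix}(TR)=\{\psi(b-a)=-a/2\}$ give site-centered profiles $v_n=v_{-n}$ (up to an index shift) while orbits through $\mathrm{Fix}(SR)=\{a=-b\}$ give the antisymmetric profiles $v_n=-v_{-n+1}$; and on alternating-sign sequences the moduli $r_n=|v_n|$ do satisfy $r_n=(r_n+r_{n-1})^\alpha+(r_n+r_{n+1})^\alpha$, which is consistent with the tail behaviour $r_n\sim r_{n-1}^\alpha$ and hence with the bound $|v_n|\le q^{1+\alpha^{n-n_0}}$.

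However, as a proof your text has a genuine gap: the two steps that carry all of the difficulty are announced rather than performed. (i) The existence of the forward-decaying branch (your ``$W^s(O)$'') is exactly the hard point, and it is not a routine adaptation of the graph transform. Since $\psi^{-1}$ has infinite derivative at $0$, the map $T$ is not even Lipschitz at the origin; a generic point with second coordinate $b$ near $O$ is sent to a point of size about $|b|^{1/\alpha}\gg|b|$, so generic orbits are expelled, and decaying orbits can only live in a thin cusp where the cancellation $\psi(v_n-v_{n-1})\approx v_n$ holds at every step. Your statement that ``super-exponential contraction'' can replace a spectral gap is precisely the assertion that must be proven, and no contraction estimate, no invariant region, and no fixed-point scheme is actually exhibited. (ii) The shooting step — that the branch so constructed, continued backwards, genuinely crosses $\mathrm{Fix}(TR)$ and $\mathrm{Fix}(SR)$ while keeping the sign alternation and the strict modulus monotonicity for $n\le 0$ — is likewise only asserted; without a continuation/intermediate-value argument carried out in detail, no homoclinic orbit is produced. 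So what you have is a correct and well-aimed plan, apparently the same plan as the cited reference, but both pillars on which the conclusion rests are missing, and they are the substance of the theorem rather than technical afterthoughts.
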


\begin{Remark}
These solutions are thus doubly exponentially decaying, 
so that they belong to $\ell_p$ for all $p\in[1,\infty]$. 
\end{Remark}

One may wonder if results analogous to lemma \ref{minLinftynorm}
and theorem \ref{homoclinic} hold true for the original lattice (\ref{eq:cradle}).
The proofs of the above results heavily rely on the gauge invariance of \eqref{dpsa}
which implies the conservation of $\| a(\tau ) \|_2$. Such properties are not available
for system (\ref{eq:cradle}), hence the same methodology cannot be directly applied  
to Newton's cradle. These problems will be solved in the next section through the
justification of approximation (\ref{approxsoluo1}) on long time scales.

\subsection{\label{thms}Error bounds and applications}

In this section, we give several error bounds in order to justify the
expansions of section \ref{multiple}, for small amplitude solutions and
long (but finite) time intervals. From these error bounds, we also infer 
stability results for long-lived
breather solutions to the original lattice model, 
as well as lower bounds for the amplitudes of small solutions valid over long times
(see section \ref{applithm}). 

\subsubsection{\label{thmO(eps(1-alpha))}
Asymptotics for times $\mathcal{O}(1/\eps^{\alpha-1})$}

In theorem \ref{th:classical} below, one considers any solution 
$a \in \calC^2 ([0,T],\ell_p)$
to equation \eqref{eq:ampl} and constructs a family $X_a^\eps$ of approximate solutions to \eqref{eq:1storder},
whose amplitudes are $\mathcal{O}(\eps )$ and determined by $a$ and $\eps$. These approximate solutions
are $\mathcal{O}(\eps^{1+\eta})$-close to exact solutions for 
some constant $\eta >0$ specified below and
$t\in[0,T/\eps^{\alpha-1}]$. 
The proof of theorem \ref{th:classical} is detailed in section \ref{errorclassic}.

\begin{Theorem}
\label{th:classical}
Let $\eta=\min(\alpha-1,\beta,\gamma)$ and
fix two constants $C_{\rm{i}} ,T >0$.
There exist a c.n.i.f. $\eps_T>0$ and a c.n.d.f. $C_T \geq C_{\rm{i}}$ such that
the following holds:\\
For all solution $a\in \calC^2 ([0,T],\ell_p)$ to equation \eqref{eq:ampl}
with $N\stackrel{\text{\tiny def}}{=}\|a\|_{L^\infty([0,T],\ell_p)}$ and
for all $\eps\leq\eps_T(N)$, we define
\begin{equation}
\label{ansatzthm}
X_a^\eps(t)= \frac\eps{\sqrt2} a(\eps^{\alpha-1}t)\, e^{it}\begin{pmatrix}1\\i\end{pmatrix} + \cc
\end{equation}
Then, for all
$X^0\in\ell_p^2$ satisfying 
$$
\|X^0-X_a^\eps(0)\|_p \leq C_{\rm{i}}\eps^{1+\eta}, 
$$
the solution $X(t)\in \ell_p^2$ of
equation \eqref{eq:1storder} with $X(0)=X^0$ 
is defined at least for $t\in[0,T/\eps^{\alpha-1}]$ and satisfies
\begin{equation}
\label{erroran}
\|X(t)-X_a^\eps(t)\|_p \leq C_T(N)\, \eps^{1+\eta} 
\textrm{ for all }t\in[0,T/\eps^{\alpha-1}].
\end{equation}
\end{Theorem}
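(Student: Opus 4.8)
The plan is to justify the approximation by a Gronwall argument applied to the difference between the exact solution $X$ and the approximate solution $X_a^\eps$, exploiting the structure established in the preceding sections. The key observation is that $X_a^\eps$ is the leading-order term $\eps\, [\mathcal{Y}_0(\eps^{\alpha-1}t)](t)$ from the multiple-scale expansion \eqref{approxsoluo1}, where $\mathcal{Y}_0$ solves the amplitude equation \eqref{ampli} (equivalently, $a$ solves \eqref{eq:ampl}). By construction, this $X_a^\eps$ solves \eqref{eq:1storder} only up to a residual, and the heart of the matter is to control both the size of this residual and the growth of the error over the long time interval $[0,T/\eps^{\alpha-1}]$.

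First I would estimate the residual. Plugging $X_a^\eps$ into \eqref{eq:1storder} and using the fact that $\mathcal{Y}_0$ satisfies \eqref{ordre0} and \eqref{ampli}, the leading terms cancel. What remains consists of two types of contributions: the error from retaining only $\mathcal{Y}_0$ (discarding the corrector $\mathcal{Y}_1$, which contributes at order $\eps^\alpha$, hence at order $\eps^{\alpha-1}$ relative to the leading term), and the error from the difference $G_\alpha(\mathcal{Y}_0) - \frac1\eps G(\eps\mathcal{Y}_0)$ together with the neglected terms $R_\eps$. Using the decomposition \eqref{decompg}, the homogeneity $V_\alpha'(\lambda r)=\lambda^\alpha V_\alpha'(r)$, and the bound \eqref{estre} on $R_\eps$ (controlled by $\eps^{\min(\beta,\gamma)}$), I expect the residual $\dot X_a^\eps - J X_a^\eps - G(X_a^\eps)$ to be $\calO(\eps^{\alpha+\eta})$ in $\ell_p^2$, uniformly for $t\in[0,T/\eps^{\alpha-1}]$, where $\eta=\min(\alpha-1,\beta,\gamma)$. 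Here one uses that $a\in\calC^2([0,T],\ell_p)$ guarantees $\mathcal{Y}_0$ and $\partial_\tau\mathcal{Y}_0$ are bounded in $\bbD$ in terms of $N=\|a\|_{L^\infty([0,T],\ell_p)}$, so all constants depend only on $N$.

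Next I would set up the Gronwall estimate for the error $E(t)=X(t)-X_a^\eps(t)$. Writing both $X$ and $X_a^\eps$ in Duhamel form \eqref{duha}, subtracting, and using $\|e^{Jt}\|_{\mathcal{L}(\ell_p^2)}=1$ (Remark \ref{remcons}), one obtains an integral inequality in which $E$ is driven by the residual (integrated, giving $\calO(t_m\,\eps^{\alpha+\eta})=\calO(\eps^{1+\eta})$ since $t_m=T\eps^{1-\alpha}$) plus the Lipschitz difference $G(X)-G(X_a^\eps)$. Using the local Lipschitz bound $\|DG(X)\|_{\mathcal{L}(\ell_p^2)}\leq C_D(\|X\|_\infty)$ from \eqref{estgdg}, together with the fact that $X_a^\eps$ is $\calO(\eps)$, the Lipschitz constant is $\calO(\eps^{\alpha-1})$ on the relevant ball. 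Gronwall's lemma then gives a factor $e^{C\eps^{\alpha-1}t_m}=e^{CT}$, which is bounded uniformly in $\eps$, yielding $\|E(t)\|_p\leq C_T(N)\,\eps^{1+\eta}$. The remaining point is a continuation (bootstrap) argument ensuring the solution $X$ stays in the small ball where these estimates hold, so that it exists on the full interval $[0,T/\eps^{\alpha-1}]$; this is where one chooses $\eps_T(N)$ small enough and uses the a priori existence bound \eqref{tdef} of lemma \ref{cauchypb}.

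The main obstacle, I expect, is not the Gronwall mechanics but the residual estimate in the presence of limited smoothness. Because $V$ is only $\calC^2$ and the nonlinearity is fully nonlinear with the non-smooth Hertzian principal part $V_\alpha$, one cannot freely Taylor-expand to extract high-order correctors; the delicate step is verifying that the difference $\frac1\eps G(\eps\mathcal{Y}_0)-\eps^{\alpha-1}G_\alpha(\mathcal{Y}_0)$ is genuinely $\calO(\eps^{\alpha+\eta})$ in the $\ell_p$ norm and not merely pointwise, which requires careful use of \eqref{estre} and the uniform-continuity arguments from lemma \ref{estglp}. Controlling the interplay between the scaling of the residual and the diverging time $t_m$ — so that the product lands exactly at order $\eps^{1+\eta}$ rather than something larger — is the crux, and it is precisely this that forces the dropping of the corrector $\mathcal{Y}_1$ on these time scales, as anticipated in the discussion following \eqref{approxsolu}.
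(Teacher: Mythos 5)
Your proposal contains a genuine gap, and it sits exactly at the step you describe as the heart of the matter: the residual estimate. You run the Gronwall argument directly on $Z=X-X_a^\eps$, claiming that the residual $E^\eps=\dot X_a^\eps - J X_a^\eps - G(X_a^\eps)$ of the \emph{leading-order} Ansatz is $\calO(\eps^{\alpha+\eta})$. It is not. Writing $X_a^\eps(t)=\eps Y_0(\eps^{\alpha-1}t,t)$, using $(\partial_t-J)\mathcal{Y}_0=0$, the decomposition \eqref{decompg} and the amplitude equation \eqref{ampli}, one gets
\begin{equation*}
E^\eps(t)=\eps^{\alpha}\bigl[\,\partial_\tau \mathcal{Y}_0-G_\alpha(\mathcal{Y}_0)-R_\eps(\mathcal{Y}_0)\,\bigr]\Big|_{\tau=\eps^{\alpha-1}t}
=-\eps^{\alpha}\bigl[\,(I-P)\,G_\alpha(\mathcal{Y}_0)+R_\eps(\mathcal{Y}_0)\,\bigr]\Big|_{\tau=\eps^{\alpha-1}t},
\end{equation*}
and the non-resonant part $(I-P)\,G_\alpha(\mathcal{Y}_0)$ does \emph{not} vanish: for the Hertzian nonlinearity, $G_\alpha(\mathcal{Y}_0)$ contains all the higher harmonics $e^{\pm ikt}$, $k\neq1$ (only the first harmonic, computed in the appendix, is retained by $P$), and these are of order one in terms of $N$. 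Hence the residual is only $\calO(\eps^{\alpha})$. Fed into your Gronwall inequality over $[0,T\eps^{1-\alpha}]$, this accumulates to $t_m\,\eps^{\alpha}\sim T\eps$, i.e.\ an error of the same order as the solution itself, and the bound \eqref{erroran} is lost. Your text is in fact internally inconsistent on this point: you first note that discarding $\mathcal{Y}_1$ contributes ``at order $\eps^\alpha$'' to the residual, and then assert the residual is $\calO(\eps^{\alpha+\eta})$.

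This is precisely why the paper's proof (sections \ref{resi} and \ref{errorclassic}) runs the Gronwall argument on $Z=X-X_\rmapp^\eps$ with the \emph{corrected} Ansatz $X_\rmapp^\eps=\eps\mathcal{Y}_0+\eps^{\alpha}\mathcal{Y}_1$ of \eqref{approxsolu}: the sole purpose of the corrector, through $(\partial_t-J)\mathcal{Y}_1=(I-P)\,G_\alpha(\mathcal{Y}_0)$, is to cancel this $\calO(\eps^{\alpha})$ non-resonant term, leaving a residual of size $\calO(\eps^{2\alpha-1})+\calO(\eps^{\alpha+\min(\beta,\gamma)})=\calO(\eps^{\alpha+\eta})$ (lemma \ref{lm:residual}). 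Only \emph{after} the Gronwall estimate is the corrector removed, via $X-X_a^\eps=(X-X_\rmapp^\eps)+\eps^{\alpha}Y_1$ together with $\eps^{\alpha}\leq\eps^{1+\eta}$ (since $\eta\leq\alpha-1$); one must also convert the hypothesis $\|X^0-X_a^\eps(0)\|_p\leq C_{\rm{i}}\eps^{1+\eta}$ into the corresponding bound on $X^0-X_\rmapp^\eps(0)$, which costs another $\calO(\eps^{\alpha})$. In short, you have the order of operations backwards: the corrector may be dropped from the \emph{statement} of the theorem, but not from the \emph{proof}. (An alternative repair closer to your plan would keep the leading-order Ansatz and integrate the non-resonant part of the residual by parts in the Duhamel integral, exploiting its zero mean; but the boundary terms of that integration by parts are exactly $\eps^{\alpha}Y_1$, so this reconstructs the corrector in disguise --- and your proposal contains no such step.)
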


\vspace{1ex}

\begin{Remark}
The case $\phi =0$ of equation (\ref{eq:cradle}) (harmonic on-site potential $\Phi$)
corresponds to fixing $\gamma = + \infty$, which yields
$\eta=\min(\alpha-1,\beta)$. Similarly, the case
$W=0$ (pure Hertzian-type interaction potential $V=V_\alpha$)
is obtained with $\beta = + \infty$ and $\eta=\min(\alpha-1,\gamma)$.
\end{Remark}

\vspace{1ex}

\begin{Remark}
By lemma \ref{cauchypb}, the solution $X$ of
equation \eqref{eq:1storder} satisfies 
$X= (x, \dot{x})^T \in \calC^2 ([0,T/\eps^{\alpha-1}],\ell_p^2)$,
hence $x \in \calC^3 ([0,T/\eps^{\alpha-1}],\ell_p)$. Moreover,
it follows from lemma \ref{lm:estim_a} that
$X_a^\eps = (x_a^\eps, y_a^\eps )^T \in \calC^2 ([0,T/\eps^{\alpha-1}],\ell_p^2)$,
hence $x_a^\eps \in \calC^2 ([0,T/\eps^{\alpha-1}],\ell_p)$.
Consequently, the exact solution $x$ of \eqref{eq:cradle}
is generally more regular than its approximation $x_a^\eps$, 
which is an unusual property in the context of modulation equations.
\end{Remark}

\vspace{1ex}

\begin{Example}
In the case $\alpha = 3/2$ (as for the classical Hertz force), if $W$ and $\phi$ are
$\mathcal{C}^3$ at both sides of the origin, then $\beta , \gamma \geq 1/2$
in (\ref{as:w}) and (\ref{as:phi})
and we have by theorem \ref{th:classical}
$$
\|X(t)-X_a^\eps(t)\|_p \leq C_T(N)\, \eps^{3/2} 
\textrm{ for all }t\in[0,T\, \eps^{-1/2}].
$$
\end{Example}

\vspace{1ex}

As a corollary of theorem \ref{th:classical} and previous estimates 
on the solutions to the amplitude equation \eqref{eq:ampl},
one obtains theorem \ref{th:classicalbis} below.
Roughly speaking, 
for {\em all} sufficiently small initial data $X^0 \in \ell_p$, 
this theorem provides an approximation
$X_A$ of the solution to \eqref{eq:1storder} 
(with amplitude given by a solution to \eqref{eq:ampl})
valid on $\mathcal{O}(\| X^0 \|_p^{1-\alpha})$ time scales.

\begin{Theorem}
\label{th:classicalbis}
Let $\eta=\min(\alpha-1,\beta,\gamma)$ and $C_{\rm{i}}>0$.
There exists $T_{\rm m}\in (0, +\infty ]$ such that for all $T \in (0, T_{\rm m} )$,
there exist $\tilde\eps_T>0$ and $\tilde{C}_T \geq C_{\rm{i}}$ 
such that the following properties hold.
For all $X^\ast = (x^\ast, y^\ast)^T\in \ell_p^2$ such 
that $0<\eps \stackrel{\text{\tiny def}}{=}  \| X^\ast \|_p\leq \tilde\eps_T$,
for all $X^0 \in \ell_p^2$ satisfying $\|X^0-X^\ast\|_p \leq C_{\rm{i}}\eps^{1+\eta}$,
the solution $X(t)\in \ell_p^2$ to 
equation \eqref{eq:1storder} with $X(0)=X^0$ 
is defined at least for $t\in[0,T\eps^{1-\alpha}]$.
This solution satisfies
\begin{equation}
\label{erreur}
\|X(t)-X_A(t)\|_p \leq \tilde{C}_T\eps^{1+\eta} 
\textrm{ for all }t\in[0,T\eps^{1-\alpha}],
\end{equation}
where
\begin{equation*}
X_A(t)= \frac{1}{\sqrt2} A(t)\, e^{it}\begin{pmatrix}1\\i\end{pmatrix} + \cc ,
\end{equation*}
and $A\in \calC^2 ([0,T\eps^{1-\alpha}],\ell_p)$ is the solution to equation \eqref{eq:ampl}
with $\mathcal{O}(\eps )$ initial condition 
$A (0)= (x^\ast-i \, y^\ast)/\sqrt2$.
Moreover, if $p\in [1,1+\alpha ]$ then $T_{\rm m}= +\infty $.
\end{Theorem}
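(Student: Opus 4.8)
The plan is to deduce Theorem \ref{th:classicalbis} directly from Theorem \ref{th:classical} by exploiting the scale invariance of the amplitude equation \eqref{dpsa} recalled in section \ref{conserv}. Given $X^\ast=(x^\ast,y^\ast)^T$ with $\eps=\|X^\ast\|_p>0$, I would set $a^0=\eps^{-1}\pi_1 X^\ast\in\ell_p$. By the computation behind Remark \ref{remcons} one has $\|\pi_1 X^\ast\|_p=\eps/\sqrt2$, so that $\|a^0\|_p=1/\sqrt2$ is a fixed constant, independent of both $X^\ast$ and $\eps$. Letting $a$ be the maximal solution of \eqref{eq:ampl} with $a(0)=a^0$ furnished by Lemma \ref{lm:estim_a}, I define $A(\tau)=\eps\,a(\eps^{\alpha-1}\tau)$; by scale invariance $A$ solves \eqref{eq:ampl} as well, with $A(0)=\eps a^0=\pi_1 X^\ast=(x^\ast-i\,y^\ast)/\sqrt2$, exactly as required. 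Substituting $A(\tau)=\eps\,a(\eps^{\alpha-1}\tau)$ into the definition of $X_A$ shows that $X_A$ coincides with the Ansatz $X_a^\eps$ of \eqref{ansatzthm}, and evaluating \eqref{expo} at $t=0$ (using $\overline{\pi_1 z}=\pi_{-1}\bar z$ and that $X^\ast$ is real) gives $X_a^\eps(0)=(\pi_1 X^\ast)e_1+\cc=X^\ast$.

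The crucial observation is that all admissible data $a^0$ lie on the \emph{fixed} sphere of radius $1/\sqrt2$ in $\ell_p$, so that both the existence time and the size of $a$ are controlled uniformly in $X^\ast$. By the lower bound \eqref{tdefa}, the solution $a$ exists at least on $[0,T_1\,2^{(\alpha-1)/2}]$; I therefore set $T_{\rm m}=T_1\,2^{(\alpha-1)/2}$ in general, and $T_{\rm m}=+\infty$ when $p\in[1,\alpha+1]$ by the global existence part of Lemma \ref{lm:estim_a}. Fix $T\in(0,T_{\rm m})$. In the non-global case the Gronwall bound \eqref{bornea} applied with parameter $1/\sqrt2$ yields $\|a(\tau)\|_p\le\varrho_{1/\sqrt2,\lambda}(T)=:N_0$ on $[0,T]$; in the global case one uses \eqref{bounda} instead, noting that $\|a^0\|_{\alpha+1}\le\|a^0\|_p=1/\sqrt2$ for $p\le\alpha+1$, which bounds the growth rate $\sigma$ uniformly and again produces a finite $N_0=N_0(T)$. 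In all cases $N\coloneq\|a\|_{L^\infty([0,T],\ell_p)}\le N_0$, uniformly over the choice of $X^\ast$.

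It then remains to invoke Theorem \ref{th:classical} with this $a$, the given constant $C_{\rm{i}}$, and the time $T$, which provides a c.n.i.f.\ $\eps_T$ and a c.n.d.f.\ $C_T\ge C_{\rm{i}}$. Since $N\le N_0$ and $\eps_T,C_T$ are monotone, I would set $\tilde\eps_T=\eps_T(N_0)$ and $\tilde{C}_T=C_T(N_0)\ge C_{\rm{i}}$, so that $\eps\le\tilde\eps_T$ forces $\eps\le\eps_T(N_0)\le\eps_T(N)$ and the hypotheses of Theorem \ref{th:classical} hold. Its condition $\|X^0-X_a^\eps(0)\|_p\le C_{\rm{i}}\eps^{1+\eta}$ is precisely $\|X^0-X^\ast\|_p\le C_{\rm{i}}\eps^{1+\eta}$ because $X_a^\eps(0)=X^\ast$, and its conclusion gives existence on $[0,T/\eps^{\alpha-1}]=[0,T\eps^{1-\alpha}]$ together with $\|X(t)-X_a^\eps(t)\|_p\le C_T(N)\eps^{1+\eta}\le\tilde{C}_T\eps^{1+\eta}$; using $X_a^\eps=X_A$ finishes the argument. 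The only genuinely technical point is establishing the uniform bound $N_0$ on $\|a\|_{L^\infty([0,T],\ell_p)}$ over the sphere $\|a^0\|_p=1/\sqrt2$, and in particular its persistence in the global regime $p\in[1,\alpha+1]$; this is exactly where the interpolation $\|a^0\|_{\alpha+1}\le\|a^0\|_p$ and the conserved-quantity estimates of section \ref{conserv} are needed.
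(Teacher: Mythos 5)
Your proposal is correct and takes essentially the same approach as the paper: both exploit the scale invariance of \eqref{eq:ampl} to reduce to a normalized amplitude $a$ with $\|a(0)\|_p=1/\sqrt2$, define $T_{\rm m}=T_1\,2^{(\alpha-1)/2}$ (resp. $+\infty$ for $p\in[1,\alpha+1]$), obtain a uniform bound on $\|a\|_{L^\infty([0,T],\ell_p)}$ via \eqref{bornea} or \eqref{bounda}, and then apply theorem \ref{th:classical} together with the identity $X_A=X_a^\eps$ and the monotonicity of $\eps_T$ and $C_T$. Your explicit verifications (that $X_a^\eps(0)=X^\ast$, and that $\|a^0\|_{\alpha+1}\leq\|a^0\|_p$ makes the Gronwall rate in \eqref{bounda} uniform) are details the paper leaves implicit, not a different argument.
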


\begin{proof}
By lemma \ref{lm:estim_a}, there exists 
a unique maximal solution $A$ of \eqref{eq:ampl} with initial condition $A(0)$
defined as above from $X^\ast$. 
Due to the scale invariance of \eqref{eq:ampl}, the initial condition
$a(0)=\eps^{-1}\, A(0)$ yields the solution $a(\tau )=\eps^{-1}\, A(\eps^{1-\alpha}\, \tau)$.
Since $\| a(0) \|_p = \sqrt{2}/2$ by construction, lemma \ref{lm:estim_a}
ensures that $a(\tau )$ is defined and bounded in $\ell_p$ for
$\tau \in [0,T]$ whenever $T<T_1\, 2^{(\alpha -1)/2}$. 
In addition this property is true for all $T \in (0,+\infty )$ 
when $p\in [1,1+\alpha ]$. This leads us to define $T_{\rm m}=T_1\, 2^{(\alpha -1)/2}$
for $p>1+\alpha$ and $T_{\rm m}= +\infty $ for $p\in [1,1+\alpha ]$.

Now let us consider $T < T_{\rm m}$ being fixed,
so that $A(t )$ is defined and bounded in $\ell_p$ for
$t \in [0,T\eps^{1-\alpha}]$. 
Using either bounds (\ref{bornea}) or (\ref{bounda}) 
(the latter being valid for $p\in [1,1+\alpha ]$), there exists $M_T >0$
independent of $X^\ast$ such that 
$N\stackrel{\text{\tiny def}}{=}\|a\|_{L^\infty([0,T],\ell_p)}\leq M_T$. 

With the above remarks one can apply 
theorem \ref{th:classical} for $\eps\leq\eps_T(M_T)=\tilde\eps_T$.
Noticing that $X_A(t)=X_a^\eps(t)$, 
one obtains estimate (\ref{erreur}) with $\tilde{C}_T=C_T(M_T)$,
which proves theorem \ref{th:classicalbis}. 
\end{proof}

\subsubsection{\label{thmO(eps(1-alpha)ln(1/eps))}
Asymptotics for times $\mathcal{O}(|\ln\eps|\, \eps^{1-\alpha })$}

Theorem \ref{th:nonclassical} below provides
a different kind of error estimate, where
the multiple-scale approximation is controlled on longer 
$\mathcal{O}(|\ln\eps|\, \eps^{1-\alpha })$ time scales, at the expense of
lowering the precision of (\ref{erroran}). 
These estimates are valid
when the Ansatz $X_a^\eps (t) \in \ell_p^2$ is bounded for $t\in \mathbb{R}^+$,
i.e. $X_a^\eps$ is constructed from a solution $a\in L^\infty (\bbR^+,\ell_p)$
of \eqref{eq:ampl}.
The proof of this result is detailed in section \ref{nonclassical}.

\begin{Theorem}
\label{th:nonclassical}
Let $\eta =\min(\alpha-1,\beta,\gamma)$, $\mu \in (0,\eta )$ and $C_{\rm{i}} >0$.
Fix $a\in L^\infty (\bbR^+,\ell_p)$ solution to equation \eqref{eq:ampl} 
with $\|a\|_{L^\infty(\bbR^+,\ell_p)}=N$ and consider 
\begin{equation*}
X_a^\eps(t)= \frac\eps{\sqrt2} a(\eps^{\alpha-1}t)e^{it}\begin{pmatrix}1\\i\end{pmatrix} + \cc
\end{equation*}
There exist positive constants
$\eps_0(\mu , C_{\rm{i}} ,N)$, $C_{\rm{l}}(C_{\rm{i}} ,N)$ and a c.n.i.f. $\nu (N)$
such that for all $\eps\leq\eps_0$, 
if $X^0\in\ell_p^2$ satisfies $\|X^0-X_a^\eps(0)\|_p \leq C_{\rm{i}}\, |\ln\eps|\eps^{1+\eta}$, 
then the solution $X(t)\in \ell_p^2$ to equation \eqref{eq:1storder} with $X(0)=X^0$ is defined for 
$t\in[0,\mu\, \nu \, |\ln\eps|\, \eps^{1-\alpha}]$ and satisfies
\begin{equation}
\label{estthmln}
\|X(t)-X_a^\eps(t)\|_p \leq C_{\rm{l}}\, 
|\ln\eps|\, \eps^{1+\eta - \mu}, \quad 
t\in\left[0,\mu\, \nu \, |\ln\eps|\, \eps^{1-\alpha}\right].
\end{equation}
\end{Theorem}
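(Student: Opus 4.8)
The plan is to adapt the Gronwall argument from Theorem \ref{th:classical} (whose proof we are told lives in section \ref{errorclassic}), but to track the logarithmic growth of the Gronwall bound more carefully so as to reach the longer time scale $t_\ell = \mu\,\nu\,|\ln\eps|\,\eps^{1-\alpha}$. First I would set $E(t) = \|X(t) - X_a^\eps(t)\|_p$ and write the Duhamel form (\ref{duha}) for both $X$ and the approximate solution. Since $X_a^\eps$ is constructed from the leading term $\eps[\mathcal{Y}_0](t)$ alone, it solves (\ref{eq:1storder}) only up to a residual; I would estimate this residual as $\calO(\eps^{\alpha+\eta})$ on the relevant time interval exactly as the referenced section \ref{resi} does, using the splitting (\ref{decompg}) and the bound (\ref{estre}) on $R_\eps$ together with the fact that the corrector $\mathcal{Y}_1$ contributes at order $\eps^\alpha$. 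The key difference from the classical case is that $a\in L^\infty(\bbR^+,\ell_p)$, so $N=\|a\|_{L^\infty(\bbR^+,\ell_p)}$ controls $\|X_a^\eps(t)\|_\infty$ uniformly in $t$, and the residual bound holds for all $t\geq 0$, not merely up to $t_m$.

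Next I would close a differential inequality. Using $\|e^{Jt}\|_{\mathcal{L}(\ell_p^2)}=1$ (Remark \ref{remcons}) and the local Lipschitz bound (\ref{estgdg}) on $G$ through $DG$, the difference $G(X) - G(X_a^\eps)$ is controlled by $C_D(\max(\|X\|_\infty,\|X_a^\eps\|_\infty))\,E(t)$. As long as $E(t)$ stays small compared to $\eps$, we have $\|X(t)\|_\infty \leq \|X_a^\eps(t)\|_\infty + E(t) = \calO(\eps)$, so the fully-nonlinear estimate (\ref{estmapg}) gives a Lipschitz constant of size $\calO(\eps^{\alpha-1})$. This yields a Gronwall inequality of the schematic form $E(t) \leq C\,\eps^{\alpha+\eta}\,t + L\,\eps^{\alpha-1}\int_0^t E(s)\,\rmd s$ with $L$ depending only on $N$, whose integrated form is $E(t) \leq (C/L)\,\eps^{1+\eta}\,(e^{L\eps^{\alpha-1}t}-1)$. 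Evaluating at $t = t_\ell = \mu\,\nu\,|\ln\eps|\,\eps^{1-\alpha}$ makes the exponent $L\,\eps^{\alpha-1}t_\ell = L\,\mu\,\nu\,|\ln\eps|$, so choosing the c.n.i.f. $\nu(N) = 1/L$ (up to constants) produces $e^{L\eps^{\alpha-1}t_\ell} = \eps^{-\mu}$. Hence $E(t_\ell) = \calO(\eps^{1+\eta}\cdot\eps^{-\mu}) = \calO(\eps^{1+\eta-\mu})$, and reinstating the factor $|\ln\eps|$ coming from $t_\ell$ in front of the affine term gives precisely the bound $C_{\rm l}\,|\ln\eps|\,\eps^{1+\eta-\mu}$ of (\ref{estthmln}).

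The main obstacle is the bootstrap consistency: the Lipschitz constant $\calO(\eps^{\alpha-1})$ is only valid while $\|X(t)\|_\infty$ remains $\calO(\eps)$, which in turn requires $E(t) \leq C'\eps$ throughout $[0,t_\ell]$. So I would run a continuity/maximal-time argument, letting $t^\ast$ be the supremum of times for which $E(t)\leq\eps$ (say), derive the estimate (\ref{estthmln}) on $[0,t^\ast]$, and then check that the right-hand side $C_{\rm l}\,|\ln\eps|\,\eps^{1+\eta-\mu}$ is strictly smaller than $\eps$ for $\eps$ small enough. Since $1+\eta-\mu > 1$ (because $\mu < \eta$), the factor $|\ln\eps|\,\eps^{\eta-\mu}\to 0$, so the bootstrap threshold is not saturated and $t^\ast \geq t_\ell$. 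This step fixes how small $\eps_0(\mu,C_{\rm i},N)$ must be taken, and it is where the constraint $\mu \in (0,\eta)$ is genuinely used. The treatment of the initial error $\|X^0-X_a^\eps(0)\|_p \leq C_{\rm i}|\ln\eps|\eps^{1+\eta}$ is routine: it simply adds an $e^{L\eps^{\alpha-1}t}\cdot C_{\rm i}|\ln\eps|\eps^{1+\eta}$ term to the Gronwall estimate, which is absorbed into the same final bound after choosing $C_{\rm l}$ depending on $C_{\rm i}$ and $N$.
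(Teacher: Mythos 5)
Your Gronwall/bootstrap skeleton matches the paper's proof in section \ref{nonclassical}: uniformity of all constants in $T$ because $a\in L^\infty(\bbR^+,\ell_p)$, the choice $T\sim \mu\,\nu\,|\ln\eps|$ with $\nu$ the reciprocal of the Lipschitz constant so that $e^{C_{\rm{L}}T}=\eps^{-\mu}$, a continuity argument at threshold $\eps$, and the condition $\mu<\eta$ to close the bootstrap. However, there is a genuine gap at the very first step: the residual of the \emph{leading-order} Ansatz $X_a^\eps=\eps\,\mathcal{Y}_0$ is \emph{not} $\calO(\eps^{\alpha+\eta})$. Computing it with the splitting (\ref{decompg}) and the amplitude equation (\ref{ampli}), one finds
$$
\dot X_a^\eps - J\, X_a^\eps - G(X_a^\eps) \;=\; -\eps^{\alpha}\,(I-P)\,G_\alpha(\mathcal{Y}_0) \;-\; \eps^{\alpha} R_\eps(\mathcal{Y}_0),
$$
and the non-resonant part $(I-P)G_\alpha(\mathcal{Y}_0)$ (the higher harmonics generated by $V_\alpha'$) is $\calO(1)$, so this residual is only $\calO(\eps^{\alpha})$. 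Lemma \ref{lm:residual} of section \ref{resi}, which you invoke, estimates the residual of the \emph{corrected} Ansatz $X_\rmapp^\eps=\eps\,\mathcal{Y}_0+\eps^{\alpha}\mathcal{Y}_1$ of (\ref{approxsolu}); it is precisely the corrector, which solves $(\partial_t-J)\mathcal{Y}_1=(I-P)G_\alpha(\mathcal{Y}_0)$ in (\ref{o1bis}), that cancels the $\calO(\eps^\alpha)$ term and leaves $\calO(\eps^{\alpha+\eta})$. With the true $\calO(\eps^{\alpha})$ forcing, your Gronwall inequality gives at $t_\ell\sim\mu\,\nu\,|\ln\eps|\,\eps^{1-\alpha}$ only $E(t_\ell)=\calO(|\ln\eps|\,\eps^{1-\mu})$, which is both far weaker than (\ref{estthmln}) and larger than the bootstrap threshold $\eps$, so the continuity argument does not close.

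The repair is exactly the paper's two-step structure: run the Gronwall estimate on $Z=X-X_\rmapp^\eps$ (corrector included), for which the residual bound of lemma \ref{lm:residual} holds uniformly on $\bbR^+$, obtain $\|Z(t)\|_p\lesssim |\ln\eps|\,\eps^{1+\eta-\mu}$ on $[0,T\eps^{1-\alpha}]$, and only then pass to $X_a^\eps$ through the identity $X-X_a^\eps=Z+\eps^{\alpha}Y_1$ together with the uniform corrector bound (\ref{y1epsunif}); since $\eta\leq\alpha-1$, removing the corrector costs only $\calO(\eps^{\alpha})\leq\calO(\eps^{1+\eta})$, which is absorbed into $C_{\rm{l}}$. (The same identity at $t=0$ is needed to convert your hypothesis on $\|X^0-X_a^\eps(0)\|_p$ into the hypothesis on $\|X^0-X_\rmapp^\eps(0)\|_p$ required by the Gronwall step.) Alternatively, one could avoid the corrector by an integration by parts in the Duhamel integral, exploiting that $(I-P)G_\alpha(\mathcal{Y}_0)$ is oscillatory with no resonant component; but as written your proposal does neither, and simply asserts for $X_a^\eps$ a residual bound that only holds for $X_\rmapp^\eps$.
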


In the same way as
theorem \ref{th:classicalbis} was deduced from theorem \ref{th:classical}, theorem \ref{th:nonclassicalbis} below 
follows directly from theorem \ref{th:nonclassical}.
The proof requires all solutions to \eqref{eq:ampl} to be
global and bounded in $\ell_p$ (due to the same assumption made on $a$
in theorem \ref{th:nonclassical}), hence we have to restrict to $p=2$.

\begin{Theorem}
\label{th:nonclassicalbis}
Let $\eta =\min(\alpha-1,\beta,\gamma)$, $\mu \in (0,\eta )$ and $C_{\rm{i}} >0$.
There exist positive constants
$\eps_0(\mu , C_{\rm{i}} )$, $C_{\rm{l}}(C_{\rm{i}} )$ and $\nu$
such that the following holds.
For any $X^\ast = (x^\ast, y^\ast)^T\in \ell_2^2$ such 
that $\eps \stackrel{\text{\tiny def}}{=}  \| X^\ast \|_2\leq \eps_0$,
we consider the solution $A\in \calC^2 (\mathbb{R},\ell_2)$ to equation \eqref{eq:ampl}
with $\mathcal{O}(\eps )$ initial condition 
$A (0)= (x^\ast-i \, y^\ast)/\sqrt2$,
and we define 
\begin{equation*}
X_A(t)= \frac{1}{\sqrt2} A(t)\, e^{it}\begin{pmatrix}1\\i\end{pmatrix} + \cc 
\end{equation*}
Then, for all $X^0 \in \ell_2^2$ satisfying $\|X^0-X^\ast\|_2 \leq C_{\rm{i}}\, |\ln\eps|\eps^{1+\eta}$,
the solution $X(t)\in \ell_p^2$ to equation \eqref{eq:1storder} with $X(0)=X^0$ is defined for 
$t\in[0,\mu\, \nu \, |\ln\eps|\, \eps^{1-\alpha}]$ and satisfies
\begin{equation}
\label{erreurln}
\|X(t)-X_A(t)\|_2 \leq C_{\rm{l}}\, 
|\ln\eps|\, \eps^{1+\eta - \mu}, \quad 
t\in\left[0,\mu\, \nu \, |\ln\eps|\, \eps^{1-\alpha}\right] .
\end{equation}
\end{Theorem}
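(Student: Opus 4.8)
The plan is to deduce Theorem~\ref{th:nonclassicalbis} from Theorem~\ref{th:nonclassical} by exactly the rescaling argument already used to pass from Theorem~\ref{th:classical} to Theorem~\ref{th:classicalbis}. The restriction to $p=2$ is what makes this possible: it is precisely the case in which every solution of the amplitude equation is global and bounded, so that the hypotheses of Theorem~\ref{th:nonclassical} are automatically met, and met with a bound that is \emph{uniform} over all admissible data.

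First I would set $A(0)=(x^\ast-i\,y^\ast)/\sqrt2$ and invoke Lemma~\ref{lm:estim_a}, which (since $2\in[1,\alpha+1]$) produces a unique global solution $A\in\calC^2(\bbR,\ell_2)$. I would then exploit the scale invariance of \eqref{eq:ampl} and introduce $a(\tau)=\eps^{-1}A(\eps^{1-\alpha}\tau)$, which solves \eqref{eq:ampl} with $a(0)=\eps^{-1}A(0)$. A direct computation gives $\|A(0)\|_2=\eps/\sqrt2$, hence $\|a(0)\|_2=\sqrt2/2$; since $\|a(\tau)\|_2$ is conserved for $p=2$ (section~\ref{conserv}), the solution $a$ is global and bounded with $N\coloneq\|a\|_{L^\infty(\bbR^+,\ell_2)}=\sqrt2/2$. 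The crucial point is that this value of $N$ is a \emph{universal} constant, independent of $X^\ast$; consequently the quantities $\eps_0(\mu,C_{\rm{i}},N)$, $C_{\rm{l}}(C_{\rm{i}},N)$ and $\nu(N)$ supplied by Theorem~\ref{th:nonclassical} depend only on $\mu$ and $C_{\rm{i}}$, exactly as the statement demands.

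Next I would verify the two identities that make this $a$ the correct object to feed into Theorem~\ref{th:nonclassical}. Using $a(\eps^{\alpha-1}t)=\eps^{-1}A(t)$, the associated Ansatz satisfies $X_a^\eps(t)=X_A(t)$ for all $t$; and evaluating at $t=0$ one finds $X_a^\eps(0)=X^\ast$ (the factors $\eps$ and $\eps^{-1}$ cancel, and the $(1,i)^T$ term together with its conjugate reconstruct $(x^\ast,y^\ast)^T$). In particular the hypothesis $\|X^0-X^\ast\|_2\le C_{\rm{i}}\,|\ln\eps|\,\eps^{1+\eta}$ of Theorem~\ref{th:nonclassicalbis} is verbatim the hypothesis $\|X^0-X_a^\eps(0)\|_2\le C_{\rm{i}}\,|\ln\eps|\,\eps^{1+\eta}$ required in Theorem~\ref{th:nonclassical}.

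With these identifications in place, applying Theorem~\ref{th:nonclassical} to the solution $a$ for $\eps\le\eps_0$ yields the solution $X$ on $[0,\mu\,\nu\,|\ln\eps|\,\eps^{1-\alpha}]$ together with the bound \eqref{estthmln}, which is \eqref{erreurln} once $X_a^\eps$ is rewritten as $X_A$. I do not expect a genuine obstacle here beyond bookkeeping: the single substantive ingredient is the conservation of the $\ell_2$ norm, which simultaneously guarantees global existence and pins the uniform value $N=\sqrt2/2$, thereby stripping the error constants of any dependence on the particular data $X^\ast$. All the analytic effort has already been absorbed into Theorem~\ref{th:nonclassical}, so the present argument is purely a specialization-and-rescaling step.
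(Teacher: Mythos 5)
Your proposal is correct and is essentially identical to the paper's own proof: the paper likewise proceeds as in Theorem \ref{th:classicalbis} with $p=2$, uses global existence and $\ell_2$-norm conservation to get the uniform value $N=\|a(0)\|_2=\sqrt2/2$ (hence constants independent of $X^\ast$), and then applies Theorem \ref{th:nonclassical} after identifying $X_a^\eps = X_A$ and $X_a^\eps(0)=X^\ast$.
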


\begin{proof}
We proceed exactly as in the proof of theorem \ref{th:classicalbis} for $p=2$, except the solutions
$A,a$ of \eqref{eq:ampl} are now global in time, 
$N\stackrel{\text{\tiny def}}{=}\|a\|_{L^\infty(\mathbb{R}^+,\ell_2)}=\| a(0) \|_2 = \sqrt{2}/2 $,
and we use theorem \ref{th:nonclassical} instead of theorem \ref{th:classical}.
\end{proof}

\subsubsection{\label{applithm}Long-lived localized solutions}

We can apply theorem \ref{homoclinic} to generate breather 
solutions to the amplitude DpS equation \eqref{dpsa} which provide 
approximate solutions for theorems \ref{th:classical}, \ref{th:classicalbis}, 
\ref{th:nonclassical} and \ref{th:nonclassicalbis}. Hence, we obtain stable
exact solutions to the original nonlinear lattice
\eqref{eq:cradle}, close to breathers, over the corresponding time scales. 
 
\begin{Theorem}
Let $\eta=\min(\alpha-1,\beta,\gamma)$ and fix two constants $C_{\rm{i}},T>0$.
Consider a solution $v^i=(v_n^i)_{n\in \mathbb{Z}}$ ($i=1,2$) of 
the stationary DpS equation (\ref{dpsstat}) described in theorem
\ref{homoclinic}. There exist $\eps_T , C_T >0$ such that for all
$\eps \in (0, \eps_T ]$, for all $X^0 \in \ell_p^2$ satisfying 
\begin{equation}
\label{condic}
\|X^0- (\sqrt{2} \eps v^i ,0)^T \|_p \leq C_{\rm{i}}\eps^{1+\eta},
\end{equation}
the solution $X(t)\in \ell_p^2$ to 
equation \eqref{eq:1storder} with $X(0)=X^0$ 
is defined at least for $t\in[0,T\eps^{1-\alpha}]$ and satisfies
\begin{equation}
\label{erreurb}
\|X(t)-X^\eps_{\rm{b}}(t)\|_p \leq {C}_T\eps^{1+\eta} 
\textrm{ for all }t\in[0,T\eps^{1-\alpha}],
\end{equation}
where
$$
X^\eps_{\rm{b}}(t) = \sqrt{2} \, \eps \, ( v^i\, \cos{(\Omega\, t)},- v^i\, \sin{(\Omega\, t)})^T,
\ \ \
\Omega = 1+\omega_0\, \eps^{\alpha -1}.
$$
\end{Theorem}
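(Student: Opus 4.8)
The plan is to recognize that this statement is a direct specialization of Theorem \ref{th:classical} to the amplitude coming from the time-periodic breather of the DpS equation. First I would take $a(\tau)=v^i\,e^{i\omega_0\tau}$, which is exactly the solution (\ref{solper}) with unit amplitude ($\eps=1$); by the equivalence stated just before (\ref{dpsstat}), this $a$ solves the amplitude equation (\ref{eq:ampl}) (equivalently (\ref{dpsa})) precisely because $v^i$ solves the stationary equation (\ref{dpsstat}), which is guaranteed by Theorem \ref{homoclinic}. Since $v^i\in\ell_p$ for every $p$ (it decays doubly exponentially) and $\tau\mapsto e^{i\omega_0\tau}$ is smooth, the map $a$ belongs to $\calC^2([0,T],\ell_p)$, and $N\coloneq\|a\|_{L^\infty([0,T],\ell_p)}=\|v^i\|_p$ is a fixed finite constant. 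This lets me invoke Theorem \ref{th:classical} with this choice of $a$, producing the constants $\eps_T=\eps_T(N)$ and $C_T=C_T(N)$ appearing in the statement.

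The second step is purely computational: I would substitute $a(\tau)=v^i e^{i\omega_0\tau}$ into the ansatz (\ref{ansatzthm}) and simplify. Writing $\Omega=1+\omega_0\eps^{\alpha-1}$, the slow and fast exponentials combine as $e^{i\omega_0\eps^{\alpha-1}t}e^{it}=e^{i\Omega t}$, and pairing the vector $(1,i)^T$ against $e^{i\Omega t}$ and adding the complex conjugate (recall $v^i$ is real) yields exactly $X_a^\eps(t)=\sqrt{2}\,\eps\,(v^i\cos(\Omega t),-v^i\sin(\Omega t))^T=X^\eps_{\rmb}(t)$. Evaluating at $t=0$ gives $X_a^\eps(0)=(\sqrt{2}\,\eps\,v^i,0)^T$, so the hypothesis $\|X^0-X_a^\eps(0)\|_p\le C_{\rm i}\eps^{1+\eta}$ of Theorem \ref{th:classical} coincides verbatim with condition (\ref{condic}).

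Finally I would conclude by reading off the error bound (\ref{erroran}) of Theorem \ref{th:classical}, which, after the identification $X_a^\eps=X^\eps_{\rmb}$, is precisely the desired estimate (\ref{erreurb}) on $[0,T\eps^{1-\alpha}]$. There is no genuine obstacle here: all the analytic difficulty — local well-posedness, the Gronwall control of the residual, and the passage to $\mathcal{O}(\eps^{1-\alpha})$ time scales — is already packaged inside Theorem \ref{th:classical}. The only point needing a little care is the explicit algebra identifying $X_a^\eps$ with the cosine/sine breather profile $X^\eps_{\rmb}$ and checking that the gauge phase $\omega_0\eps^{\alpha-1}t$ correctly shifts the carrier frequency from $1$ to $\Omega$; the \emph{stability} advertised in the introduction is then nothing more than the content of this error bound, namely that data starting $O(\eps^{1+\eta})$-close to the breather profile remain $O(\eps^{1+\eta})$-close over the entire interval.
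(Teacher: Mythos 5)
Your proposal is correct and is exactly the paper's own proof: the paper simply says to take the breather solution (\ref{solper}) with $v=v^i$, $\varepsilon=1$ (i.e.\ $a(\tau)=v^i e^{i\omega_0\tau}$) and apply theorem \ref{th:classical}. Your write-up merely fills in the details the paper leaves implicit (membership of $a$ in $\calC^2([0,T],\ell_p)$, the algebra identifying $X_a^\eps$ with $X^\eps_{\rm b}$, and the matching of initial conditions), all of which check out.
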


\begin{proof}
Consider the breather solution of 
(\ref{dpsa}) given by (\ref{solper}) with $v=v^i$, $\eps=1$,
and apply theorem \ref{th:classical}.
\end{proof}

As a result of estimate (\ref{erreurb}), the initial condition
$X(0)=(\sqrt{2} \eps v^i ,0)^T$ generates long-lived breather solutions
$\tilde{X}^\eps_{\rm{b}}$ defined for $t\in [0, T\, \eps^{1-\alpha}]$
and taking the form $\tilde{X}^\eps_{\rm{b}}(t)={X}^\eps_{\rm{b}}(t)+\mathcal{O}(\eps^{1+\eta})$.
These solutions are stable in $\ell^2_p$ on the corresponding time scale
since condition (\ref{condic}) implies
$$
\|X(t)-\tilde{X}^\eps_{\rm{b}}(t)\|_p \leq 2\, {C}_T\eps^{1+\eta} 
\textrm{ for all }t\in[0,T\eps^{1-\alpha}]
$$
(this follows by using (\ref{erreurb}) and the triangle inequality).

\vspace{1ex}

Using theorem \ref{th:nonclassicalbis} and corollary \ref{colnd}, we also
obtain lower bounds for the amplitudes of small localized solutions over long times.
This result is valid for all initial data in subsets $\mathcal{C}$ of $\ell_2^2$
such that $\mathcal{I}(\mathcal{C})>0$, where $\mathcal{I}(\mathcal{C})$
is defined as in (\ref{defi})-(\ref{defq}) with the choice of norms (\ref{normelp})
(for which the canonical isomorphism between 
${(\ell_p(\mathbb{Z},\mathbb{R}) )}^2$ and $\ell_p(\mathbb{Z},\mathbb{C})$
is an isometry).
As already noticed in remark \ref{remi}, one has $\mathcal{I}(\mathcal{C})>0$ whenever $\mathcal{C}$
is a finite-dimensional linear subspace of $\ell_2^2$.

\begin{Proposition} \label{prop:nodispers}
Keep the notations of theorem \ref{th:nonclassicalbis}.
Let $\mathcal{C}$ denote a subset of $\ell_2^2$ such that
$\mathcal{I}(\mathcal{C})>0$.  
There exists $\eps_1(\mu,C_{\rm{i}},\mathcal{I}(\mathcal{C}))>0$ such that
for all $\eps\in(0,\eps_1]$ and $X^0 \in \mathcal{C}$ 
with $\| X^0 \|_2 = \eps$, the solution $X$ 
to equation \eqref{eq:1storder} given by 
theorem \ref{th:nonclassicalbis} is bounded from below, 
namely:    
\begin{equation}
\label{threedots}
\forall t\in\left[0,\mu\, \nu \, |\ln\eps|\, \eps^{1-\alpha}\right], 
\quad \|X(t)\|_\infty \geq M\, \eps
\end{equation}
with 
$
M=\frac{1}{2}\, {\left(
\frac{1}{2}\, \mathcal{I}(\mathcal{C})
\right)}^\frac{\alpha +1}{\alpha-1}
$.
\end{Proposition}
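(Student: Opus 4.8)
The plan is to combine the lower bound on $\|A(t)\|_\infty$ furnished by corollary \ref{colnd} with the error estimate of theorem \ref{th:nonclassicalbis}, the key point being that the modulated Ansatz $X_A$ retains a definite fraction of its amplitude while the approximation error is of strictly smaller order in $\eps$.

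First, given $X^0\in\mathcal{C}$ with $\|X^0\|_2=\eps$, I would apply theorem \ref{th:nonclassicalbis} with $X^\ast=X^0$, so that the hypothesis $\|X^0-X^\ast\|_2\leq C_{\rm{i}}\,|\ln\eps|\eps^{1+\eta}$ holds trivially. This produces the global solution $A\in\calC^2(\mathbb{R},\ell_2)$ of \eqref{eq:ampl} with $A(0)=(x^0-i\,y^0)/\sqrt2=\pi_1 X^0$, the Ansatz $X_A$, and the error bound $\|X(t)-X_A(t)\|_2\leq C_{\rm{l}}\,|\ln\eps|\eps^{1+\eta-\mu}$ on the interval $I_\eps\coloneq[0,\mu\,\nu\,|\ln\eps|\,\eps^{1-\alpha}]$.

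Next, I record two pointwise identities. Since $\pi_1 X_A(t)=A(t)e^{it}$ by \eqref{expo}, and since $\|X\|_\infty=\sqrt2\,\|\pi_1 X\|_\infty$ for every $X$ (the $\infty$-norm analogue of remark \ref{remcons}, valid because $|(\pi_1 X)_n|^2=\frac12(x_n^2+\dot x_n^2)$), one gets $\|X_A(t)\|_\infty=\sqrt2\,\|A(t)\|_\infty$; likewise $\|A(0)\|_2=\frac1{\sqrt2}\|X^0\|_2=\eps/\sqrt2$. Now $A(0)$ is, up to the scalar $1/\sqrt2$ and possibly a complex conjugation, the image of $X^0\in\mathcal{C}$ under the isometry identifying $\ell_2^2$ with $\ell_2(\mathbb{Z},\mathbb{C})$; as $Q_\alpha$ is invariant both under multiplication by nonzero scalars (it is homogeneous of degree $0$) and under conjugation (since $|\bar w|=|w|$), it follows that $Q_\alpha(A(0))\geq\mathcal{I}(\mathcal{C})$. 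Applying corollary \ref{colnd} to the global solution $A$ then yields, for all $t$,
\[
\|A(t)\|_\infty\geq{\left(\tfrac12\,\mathcal{I}(\mathcal{C})\right)}^{\frac{\alpha+1}{\alpha-1}}\|A(0)\|_2={\left(\tfrac12\,\mathcal{I}(\mathcal{C})\right)}^{\frac{\alpha+1}{\alpha-1}}\frac{\eps}{\sqrt2},
\]
whence $\|X_A(t)\|_\infty\geq{\left(\tfrac12\,\mathcal{I}(\mathcal{C})\right)}^{\frac{\alpha+1}{\alpha-1}}\eps=2M\eps$.

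Finally, I would combine these bounds using the triangle inequality and the embedding $\ell_2\subset\ell_\infty$: for $t\in I_\eps$,
\[
\|X(t)\|_\infty\geq\|X_A(t)\|_\infty-\|X(t)-X_A(t)\|_\infty\geq 2M\eps-C_{\rm{l}}\,|\ln\eps|\,\eps^{1+\eta-\mu}.
\]
Because $\mu\in(0,\eta)$ we have $\eta-\mu>0$, so $C_{\rm{l}}\,|\ln\eps|\,\eps^{\eta-\mu}\to0$ as $\eps\to0$; choosing $\eps_1(\mu,C_{\rm{i}},\mathcal{I}(\mathcal{C}))\leq\eps_0$ small enough that $C_{\rm{l}}\,|\ln\eps|\,\eps^{\eta-\mu}\leq M$ gives $\|X(t)\|_\infty\geq M\eps$ on $I_\eps$, as claimed. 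The only genuinely delicate part is the normalization bookkeeping—transferring the geometric quantity $\mathcal{I}(\mathcal{C})$ to the rescaled (and possibly conjugated) datum $A(0)$, and tracking the factors of $\sqrt2$ relating $X_A$, $A$ and $X^0$; the rest is the elementary fact that a fixed-fraction lower bound survives a perturbation of strictly smaller order.
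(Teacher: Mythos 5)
Your proposal is correct and follows essentially the same route as the paper: take $X^\ast=X^0$ in theorem \ref{th:nonclassicalbis}, bound $\|X_A(t)\|_\infty$ from below via corollary \ref{colnd}, and conclude with the triangle inequality, the embedding $\ell_2\subset\ell_\infty$, and the smallness of $C_{\rm{l}}\,|\ln\eps|\,\eps^{\eta-\mu}$. The only difference is that you spell out the $\sqrt2$-normalization and the transfer of $Q_\alpha$ through the isometry (and conjugation) relating $X^0$ to $A(0)$, which the paper handles implicitly by its remark preceding the proposition that the canonical identification of $\ell_2^2$ with $\ell_2(\bbZ,\bbC)$ is an isometry.
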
 

\begin{proof}
Fix $X^0 = X^\ast$ in theorem \ref{th:nonclassicalbis} and
note that 
\begin{eqnarray*}
\|X(t)\|_\infty & \geq & \|X_A(t)\|_\infty - \|X(t)-X_A(t)\|_\infty \\
& \geq & \|X_A(t)\|_\infty - \|X(t)-X_A(t)\|_2 . 
\end{eqnarray*}
Now, estimating $\|X_A(t)\|_\infty$ thanks to corollary \ref{colnd}
and $\|X(t)-X_A(t)\|_2$ with theorem \ref{th:nonclassicalbis} 
gives $ \|X(t)\|_\infty \geq \eps\, (2M- C_{\rm{l}}\, |\ln\eps|\, \eps^{\eta - \mu}) $.
Since $\eta > \mu$, this estimate implies
(\ref{threedots}) provided $\eps$ is small enough.
\end{proof}

\begin{Example}
Consider solutions $x=(x_n)_{n\in\mathbb{Z}}$ to \eqref{eq:cradle} 
with unperturbed initial positions, and with a group of 
$N$ consecutive particles having the same initial velocity $v_{\rm{i}}$ ($N\geq 1$ being fixed).
This corresponds to fixing 
$$
\forall n\in\mathbb{Z}, \,\, x_n(0)=0 \, , \quad 
\dot x_n(0) = 
\left\{ 
\begin{array}{ll}
{v_{\rm{i}}} & \text{ if } n\in\{1,\dots,N\}, \\
0 & \text{ elsewhere},
\end{array}
\right.
$$ 
i.e. $X^0 = v_{\rm{i}}\, (0, \mathbbm{1}_{\{ 1,\ldots ,N \}})^T$.
For $\mathcal{C}=\mbox{Span}\left( \, (0, \mathbbm{1}_{\{ 1,\ldots ,N \}})^T  \, \right)$
one has $\mathcal{I}( \mathcal{C} )
=Q_\alpha ( \mathbbm{1}_{\{ 1,\ldots ,N \}}) = 2^\frac{1}{\alpha+1} N^{-1/2}>0$
(see definition (\ref{defq})). Consequently one can apply proposition 
\ref{prop:nodispers}, where $M=2^\frac{2\alpha -1}{1-\alpha } N^\frac{\alpha +1}{2(1-\alpha )}$
and $\| X^0 \|_2 =|v_{\rm{i}}|\, N^{1/2}= \eps$. This yields for all $\eps \in (0,\eps_1 ]$ and
$t\in\left[0,\mu\, \nu \, |\ln\eps|\, \eps^{1-\alpha}\right]$~:
$$
\sup_{n\in\bbZ}{(x_n^2(t)+\dot{x}_n^2(t))^{1/2}} 
\geq 2^\frac{2\alpha -1}{1-\alpha }\, N^\frac{1}{1-\alpha }\, |v_{\rm{i}}|.
$$
\end{Example}

To interpret estimate (\ref{threedots}), it is interesting to recall that
$\|X(t)\|_\infty$ is conserved along evolution for solutions to 
the linearized equation (\ref{eq:cradlelin2})
(see remark \ref{remcons}), hence the bound (\ref{threedots}) estimates
the maximal decay of $\|X \|_\infty$ that could occur
over long times due to purely nonlinear effects.
This estimate can be compared with the classical Gronwall estimate given below.

\begin{Lemma}
Keep the notations of lemma \ref{cauchypb}. There exists a constant 
$\tilde\eps_1  >0$ such that the following property holds true.
For all $\delta \in (0,1)$, there exists $\tilde{T}(\delta ) \in (0,T_0 ]$
such that for all $X^0 \in \ell_\infty^2$ with $\| X^0 \|_\infty \leq \tilde\eps_1$ one has
\begin{equation}
\label{gw}
\|X(t)\|_\infty \geq \delta\, \| X^0 \|_\infty,
\ \ \ 
\forall\, t \in [0, \tilde{T}(\delta ) \, \| X^0 \|_\infty^{1-\alpha} ].
\end{equation}
\end{Lemma}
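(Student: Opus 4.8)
The plan is to combine the Duhamel representation with the isometry of the linear flow to produce a \emph{lower} bound, and then to absorb the nonlinear contribution using the a priori control already established in the proof of lemma \ref{cauchypb}. First I would write the solution in Duhamel form (\ref{duha}) and invoke remark \ref{remcons}, according to which $e^{Jt}$ is an isometry of $\ell_\infty^2$, so that $\|e^{Jt}X^0\|_\infty=\|X^0\|_\infty$ and $\|e^{Jt}\|_{\mathcal{L}(\ell_\infty^2)}=1$. Setting $\eps=\|X^0\|_\infty$ and applying the reverse triangle inequality to (\ref{duha}) then yields
\begin{equation*}
\|X(t)\|_\infty \geq \eps - \int_0^t \|G(X(s))\|_\infty\,\rmd s .
\end{equation*}

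Next I would restrict the initial amplitude by setting $\tilde\eps_1 \leq \theta^{1/(\alpha-1)}R$, with $R,\lambda,\theta$ as in the proof of lemma \ref{cauchypb}. On the one hand this guarantees that estimate (\ref{estimx}) holds with $p=\infty$, i.e. $\|X(s)\|_\infty \leq \theta^{1/(1-\alpha)}\eps \leq R$ for all $s\in[0,t_\eps]$ with $t_\eps=T_0\,\eps^{1-\alpha}$; on the other hand it ensures that the nonlinear bound (\ref{eg}), namely $\|G(X(s))\|_\infty \leq \lambda\,\|X(s)\|_\infty^\alpha$, is available throughout this time interval. Inserting both facts into the previous inequality gives, for $t\in[0,t_\eps]$,
\begin{equation*}
\|X(t)\|_\infty \geq \eps - \lambda\,\theta^{\frac{\alpha}{1-\alpha}}\,\eps^\alpha\, t .
\end{equation*}
Finally, for $t\leq \tilde T(\delta)\,\eps^{1-\alpha}$ the subtracted term is at most $\lambda\,\theta^{\alpha/(1-\alpha)}\,\tilde T(\delta)\,\eps$, so $\|X(t)\|_\infty \geq \eps\,\bigl(1-\lambda\,\theta^{\alpha/(1-\alpha)}\,\tilde T(\delta)\bigr)$. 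It then suffices to choose
\begin{equation*}
\tilde T(\delta)=\min\!\left(T_0,\ \frac{1-\delta}{\lambda\,\theta^{\frac{\alpha}{1-\alpha}}}\right)\in(0,T_0],
\end{equation*}
which makes the factor in parentheses at least $\delta$ and, since $\tilde T(\delta)\leq T_0$, keeps the whole interval $[0,\tilde T(\delta)\eps^{1-\alpha}]$ inside $[0,t_\eps]$, where the above a priori bounds hold. This establishes (\ref{gw}).

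Since every ingredient is already contained in lemma \ref{cauchypb}, I expect no genuine analytic difficulty: the only point requiring care is the bookkeeping of the smallness threshold $\tilde\eps_1$ and of the time window, i.e. making sure the a priori upper bound of lemma \ref{cauchypb} is valid all the way up to $t=\tilde T(\delta)\eps^{1-\alpha}$, which is precisely what forces $\tilde T(\delta)\leq T_0$. I note that the crude bound $\|X(s)\|_\infty\leq\theta^{1/(1-\alpha)}\eps$ used above can be replaced by the sharper $\|X(s)\|_\infty\leq\varrho_{\eps,\lambda}(s)$ from (\ref{threestars}); since $\lambda\,\varrho_{\eps,\lambda}^\alpha=\varrho_{\eps,\lambda}'$ by (\ref{defrho1}), the integral then telescopes to $\varrho_{\eps,\lambda}(t)-\eps$, giving the slightly better lower bound $\|X(t)\|_\infty\geq 2\eps-\varrho_{\eps,\lambda}(t)$ together with an explicit admissible value of $\tilde T(\delta)$ through the closed form (\ref{defrho2}).
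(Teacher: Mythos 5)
Your proposal is correct and follows essentially the same route as the paper's own proof: a reverse triangle inequality in the Duhamel formula (\ref{duha}) using the isometry of $e^{Jt}$, the nonlinear bound (\ref{eg}), and the a priori Gronwall estimate (\ref{estimx}) to control the integral term, after which the choice of $\tilde T(\delta)$ is elementary. The only difference is that you spell out the bookkeeping (the threshold $\tilde\eps_1=\theta^{1/(\alpha-1)}R$ and the explicit formula for $\tilde T(\delta)$) that the paper leaves as ``the result follows easily,'' plus an optional sharpening via $\varrho_{\eps,\lambda}$ that the paper does not pursue.
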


\begin{proof}
Using the triangle inequality in the
Duhamel form (\ref{duha}), the time-invariance of $\| e^{Jt} X^0\|_\infty $ and
estimate (\ref{eg}), one finds
$$
\|X(t)\|_\infty \geq \|X^0\|_\infty - \lambda
\int_0^t \|X(s)\|_\infty^\alpha\ \rmd s .
$$
Then we deduce from the Gronwall estimate (\ref{estimx})
$$
\|X(t)\|_\infty \geq \|X^0\|_\infty - t\, \lambda \, \theta^{\frac{\alpha}{1-\alpha }}\, \|X^0\|_\infty^\alpha ,
$$
from which the result follows easily.
\end{proof}

Estimate (\ref{gw}) differs from (\ref{threedots}) in the sense that it involves
only the $\ell_\infty$
norm and holds true for all sufficiently small initial data in $ \ell_\infty^2$.
In addition it is valid on time scales of order $ \| X^0 \|_\infty^{1-\alpha}$, whereas
(\ref{threedots}) holds true on longer time scales of order
$|\ln{(\| X^0 \|_2)}|\, \| X^0 \|_2^{1-\alpha}$.

\section{\label{just}Error bounds via Gronwall estimates}

In this section we prove theorems \ref{th:classical}
and \ref{th:nonclassical}. For this purpose, we check
in section \ref{resi}
the consistency of the Ansatz $X_\rmapp^\eps$ 
defined by (\ref{approxsolu})
and conclude using Gronwall estimates
(sections \ref{errorclassic} and \ref{nonclassical}).

\subsection{\label{resi}Estimate of the residual}

In the sequel we consider solutions $a(\tau )$ to equation \eqref{dpsa}
such that $a\in L^\infty(I,\ell_p)$ for some closed time interval $I$.
By lemma \ref{lm:estim_a}, this can be achieved for all initial condition
$a(0)=a^0\in\ell_p(\bbZ,\bbC)$ by choosing $I=[0,T]\subset (T_{\rm{min}},T_{\rm{max}})$
in the general case,
or $I=[0, +\infty )$ in the particular case $p=2$.
The amplitude $a$ determines
the approximate solution $X_\rmapp^\eps$ to equation \eqref{eq:1storder}
introduced in section \ref{multiple}. Following equation (\ref{approxsolu}),
we recall that
$$
X_\rmapp^\eps(t) = \eps Y^\eps (\tau ,t), \quad \tau = \eps^{\alpha-1}t,
$$
where 
$Y^\eps ={Y}_0 + \eps^{\alpha -1} {Y}_1$ and
$Y_k (\tau ,t)=(\mathcal{Y}_k(\tau))(t)$ for $k=0,1$.

In this section we check that $X_\rmapp^\eps$ solves
equation \eqref{eq:1storder} up to the residual
\begin{equation}
\label{defeeps}
E^\eps=\dot X_\rmapp^\eps - J\, X_\rmapp^\eps - G(X_\rmapp^\eps)
\end{equation}
that remains
$o(\varepsilon^{\alpha})$ when $\eps^{\alpha-1}t\in I$.

To this aim, we first prove the following lemma providing
bounds on the approximate solutions
$\mathcal{Y}_0$ and the correctors $\mathcal{Y}_1$ derived from
the amplitudes $a(\tau )$.
Below we denote by $\calC^k_b(I,\bbD)$ the Banach space of
$\calC^k$ functions from $I$ into $\bbD$ with bounded derivatives up to order $k$, 
equipped with the usual supremum norm
(see section \ref{sec:linear} for the definition of function spaces $\mathbb{X}^k$).

\begin{Lemma}
\label{lm:estim_tY}
Consider any solution $a\in L^\infty(I,\ell_p)$
to equation \eqref{dpsa},
the associated leading order solution $\mathcal{Y}_0$ of (\ref{eq:2scales2}) defined by
(\ref{y0}), and its corrector $\mathcal{Y}_1$ defined by (\ref{o1sol}). 
There exist $M_0, M_1 >0$ such that
\begin{equation}
\label{estlem}
\| \mathcal{Y}_0 \|_{\calC^1_b(I,\bbD)} 
 \leq   M_0\, (\|a\|_{L^\infty(I,\ell_p)}+\|a\|_{L^\infty(I,\ell_p)}^\alpha ),
\end{equation}
\begin{equation}
\label{estlem2}
\| \mathcal{Y}_1 \|_{\calC^1_b(I,\bbD)} 
 \leq   M_1\, (\|a\|_{L^\infty(I,\ell_p)}^{\alpha} + \|a\|_{L^\infty(I,\ell_p)}^{2\alpha -1}).
\end{equation}
\end{Lemma}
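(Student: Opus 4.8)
The plan is to establish the two bounds \eqref{estlem} and \eqref{estlem2} directly from the explicit formulas \eqref{y0} and \eqref{o1sol}, tracking how each object depends on the amplitude $a$. The starting observation is that $\mathcal{Y}_0$ is an affine-type expression in $a$ (it is linear in $a$ via \eqref{y0}), while $\mathcal{Y}_1 = \mathcal{K}\, G_\alpha(\mathcal{Y}_0)$ is built from the $\alpha$-homogeneous nonlinearity $G_\alpha$ composed with $\mathcal{Y}_0$ and then acted on by the bounded operator $\mathcal{K}$ from Lemma \ref{solpart}. So the two estimates should follow by combining: (i) the mapping properties of $G_\alpha$ as a $\calC^1$ operator on $\ell_p$ that is $\calO(\|\cdot\|^\alpha)$ with derivative $\calO(\|\cdot\|^{\alpha-1})$ (this is exactly the content of Lemma \ref{estglp} applied to the leading nonlinearity), and (ii) the boundedness of $\mathcal{K}:\bbX\to\bbD$.

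First I would bound $\mathcal{Y}_0$. From \eqref{y0}, for fixed $\tau$ the function $(\mathcal{Y}_0(\tau))(t) = a(\tau)e^{it}e_1 + \bar a(\tau)e^{-it}e_{-1}$ has $\bbD$-norm controlled linearly by $\|a(\tau)\|_p$, uniformly in $t$; taking the supremum over $\tau\in I$ gives the $\mathbb{X}^1$-part of the $\calC^0_b$ bound. For the $\calC^1_b$ bound I need $\partial_\tau \mathcal{Y}_0$, which by the amplitude equation \eqref{ampli}–\eqref{eq:ampl} equals $P\,G_\alpha(\mathcal{Y}_0)$, i.e. $\partial_\tau a = -i\,\delta^+ f(\delta^- a)$ component-wise. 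Since $\|f(a)\|_p \leq \omega_0\|a\|_p^\alpha$ and $\delta^\pm$ are bounded on $\ell_p$, this term is $\calO(\|a\|_p^\alpha)$. Adding the two contributions yields the sum $\|a\|+\|a\|^\alpha$ appearing in \eqref{estlem}.

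Next, for $\mathcal{Y}_1 = \mathcal{K}\,G_\alpha(\mathcal{Y}_0)$, I would use $\|\mathcal{Y}_1(\tau)\|_{\bbD} \leq \|\mathcal{K}\|_{\mathcal{L}(\bbX,\bbD)}\,\|G_\alpha(\mathcal{Y}_0(\tau))\|_{\bbX}$, and bound $\|G_\alpha(\mathcal{Y}_0(\tau))\|_{\bbX} = \calO(\|\mathcal{Y}_0(\tau)\|_{\bbX}^\alpha) = \calO(\|a(\tau)\|_p^\alpha)$ using the $\alpha$-homogeneity of $G_\alpha$ together with the $\mathbb{X}^0$-bound on $\mathcal{Y}_0$; this produces the $\|a\|^\alpha$ term in \eqref{estlem2}. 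For the time-derivative part I would differentiate $\mathcal{Y}_1(\tau) = \mathcal{K}\,G_\alpha(\mathcal{Y}_0(\tau))$ in $\tau$, obtaining $\partial_\tau\mathcal{Y}_1 = \mathcal{K}\,DG_\alpha(\mathcal{Y}_0)\,\partial_\tau\mathcal{Y}_0$. Estimating $\|DG_\alpha(\mathcal{Y}_0)\| = \calO(\|a\|_p^{\alpha-1})$ and $\|\partial_\tau\mathcal{Y}_0\| = \calO(\|a\|_p^\alpha)$ from the previous step, the product is $\calO(\|a\|_p^{2\alpha-1})$, which matches the second term in \eqref{estlem2}. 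Taking suprema over $\tau\in I$ converts these pointwise bounds into $\calC^1_b(I,\bbD)$ bounds, using that $a\in L^\infty(I,\ell_p)$.

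The main obstacle I anticipate is the $\calC^1$-in-$\tau$ regularity and chain-rule step for $\mathcal{Y}_1$: one must justify differentiating $\tau\mapsto G_\alpha(\mathcal{Y}_0(\tau))$ inside $\mathbb{X}^0=\calC^0(S^1,\ell_p^2)$ and commuting $\partial_\tau$ with the operator $\mathcal{K}$. Since $G_\alpha$ is only $\calC^1$ (the nonlinearity $|r|^{\alpha-1}r$ has limited smoothness for $\alpha$ close to $1$), one gets $\mathcal{Y}_1\in\calC^1$ but not better, and care is needed that the composition $DG_\alpha(\mathcal{Y}_0)\,\partial_\tau\mathcal{Y}_0$ is genuinely continuous in $\tau$ with values in $\mathbb{X}^0$ — this rests on $a\in\calC^2(I,\ell_p)$ (so $\partial_\tau\mathcal{Y}_0$ is itself $\calC^1$) and on the continuity of $X\mapsto DG_\alpha(X)$ guaranteed by Lemma \ref{estglp}. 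Once this regularity bookkeeping is in place, the inequalities themselves are routine applications of the homogeneity bounds and the boundedness of $\mathcal{K}$ and $\delta^\pm$.
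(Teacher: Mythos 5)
Your proposal is correct and follows essentially the same route as the paper's proof: a linear-in-$a$ bound on $\mathcal{Y}_0$ from its explicit form, the amplitude equation to bound $\partial_\tau\mathcal{Y}_0 = P\,G_\alpha(\mathcal{Y}_0)$ by $\calO(\|a\|^\alpha)$, and then the boundedness of $\mathcal{K}:\bbX\to\bbD$ together with the chain rule $\partial_\tau\mathcal{Y}_1=\mathcal{K}\,DG_\alpha(\mathcal{Y}_0)\,\partial_\tau\mathcal{Y}_0$ and the homogeneity bounds $\|G_\alpha(X)\|=\calO(\|X\|^\alpha)$, $\|DG_\alpha(X)\|=\calO(\|X\|^{\alpha-1})$ to get the $\|a\|^\alpha+\|a\|^{2\alpha-1}$ bound. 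The only caveat, which your appeal to homogeneity already resolves, is that the bounds on $G_\alpha$ must hold globally (not just as $X\to0$ as in Lemma \ref{estglp}), which is exactly why the paper states them separately in (\ref{estimgal}) and lifts them to $\bbX$ via the omega-lemma.
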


\begin{proof}
We immediately deduce from Lemma \ref{lm:estim_a} that 
$\mathcal{Y}_0\in\calC^2_b(I,\bbD)$ and
\begin{equation}
\label{y0inf}
\| \mathcal{Y}_0 \|_{L^\infty (I,\mathbb{X}^k)}  \leq  (1+k)\, \sqrt{2}\, \|a\|_{L^\infty(I,\ell_p)},
\quad k=0,1.
\end{equation}
In what follows we estimate $G_\alpha(\mathcal{Y}_0)$, in order to 
estimate $\partial_\tau \mathcal{Y}_0$ from equation (\ref{ampli}) and
$\mathcal{Y}_1$ from equation (\ref{o1sol}). 
Since $G_\alpha \in \calC^1 (\ell_p^2,\ell_p^2)$ we have also $G_\alpha \in \calC^1 (\bbX , \bbX)$
by the omega-lemma (see \cite{amr}, lemma 2.4.18). Moreover, standard estimates yield for all $X\in \ell_p^2$
\begin{equation}
\label{estimgal}
\| G_\alpha (X) \|_p \leq M_3\,  \|X\|_p^{\alpha}, \quad
\| DG_\alpha (X) \|_{\mathcal{L}(\ell_p^2)} \leq M_4 \, \|X\|_p^{\alpha -1},
\end{equation}
which implies for all $X\in \bbX$
$$
\| G_\alpha (X) \|_{\bbX} \leq M_3 \, \|X\|_{\bbX}^{\alpha}, \quad
\| DG_\alpha (X) \|_{\mathcal{L}({\bbX})} \leq M_4 \, \|X\|_{\bbX}^{\alpha -1}.
$$
We have then
\begin{equation*}
\|G_\alpha(\mathcal{Y}_0)\|_{L^\infty(I,\bbX)} 
\leq M_3 \, \|\mathcal{Y}_0\|_{L^\infty(I,\bbX)}^{\alpha},
\end{equation*}
\begin{equation*}
\|\partial_\tau \mathcal{Y}_0\|_{L^\infty(I,\bbX)}
\leq \| P \|_{\mathcal{L}(\mathbb{X}^0)}\, \|G_\alpha(\mathcal{Y}_0)\|_{L^\infty(I,\bbX)} \leq
M_3 \, \|\mathcal{Y}_0\|_{L^\infty(I,\bbX)}^{\alpha},
\end{equation*}
\begin{equation*}
\|\partial_\tau G_\alpha(\mathcal{Y}_0)\|_{L^\infty(I,\bbX)} \leq
M_4\, \|\mathcal{Y}_0\|_{L^\infty(I,\bbX)}^{\alpha -1} \, \|\partial_\tau \mathcal{Y}_0\|_{L^\infty(I,\bbX)}
\leq M_3 M_4\, \|\mathcal{Y}_0\|_{L^\infty(I,\bbX)}^{2\alpha -1}
\end{equation*}
(the estimate of $\partial_\tau \mathcal{Y}_0$ follows from 
equation (\ref{ampli}) and $\| P \|_{\mathcal{L}(\mathbb{X}^0)}=1$).
From these estimates and (\ref{y0inf}), 
there exists $ M_5 >0$ such that
\begin{equation}
\label{dy0}
\|\partial_\tau \mathcal{Y}_0\|_{L^\infty(I,\mathbb{X}^1)}
\leq  M_5\, \| a\|_{L^\infty(I,\ell_p)}^{\alpha}, 
\end{equation}
\begin{equation}
\label{g0}
\|G_\alpha(\mathcal{Y}_0)\|_{L^\infty(I,\bbX)} 
\leq  M_5\, \| a\|_{L^\infty(I,\ell_p)}^{\alpha}, 
\end{equation}
\begin{equation}
\label{g1}
\|\partial_\tau G_\alpha(\mathcal{Y}_0)\|_{L^\infty(I,\bbX)} \leq
 M_5\, \|a\|_{L^\infty(I,\ell_p)}^{2\alpha -1}.
\end{equation}
Consequently, estimate (\ref{estlem}) is established thanks to (\ref{y0inf})
and (\ref{dy0}).
Furthermore, using the fact that ${\mathcal{K}}\in \mathcal{L}(\bbX,\bbD)$, 
we obtain $\mathcal{Y}_1 = {\mathcal{K}}G_\alpha(\mathcal{Y}_0)\in \calC^1_b(I,\bbD)$ and
\begin{equation*}
\|\mathcal{Y}_1\|_{\calC^1_b(I,\bbD)} \leq 
\| \mathcal{K} \|_{\mathcal{L}(\bbX,\bbD)}  M_5\, (\|a\|_{L^\infty(I,\ell_p)}^{\alpha} + \|a\|_{L^\infty(I,\ell_p)}^{2\alpha -1}),
\end{equation*}
which establishes
estimate (\ref{estlem2}).
\end{proof}

Now we prove the main result of this section.
The subsequent estimates will involve 
c.n.d.f. of various norms which we will denote by $\CNDF_k$.

\begin{Lemma}
\label{lm:residual}
There exist a c.n.i.f. $\CNIF_1$ and a c.n.d.f. $\CNDF_{\rm{E}}$
such that for all $a\in L^\infty(I,\ell_p)$ 
solution to equation \eqref{dpsa}
and $\eps\leq\CNIF_1(\|a\|_{L^\infty(I,\ell_p)})$, 
the residual $E^\eps$ defined by (\ref{defeeps}) satisfies
\begin{equation}
\label{ineqlemma}
\sup_{\eps^{\alpha-1}t\in I} \|E^\eps(t)\|_p 
\leq \CNDF_{\rm{E}}(\|a\|_{L^\infty(I,\ell_p)}) \eps^{\alpha + \eta},
\end{equation}
where $\eta=\min(\alpha-1,\beta,\gamma)$.
\end{Lemma}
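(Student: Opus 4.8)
The plan is to plug the Ansatz $X_\rmapp^\eps(t) = \eps Y^\eps(\tau,t)$ with $Y^\eps = \mathcal{Y}_0 + \eps^{\alpha-1}\mathcal{Y}_1$ and $\tau = \eps^{\alpha-1}t$ directly into the definition \eqref{defeeps} of $E^\eps$ and track how each term scales in $\eps$. The crucial point is that the construction of section \ref{multiple} was arranged precisely so that all terms of order $\eps$ and $\eps^\alpha$ cancel, so $E^\eps$ collects only the genuinely neglected contributions. First I would apply the chain rule to $\dot X_\rmapp^\eps$: the fast-time derivative produces $\eps(\partial_t Y^\eps)$ while the slow-time derivative produces $\eps^\alpha(\partial_\tau Y^\eps)$, since $\frac{d}{dt}\tau = \eps^{\alpha-1}$. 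Using the nonlinearity decomposition \eqref{decompg}, namely $\tfrac1\eps G(\eps Y) = \eps^{\alpha-1}G_\alpha(Y) + \eps^{\alpha-1}R_\eps(Y)$, I would rewrite $G(X_\rmapp^\eps) = \eps^\alpha\bigl[G_\alpha(Y^\eps) + R_\eps(Y^\eps)\bigr]$.

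\textbf{Cancelling the main orders.}
Collecting terms and factoring, I expect
$$
E^\eps = \eps\,(\partial_t - J)\mathcal{Y}_0
+ \eps^\alpha\bigl[(\partial_t-J)\mathcal{Y}_1 + \partial_\tau\mathcal{Y}_0 - G_\alpha(Y^\eps) - R_\eps(Y^\eps)\bigr].
$$
The $O(\eps)$ term vanishes identically by \eqref{ordre0}, since $\mathcal{Y}_0(\tau)\in\ker(\partial_t-J)$. For the bracket, equations \eqref{ampli} and \eqref{o1bis} together give $(\partial_t-J)\mathcal{Y}_1 + \partial_\tau\mathcal{Y}_0 = (\partial_t-J)\mathcal{Y}_1 + P\,G_\alpha(\mathcal{Y}_0) = G_\alpha(\mathcal{Y}_0)$. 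Hence the whole $O(\eps^\alpha)$ piece reduces to $\eps^\alpha\bigl[G_\alpha(\mathcal{Y}_0) - G_\alpha(Y^\eps) - R_\eps(Y^\eps)\bigr]$, so the leading $O(\eps^\alpha)$ contributions also cancel and $E^\eps$ is left with only two types of error: the discrepancy $G_\alpha(\mathcal{Y}_0) - G_\alpha(Y^\eps)$ coming from evaluating the homogeneous nonlinearity at $Y^\eps$ rather than at $\mathcal{Y}_0$, and the perturbation term $R_\eps(Y^\eps)$.

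\textbf{Estimating the two error terms.}
For the first term I would use that $Y^\eps - \mathcal{Y}_0 = \eps^{\alpha-1}\mathcal{Y}_1$, combine the Lipschitz bound coming from the derivative estimate $\|DG_\alpha(X)\|_{\mathcal{L}(\ell_p^2)} \le M_4\|X\|_p^{\alpha-1}$ in \eqref{estimgal} with the bounds of lemma \ref{lm:estim_tY}, obtaining a bound of order $\eps^{\alpha}\cdot\eps^{\alpha-1}$ times a c.n.d.f.\ of $\|a\|_{L^\infty(I,\ell_p)}$; thus this term contributes $\eps^{\alpha}\cdot O(\eps^{\alpha-1}) = O(\eps^{2\alpha-1})$, which since $\alpha>1$ is $O(\eps^{\alpha+(\alpha-1)})$. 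For the second term, estimate \eqref{estre} gives $\|R_\eps(Y^\eps)\|_p \le C\,\eps^{\min(\beta,\gamma)}\|Y^\eps\|_p^\alpha(\|Y^\eps\|_p^\beta+\|Y^\eps\|_p^\gamma)$, valid once $\eps<\eps_0(\|Y^\eps\|_\infty)$; bounding $\|Y^\eps\|$ via lemma \ref{lm:estim_tY} yields a contribution $\eps^\alpha\cdot O(\eps^{\min(\beta,\gamma)})$. Taking the worse of the two exponents gives precisely $\eps^{\alpha+\eta}$ with $\eta = \min(\alpha-1,\beta,\gamma)$.

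\textbf{The main obstacle and the c.n.i.f.}
The only delicate point is the definition of $\CNIF_1$: estimate \eqref{estre} holds only for $\eps < \eps_0(\|Y^\eps\|_\infty)$, and $\|Y^\eps\|_\infty$ itself depends on $\eps$ through the corrector. I would therefore bound $\|Y^\eps\|_\infty \le \|\mathcal{Y}_0\|_\infty + \eps^{\alpha-1}\|\mathcal{Y}_1\|_\infty$ uniformly for $\eps\le 1$ using lemma \ref{lm:estim_tY}, so that $\|Y^\eps\|_\infty$ is controlled by a c.n.d.f.\ of $\|a\|_{L^\infty(I,\ell_p)}$ alone; composing $\eps_0$ with this bound produces the desired c.n.i.f.\ $\CNIF_1(\|a\|_{L^\infty(I,\ell_p)})$. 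Assembling all constants into a single c.n.d.f.\ $\CNDF_{\rm{E}}$ then gives \eqref{ineqlemma}. The bookkeeping is entirely routine once this uniform control of $\|Y^\eps\|_\infty$ is in place; the limited smoothness of the potentials never enters because we only need $\mathcal{C}^1$ regularity of $G_\alpha$ and the explicit scaling estimate \eqref{estre}.
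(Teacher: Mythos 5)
Your strategy is exactly the paper's: insert the Ansatz, use the construction of $\mathcal{Y}_0$ and $\mathcal{Y}_1$ to cancel the orders $\eps$ and $\eps^{\alpha}$, and estimate what remains via lemma \ref{lm:estim_tY}, the bound on $DG_\alpha$ in \eqref{estimgal}, and estimate \eqref{estre}; your handling of the c.n.i.f.\ $\CNIF_1$ (uniform control of $\|Y^\eps\|_\infty$ for $\eps\le 1$) is also the right one. However, your displayed formula for $E^\eps$ contains an algebraic omission, and as a result your enumeration of the error terms is incomplete. The chain rule gives
$$
\dot X_\rmapp^\eps = \eps\,\partial_t Y^\eps + \eps^{\alpha}\,\partial_\tau Y^\eps ,
$$
and since $Y^\eps = \mathcal{Y}_0 + \eps^{\alpha-1}\mathcal{Y}_1$, the slow-time contribution is $\eps^{\alpha}\partial_\tau\mathcal{Y}_0 + \eps^{2\alpha-1}\partial_\tau\mathcal{Y}_1$. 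Your bracket keeps only $\partial_\tau\mathcal{Y}_0$, so the term $\eps^{2\alpha-1}\partial_\tau\mathcal{Y}_1$ has been dropped. After the cancellations coming from \eqref{ordre0}, \eqref{ampli} and \eqref{o1bis}, the residual therefore consists of \emph{three} terms, not two:
$$
E^\eps = \eps^{\alpha}\bigl[G_\alpha(\mathcal{Y}_0) - G_\alpha(Y^\eps)\bigr] + \eps^{2\alpha-1}\partial_\tau\mathcal{Y}_1 - \eps^{\alpha}R_\eps(Y^\eps),
$$
which is the paper's formula \eqref{decompres}. The dropped term is not zero: $\mathcal{Y}_1 = \mathcal{K}\,G_\alpha(\mathcal{Y}_0)$ inherits a genuine $\tau$-dependence from $\mathcal{Y}_0$.

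The omission is fixable with tools you already invoke --- indeed this is precisely why lemma \ref{lm:estim_tY} bounds $\mathcal{Y}_1$ in $\calC^1_b(I,\bbD)$ (derivative included) rather than merely in $L^\infty$: estimate \eqref{estlem2} yields a c.n.d.f.\ $\CNDF_1$ with
$$
\sup_{\eps^{\alpha-1}t\in I}\|\partial_\tau Y_1(\eps^{\alpha-1}t,t)\|_p \le \CNDF_1(\|a\|_{L^\infty(I,\ell_p)}),
$$
so the missing term contributes $O(\eps^{2\alpha-1}) = O(\eps^{\alpha+(\alpha-1)})$, which is absorbed into $\eps^{\alpha+\eta}$ since $\eta\le\alpha-1$ and $\eps\le 1$. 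With that term restored and estimated, your argument coincides with the paper's proof; without it, the claimed identity for $E^\eps$ is false and the final bound is not established as written.
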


\begin{proof}
Let us compute $E^\eps$. Using identities (\ref{approxsolu}),
(\ref{ordre0}), (\ref{o1}) and (\ref{decompg}), one obtains after some elementary computations
\begin{eqnarray}
\label{decompres}
E^\eps (t)
&= &  \eps^{\alpha} [G_\alpha({Y}_0(\eps^{\alpha-1} t,t)) - G_\alpha(Y^\eps(\eps^{\alpha-1} t,t) )] \\
\nonumber
&+& \eps^{2\alpha-1} \partial_\tau {Y}_1(\eps^{\alpha-1}t ,t) - \eps^{\alpha} R_\eps(Y^\eps (\eps^{\alpha-1}t ,t)).
\end{eqnarray}
Let us estimate each term of (\ref{decompres}) separately. 
From lemma \ref{lm:estim_tY} we already know 
a c.n.d.f. $\CNDF_1$ such that
\begin{equation}
\label{estdt}
\sup_{\eps^{\alpha-1}t\in I}
\|\partial_\tau Y_1(\eps^{\alpha-1}t,t)\|_p \leq \CNDF_1(\|a\|_{L^\infty(I,\ell_p)}). 
\end{equation}
Moreover, by estimate (\ref{estre}) and lemma \ref{lm:estim_tY},
there exists a c.n.i.f. $\CNIF_1$ and a c.n.d.f. $\CNDF_2$ such that 
for $\eps\leq\CNIF_1(\|a\|_{L^\infty(I,\ell_p)})$
\begin{equation}
\label{estrest}
\sup_{\eps^{\alpha-1}t\in I} \|R_\eps( Y^\eps (\eps^{\alpha-1}t ,t))\|_p 
\leq \eps^{\min(\beta,\gamma)} \CNDF_2(\|a\|_{L^\infty(I,\ell_p)}).
\end{equation}
Let us impose $\CNIF_1 \leq 1$ without loss of generality.
The first term at the right side
of (\ref{decompres}) can be estimated as follows for $\eps^{\alpha-1}t\in I$,
using estimate (\ref{estimgal}) and lemma \ref{lm:estim_tY}~:
\begin{equation*}
\begin{aligned}
\| & G_\alpha({Y}_0(\eps^{\alpha-1} t,t)) - G_\alpha(Y^\eps(\eps^{\alpha-1} t,t) ) \|_p \\
& = \eps^{\alpha-1}\big\|\int_0^1 DG_\alpha(Y_0(\eps^{\alpha-1} t,t)+\theta\eps^{\alpha-1}Y_1(\eps^{\alpha-1} t,t))\ \rmd\theta\cdot 
Y_1(\eps^{\alpha-1} t,t)\big\|_p \\
& \leq \eps^{\alpha-1}\int_0^1 \|DG_\alpha(Y_0(\eps^{\alpha-1} t,t)+\theta\eps^{\alpha-1}Y_1(\eps^{\alpha-1} t,t))\|_{\calL(\ell_p^2)} 
 \|Y_1(\eps^{\alpha-1} t,t)\|_p \\
& \leq \eps^{\alpha-1}\CNDF_3(\|Y_0(\eps^{\alpha-1} t,t)\|_p+ \|Y_1(\eps^{\alpha-1} t,t)\|_p)\
\|Y_1(\eps^{\alpha-1} t,t)\|_p \\
& \leq \eps^{\alpha-1}\CNDF_4(\|a\|_{L^\infty(I,\ell_p)}).
\end{aligned}
\end{equation*}
Combining this estimate with (\ref{estdt}) and (\ref{estrest})
yields the final estimate (\ref{ineqlemma}).
\end{proof}

\vspace{1ex}

\begin{Example}
In the case $\alpha = 3/2$ (as for the classical Hertz force), if $W$ and $\phi$ are
$\mathcal{C}^3$ at both sides of the origin, then $\beta , \gamma \geq 1/2$
in (\ref{as:w}) and (\ref{as:phi}),
and we have by lemma \ref{lm:residual}
$$
\sup_{\eps^{1/2}t\in I} \|E^\eps(t)\|_p 
\leq \CNDF_{\rm{E}}(\|a\|_{L^\infty(I,\ell_p)}) \eps^{2}.
$$
\end{Example}

\subsection{\label{errorclassic}Proof of theorem \ref{th:classical}} 

To prove theorem \ref{th:classical},
we first estimate the error between
the approximate solution $X_\rmapp^\eps$ 
constructed from a given $a(0)=a^0\in\ell_p(\bbZ,\bbC)$
(equation (\ref{approxsolu})) and the exact solution 
$X$ to equation \eqref{eq:1storder} for $X(0)\approx X_\rmapp^\eps (0)$,
in the case when $\eps \approx 0$ and on $\mathcal{O}(\eps^{1-\alpha})$
time scales. This result will follow in a rather standard way from
Gronwall estimates.
In a second step, we check on these
time scales the validity of the
leading-order approximate solution $X_{a}^\eps$
(equation (\ref{approxsoluo1})).

\vspace{1ex}

By lemma \ref{lm:estim_a}, for all initial condition $a^0\in\ell_p(\bbZ,\bbC)$
equation \eqref{dpsa} admits a unique maximal solution $a$ defined 
for $\tau \in (T_{\rm{min}},T_{\rm{max}})$. Let us fix $T\in (0,T_{\rm{max}})$
and restrict $a$ to $[0,T]$, so that $a\in L^\infty([0,T],\ell_p)$. 
From lemma \ref{lm:estim_tY} it follows  
that $X_\rmapp^\eps \in L^\infty([0,t_\rmm(\eps)],\ell_p^2)$
with $t_\rmm(\eps)=T/\eps^{\alpha-1}$, and we have $E^\eps \in L^\infty([0,t_\rmm(\eps)],\ell_p^2)$
by lemma \ref{lm:residual}.

Let $Z=X-X_\rmapp^\eps$ and $Z_0 \equiv X(0)-X_\rmapp^\eps(0)$.
Then $Z$ is solution to 
\begin{eqnarray*}
\dot Z - J\, Z & = & G(X_\rmapp^\eps+Z)-G(X_\rmapp^\eps) - E^\eps, 
\\
Z(0) & = & Z_0 ,
\end{eqnarray*}
or equivalently 
\begin{equation}
\label{duhaz}
Z(t) = e^{Jt} Z_0 + \int_0^t e^{J(t-s)} [G(X_\rmapp^\eps+Z)-G(X_\rmapp^\eps)-E^\eps](s)\ \rmd s.
\end{equation}
By Cauchy--Lipschitz theorem, the solution 
$Z\in \calC^1([0,t_\rmmax (\eps)],\ell_p^2)$ 
to this equation is 
defined up to some maximal existence time $t_\rmmax (\eps ) \leq t_\rmm(\eps)$, depending {\it a priori} on $Z_0$. 

\begin{Remark}
In fact we prove below that
$t_\rmmax (\eps) = t_\rmm(\eps)$ when $\eps$ and $\| Z_0 \|_p $ are small enough.
This will provide a solution $X$ to \eqref{eq:1storder} defined 
at least for $t\in [0, t_\rmm(\eps) ]$.
\end{Remark}

We have already estimated the residual $E^\eps$ in the previous section.
We now need to estimate the difference  $G(X_\rmapp^\eps+Z)-G(X_\rmapp^\eps)$. 
For this purpose we assume $\|Z_0\|_p < \eps$ and define 
$$
t_\eps ( Z_0)=\sup\{t\in[0,t_\rmmax], \ \|Z(t)\|_p \leq \eps\}.
$$
\begin{Remark}
\label{remte}
Since $\|Z \|_p \in \calC^0 ([0,t_\rmmax])$, we have either 
$t_\eps = t_\rmmax$ or $t_\eps < t_\rmmax$ and
$\|Z(t_\eps )\|_p = \eps $.
\end{Remark}

\begin{Lemma}
\label{lm:Lipschitz}
There exists a c.n.i.f. $\eps_2$ and a c.n.d.f. $\CNDF_{\rm{L}}$ such that 
for all solution $a\in L^\infty([0,T],\ell_p)$ to equation \eqref{dpsa}, 
$\eps\leq\CNIF_2(\|a\|_{L^\infty([0,T],\ell_p)})$, $Z_0 \in \ell_p^2$ with
$\|Z_0\|_p < \eps$ and $t\in[0,t_\eps ( Z_0)]$, 
\begin{equation}
\label{estlemlip}
\|G(X_\rmapp^\eps(t)+Z(t))-G(X_\rmapp^\eps(t))\|_p 
\leq \eps^{\alpha-1} \CNDF_{\rm{L}}(\|a\|_{L^\infty([0,T],\ell_p)})\,  \|Z(t)\|_p.
\end{equation}
\end{Lemma}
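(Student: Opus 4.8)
The plan is to exploit the full nonlinearity of $G$: since $DG$ vanishes to order $\alpha-1$ at the origin and both $X_\rmapp^\eps(t)$ and $Z(t)$ are $\calO(\eps)$ on the interval $[0,t_\eps(Z_0)]$, the Lipschitz constant of $G$ along the segment joining $X_\rmapp^\eps(t)$ to $X_\rmapp^\eps(t)+Z(t)$ will turn out to be $\calO(\eps^{\alpha-1})$, which is exactly the gain asserted in (\ref{estlemlip}). First I would write the difference as a mean value integral,
\begin{equation*}
G(X_\rmapp^\eps(t)+Z(t))-G(X_\rmapp^\eps(t))
=\left(\int_0^1 DG\big(X_\rmapp^\eps(t)+\theta Z(t)\big)\, \rmd\theta\right)\cdot Z(t),
\end{equation*}
so that
\begin{equation*}
\|G(X_\rmapp^\eps(t)+Z(t))-G(X_\rmapp^\eps(t))\|_p
\leq \sup_{\theta\in[0,1]}\|DG(X_\rmapp^\eps(t)+\theta Z(t))\|_{\mathcal{L}(\ell_p^2)}\ \|Z(t)\|_p .
\end{equation*}
It then remains to bound the operator norm of $DG$ by $\eps^{\alpha-1}\,\CNDF_{\rm{L}}(\|a\|_{L^\infty([0,T],\ell_p)})$.

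To control the argument, I would use lemma \ref{lm:estim_tY}. Writing $N=\|a\|_{L^\infty([0,T],\ell_p)}$ and recalling from (\ref{approxsolu}) that $X_\rmapp^\eps=\eps\,\mathcal{Y}_0+\eps^{\alpha}\mathcal{Y}_1$ evaluated at $(\eps^{\alpha-1}t,t)$, estimates (\ref{estlem})--(\ref{estlem2}) give, for $\eps\leq 1$ (so that $\eps^{\alpha}\leq\eps$, using $\alpha>1$), a bound $\|X_\rmapp^\eps(t)\|_p\leq \eps\, \CNDF_0(N)$ with $\CNDF_0(N)=M_0(N+N^\alpha)+M_1(N^\alpha+N^{2\alpha-1})$ a c.n.d.f. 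On $[0,t_\eps(Z_0)]$ the definition of $t_\eps$ ensures $\|Z(t)\|_p\leq\eps$, hence for every $\theta\in[0,1]$
\begin{equation*}
\|X_\rmapp^\eps(t)+\theta Z(t)\|_p\leq \eps\,(\CNDF_0(N)+1).
\end{equation*}
By the second estimate in (\ref{estmapg}) of lemma \ref{estglp} there exist $R',\CNDF>0$ with $\|DG(X)\|_{\mathcal{L}(\ell_p^2)}\leq \CNDF\,\|X\|_p^{\alpha-1}$ whenever $\|X\|_p\leq R'$; I would therefore set
\begin{equation*}
\CNIF_2(N)=\min\!\left(1,\ \frac{R'}{\CNDF_0(N)+1}\right),
\end{equation*}
which is a c.n.i.f. of $N$, so that $\eps\leq\CNIF_2(N)$ forces $\|X_\rmapp^\eps(t)+\theta Z(t)\|_p\leq R'$ for all $\theta\in[0,1]$.

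With this choice the derivative bound applies along the whole segment, giving
\begin{equation*}
\|DG(X_\rmapp^\eps(t)+\theta Z(t))\|_{\mathcal{L}(\ell_p^2)}
\leq \CNDF\,\big(\eps\,(\CNDF_0(N)+1)\big)^{\alpha-1}
=\eps^{\alpha-1}\,\CNDF\,(\CNDF_0(N)+1)^{\alpha-1},
\end{equation*}
and inserting this into the previous display yields (\ref{estlemlip}) with the c.n.d.f. $\CNDF_{\rm{L}}(N)=\CNDF\,(\CNDF_0(N)+1)^{\alpha-1}$. I do not expect a genuine obstacle here: the argument is a quantitative mean value inequality, and the only delicate point is the bookkeeping—verifying that $\CNDF_0$ is non-decreasing and $\CNIF_2$ non-increasing in $N$, and that the segment $[X_\rmapp^\eps,X_\rmapp^\eps+Z]$ remains inside the fixed ball of radius $R'$ on which the $\calO(\|X\|_p^{\alpha-1})$ estimate from lemma \ref{estglp} is available, which is precisely what the smallness condition $\eps\leq\CNIF_2(N)$ guarantees.
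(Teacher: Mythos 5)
Your proposal is correct and follows essentially the same route as the paper's proof: the mean value inequality for $G$, the small-ball bound $\|DG(U)\|_{\mathcal{L}(\ell_p^2)}\leq C\|U\|_p^{\alpha-1}$ from lemma \ref{estglp}, the uniform $\calO(\eps)$ bound on $X_\rmapp^\eps$ via lemma \ref{lm:estim_tY}, and the bound $\|Z(t)\|_p\leq\eps$ on $[0,t_\eps(Z_0)]$ from the definition of $t_\eps$. Your explicit choices of $\CNIF_2$ and $\CNDF_{\rm{L}}$ simply make concrete what the paper leaves implicit.
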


\begin{proof}
We first estimate
\begin{equation}
\label{accf}
\|G(X_\rmapp^\eps(t)+Z(t))-G(X_\rmapp^\eps(t))\|_p 
\leq \int_0^1 \|DG(X_\rmapp^\eps(t)+\theta Z(t))\|_{\calL(\ell_p^2)}\rmd \theta\  \|Z(t)\|_p.
\end{equation}
By estimate (\ref{estmapg}), there exists $\mu , C>0$ such that
for all $U\in \ell_p^2$ with $\| U \|_p \leq \mu$, we have
\begin{equation}
\label{eq:estim_U}
\| DG(U)\|_{\mathcal{L}(\ell_p^2)} \leq C\, \| U \|_p^{\alpha-1}.
\end{equation}
From definition (\ref{approxsolu}) and lemma \ref{lm:estim_tY},
assuming $\CNIF_2 \leq 1$,
there exists a c.n.d.f. $\CNDF_5$ such that 
\begin{equation*}
\sup_{t\in[0,t_\rmm(\eps)]} \|X_\rmapp^\eps(t)\|_p \leq \eps\, \CNDF_5(\|a\|_{L^\infty([0,T],\ell_p)}) 
\quad \text{for all } \eps\in(0,\CNIF_2],
\end{equation*}
hence for all $\theta \in [0,1]$ and $t\in[0,t_\eps (Z_0)]$
\begin{equation}
\label{estxpz}
\|X_\rmapp^\eps(t)+\theta Z(t)\|_p 
\leq \eps (1+\CNDF_5(\|a\|_{L^\infty([0,T],\ell_p)})).
\end{equation}
Consequently, 
there exists a c.n.i.f. $\CNIF_2 \leq 1$ such that for all $\eps\leq\CNIF_2(\|a\|_{L^\infty([0,T],\ell_p)})$,
$\theta \in [0,1]$ and $t\in[0,t_\eps ( Z_0)]$ we have
$\|X_\rmapp^\eps(t)+\theta Z(t)\|_p \leq \mu$. Then one obtains
estimate (\ref{estlemlip}) using 
bounds (\ref{eq:estim_U}) and (\ref{estxpz}) in conjunction with estimate (\ref{accf}).
\end{proof}

\vspace{1ex}

Now let us apply lemmas \ref{lm:residual} and \ref{lm:Lipschitz} to the Duhamel formulation 
(\ref{duhaz}) for $Z(t)$. 
This yields for all $\eps\leq\CNIF_2(\|a\|_{L^\infty([0,T],\ell_p)})$, 
$Z_0 \in \ell_p^2$ with $\|Z_0\|_p < \eps$
and for all $t\in [0 , t_\eps ]$
\begin{equation*}
\|Z(t)\|_p \leq b_\eps + C_{\rm{L}} \eps^{\alpha-1} \int_0^t \|Z(s)\|_p\ \rmd s,
\end{equation*}
with $b_\eps = \|Z_0\|_p + C_{\rm{E}} T \eps^{1+\eta}$.
Then one obtains by Gronwall lemma
\begin{equation}
\label{bornez}
\|Z(t)\|_p \leq b_\eps \exp(C_{\rm{L}} \eps^{\alpha-1}t_\eps) \leq b_\eps \exp(C_{\rm{L}} T).
\end{equation}

We now fix a c.n.d.f. $\CNDF_0 >0$ and
make the following stronger assumption on the initial distance between
exact and approximate solutions. 

\begin{Assumption} 
\label{as:Z0}
$Z_0=X(0)-X_\rmapp^\eps(0)$ satisfies
$$\|Z_0\|_p \leq \CNDF_0 (\|a\|_{L^\infty([0,T],\ell_p)}) \, T\, \eps^{1+\eta}.$$ 
\end{Assumption}

Assumption \ref{as:Z0} implies $\|Z_0\|_p <\eps $ as soon as
$\eps < \eps_3 =(C_0\ T)^{-1/\eta}$, and then estimate (\ref{bornez}) applies
for $\eps <  \rm{min}(\eps_2 , \eps_3)$. This yields
\begin{equation}
\label{estzsmall}
\|Z(t)\|_p \leq \CNDF_{\rm{R}} (\|a\|_{L^\infty([0,T],\ell_p)}) \,  \eps^{1+\eta}, \quad
t\in [0 , t_\eps ],
\end{equation}
where $\CNDF_{\rm{R}} = (C_0 + C_{\rm{E}})\, T\, \exp(C_{\rm{L}} T)$ is a c.n.d.f.
Consequently, for $\eps < \eps_T = \rm{min}(\eps_2 , C_{\rm{R}}^{-1/\eta})$ we have
$\|Z(t)\|_p < \eps$ for all $t\in [0 , t_\eps ]$, which implies
$t_\eps = t_\rmmax$ (see remark \ref{remte}).
Now estimate (\ref{estzsmall}) allows to bound $\|Z(t)\|_p$
for $t\in [0 , t_\rmmax ]$. Consequently, for all
$\eps < \eps_T(\|a\|_{L^\infty([0,T],\ell_p)})$ and $Z_0$ satisfying assumption \ref{as:Z0}
we have $t_\rmmax (\eps )= t_\rmm(\eps)$ and
\begin{equation}
\label{estzsmallmax}
\| X(t)-X_\rmapp^\eps (t)  \|_p \leq \CNDF_{\rm{R}} (\|a\|_{L^\infty([0,T],\ell_p)}) \,  \eps^{1+\eta}, \quad
t\in [0 , T/\eps^{\alpha-1} ].
\end{equation}

\vspace{1ex}

With the error estimate (\ref{estzsmallmax}) at hand, one can recover
an estimate of the same type for the leading order approximate solution
$X_a^\eps (t) = \eps Y_0(\eps^{\alpha-1}t,t)$. Indeed
\begin{equation}
\label{diff}
X(t) -  X_a^\eps (t)
= X(t) - X_\rmapp^\eps(t) + \eps^\alpha Y_1(\eps^{\alpha-1}t,t) .
\end{equation}
From lemma \ref{lm:estim_tY}, we already know 
a c.n.d.f. $\CNDF_1$ such that
for all $t\in[0,T/\eps^{\alpha-1}]$,
\begin{equation}
\label{y1eps}
\| Y_1(\eps^{\alpha-1}t,t)\|_p \leq \CNDF_1(\|a\|_{L^\infty([0,T],\ell_p}).
\end{equation}
Now let us set $C_0 = (C_{\rm{i}} +C_1) /T$ in assumption \ref{as:Z0}, where $C_{\rm{i}} >0$ is a fixed constant, and make the assumption 
$\|X^0-X_a^\eps(0)\|_p \leq C_{\rm{i}}\eps^{1+\eta}$
of theorem \ref{th:classical}.
Recalling that $\eps \leq 1$, $\eta +1 \leq \alpha $, 
and using (\ref{diff})--(\ref{y1eps}), one can check that
assumption \ref{as:Z0} is satisfied and thus estimate (\ref{estzsmallmax})
holds. Then using (\ref{diff})--(\ref{y1eps}) again, 
there exists a c.n.d.f. $C_T = C_{\rm{R}} + C_1$ such that for all $t\in[0,T/\eps^{\alpha-1}]$,
\begin{equation*}
\|X(t) - X_a^\eps (t) \|_p \leq
C_T(\|a\|_{L^\infty([0,T],\ell_p})\, \eps^{1+\eta}.
\end{equation*}
This completes the proof of theorem \ref{th:classical}.

\subsection{\label{nonclassical}Proof of theorem \ref{th:nonclassical}} 

Theorem \ref{th:nonclassical} consists of
a different kind of error estimate, where
the multiple-scale approximation is controlled on 
$\mathcal{O}(|\ln\eps|\, \eps^{1-\alpha })$ time scales going beyond $\eps^{1-\alpha}$, 
at the expense of
lowering the precision of (\ref{erroran}). To obtain this result, 
we adapt the proof of theorem \ref{th:classical} by
allowing $T$ to grow logarithmically in $\eps$.
Theorem \ref{th:nonclassical} is valid
when the Ansatz $X_a^\eps (t) \in \ell_p^2$ is bounded for $t\in \mathbb{R}^+$.
In that case, all estimates of sections \ref{resi} and \ref{errorclassic} involving constants
depending only on $\|a\|_{L^\infty([0,T],\ell_p)}$ 
can be made uniform w.r.t. $T$,
which allows one to set $T=\mathcal{O}(|\ln\eps|)$ when $\eps \rightarrow 0$.

\vspace{1ex}

In what follows we use the notations and definitions introduced in
section \ref{errorclassic}.
Let us consider a solution $a \in L^\infty ([0,+\infty ),\ell_p)$ of the amplitude equation \eqref{eq:ampl},
a fixed constant $\mu \in (0,\eta )$ and set
$$
T=\frac{\mu}{\tilde{C}_{\rm{L}}}\, |\ln\eps |,
$$
where $\tilde{C}_{\rm{L}} = C_{\rm{L}} (\|a\|_{L^\infty([0,+\infty ),\ell_p)})$ 
and $\eps \leq 1$. The initial error $Z_0$ is set to satisfy
assumption \ref{as:Z0} with $C_0 =M$, $M$ being a fixed constant
that will be subsequently determined.
It follows that $\| Z_0 \|_p =\mathcal{O}(|\ln\eps |\, \eps^{1+\eta}) < \eps$
provided $\eps$ is small enough. 
Assuming in addition $\eps \leq  \tilde\eps_2 = \eps_2 (\|a\|_{L^\infty([0,+\infty ),\ell_p)})$,
estimate (\ref{estzsmall}) ensures that
\begin{equation}
\label{estzsmallunif}
\|Z(t)\|_p \leq (M + \tilde{C}_{\rm{E}})\, T\, \exp(\tilde{C}_{\rm{L}} T)
 \,  \eps^{1+\eta}, \quad
t\in [0 , t_\eps ],
\end{equation}
where $\tilde{C}_{\rm{E}} = C_{\rm{E}}(\|a\|_{L^\infty([0,+\infty ),\ell_p)})$. 
Our choice of $T$ yields exactly $\exp(\tilde{C}_{\rm{L}} \,T)=\eps^{-\mu}$,
hence estimate (\ref{estzsmallunif}) becomes
\begin{equation}
\label{estzsmallmaxunif}
\|Z(t)\|_p \leq \frac{M + \tilde{C}_{\rm{E}}}{\tilde{C}_{\rm{L}}}\, \mu\, |\ln\eps |\, \eps^{1+\eta - \mu}, \quad
t\in [0 , t_\eps ].
\end{equation}
Consequently, for $\eps$ small enough we have 
$\|Z(t)\|_p < \eps$ for all $t\in [0 , t_\eps ]$, which implies
$t_\eps = t_\rmmax = T\, \eps^{1-\alpha}$ as 
shown in section \ref{errorclassic}.

Now, as previously observed in section \ref{errorclassic},
the error bound (\ref{estzsmallmaxunif}) yields 
an estimate of the same type for 
the leading order approximate solution $X_a^\eps (t)$,
thanks to the estimate 
\begin{equation}
\label{y1epsunif}
\| Y_1(\eps^{\alpha-1}t,t)\|_p \leq \tilde{C}_1, \quad
t\in[0,T/\eps^{\alpha-1}],
\end{equation}
with $\tilde{C}_1 = \CNDF_1(\|a\|_{L^\infty([0,\infty ),\ell_p})$.
Indeed, let us further
assume $\eps \leq e^{-1}$ and make the assumption
$\|X^0-X_a^\eps(0)\|_p \leq C_{\rm{i}}\, |\ln\eps |\, \eps^{1+\eta}$
of theorem \ref{th:nonclassical}
for some fixed constant $C_{\rm{i}}$.
Using identity (\ref{diff}) and the bounds given above, one obtains
$$
\|X^0-X_\rmapp^\eps(0)\|_p \leq (C_{\rm{i}} + \tilde{C}_1)\, |\ln\eps |\, \eps^{1+\eta}
$$
(we recall that $\eta  \leq \alpha -1$).
Consequently, assumption \ref{as:Z0} is satisfied with the choice
$C_0=M= (C_{\rm{i}} +\tilde{C}_1) \tilde{C}_{\rm{L}}/\mu$
and estimate (\ref{estzsmallmaxunif})
holds true for all $t\in[0,T\, \eps^{1-\alpha}]$. 
Then using (\ref{diff})-(\ref{y1epsunif}) and the definition of $T$, we get
for all $t\in[0, {\mu}\, \tilde{C}_{\rm{L}}^{-1}\, |\ln\eps |\, \eps^{1-\alpha}]$
\begin{equation*}
\|X(t) - X_a^\eps (t) \|_p \leq
C_{\rm{l}}\, |\ln\eps|\, \eps^{1+\eta -\mu},
\end{equation*}
with $C_{\rm{l}}=C_{\rm{i}} + 2 \tilde{C}_1 + \eta\, \tilde{C}_{\rm{E}} / \tilde{C}_{\rm{L}}$,
which proves estimate (\ref{estthmln}) for $\nu =  \tilde{C}_{\rm{L}}^{-1}$.
This ends the proof of theorem \ref{th:nonclassical}.

\section{\label{conclu}Conclusion}

We have shown that small amplitude oscillations in Newton's cradle are described by 
the DpS equation (\ref{dps}) over long times. From this result, we have estimated
on long time scales the maximal decay of small amplitude localized solutions
and proved the existence of stable long-lived breather states in Newton's cradle.

The justification of the DpS equation and the associated
estimates of maximal decay extend
straightforwardly to generalizations of (\ref{eq:cradle}) and (\ref{dps}) to arbitrary
space dimensions (i.e. for $n \in \mathbb{Z}^d$ and $d \geq 1$) when $x_n(t)$
defines a scalar field. However, generalizing our construction of long-lived breather states
would require an existence theorem for discrete breather solutions of the $d$-dimensional
DpS equation, which is not yet available for $d \geq 2$.
Other possible extensions of this work concern the generalization and justification
of the DpS equation when small spatial inhomogeneities are present in the original lattice (\ref{eq:cradle}),
as well as the addition of dissipative terms in (\ref{eq:cradle}) and (\ref{dps}).
Considering these effects is particularly important from a physical point of view 
when system (\ref{eq:cradle}) describes a granular chain \cite{JKC11}.

Other open problems concern the qualitative analysis of the DpS equation.
In particular, excitations generated from a localized disturbance and reminiscent of
traveling breather solutions have been numerically studied in \cite{JKC11,sahvm}, both
for the DpS equation and Newton's cradle. The existence of exact traveling breather solutions
of (\ref{dps}) is an open problem, and would imply 
(in the case of small amplitude waves) the existence of similar excitations in
Newton's cradle on long time scales. More generally, understanding in system (\ref{dps})
the complex mechanisms of fully nonlinear energy propagation from a localized disturbance
is a challenging open problem \cite{JKC11}. 
This would allow in particular to analyze the propagation of
nonlinear acoustic waves after an impact in granular chains with local potentials,
thanks to the connection we have established between (\ref{dps}) and
(\ref{eq:cradle1}).

\vspace{1ex}

\noindent
{\it Acknowledgements:}
This work is supported by the Rh\^one-Alpes Complex Systems Institute (IXXI).

\vspace{3ex}

\appendix
\begin{center}
{\Large\bf Appendix}
\end{center}

\section*{Simplified form of the amplitude equation}

This section provides
an explicit computation of the nonlinear term of the amplitude equation
(\ref{ampli}). Given the form (\ref{y0}) of $\mathcal{Y}_0$
and recalling that $P(X)=\zeta(X)e^{it}e_1+\cc$ (see lemma \ref{split}),
the amplitude $a(\tau)\in\ell_p (\mathbb{Z},\mathbb{C})$
satisfies the differential equation
$$
\partial_\tau a  =  \zeta(G_\alpha( \mathcal{Y}_0)),
$$
or more explicitly 
$$
i\partial_\tau a = \delta^+ f(\delta^- a),
$$
where 
\begin{eqnarray*}
\forall\, a\in \ell_p (\mathbb{Z},\mathbb{C}), &  & (f(a))_n = \tilde{f}(a_n ), \\
\forall\, a\in \mathbb{C}, & &
\tilde{f}(a)= 
\frac{1}{\sqrt2} \frac1{2\pi} \int_0^{2\pi} e^{-it} V'_\alpha \left(\frac{a\, e^{it}}{\sqrt2}+\cc\right) \rmd t.
\end{eqnarray*}
Below we compute the map $\tilde{f}$ explicitly.

\vspace{1ex}

Setting $a=r\, e^{i \theta}$ and using the change of variable $s=t+\theta$
in the integral defining $\tilde{f}$, one obtains
$$
\tilde{f}(a)=\frac{e^{i \theta }}{2^{3/2}\, \pi}
\int_{S^1}{ V_\alpha^\prime (\, \sqrt{2}\, r \cos{s}    \, )  \, e^{- i s}\, \rmd s},
$$ 
where one can fix $S^1=(-\pi ,\pi )$. Given the form (\ref{as:Hertz}) of the potential $V_\alpha$,
one obtains after elementary computations,
$$
\tilde{f}(a)=2^{\frac{\alpha -3}{2}}\, (k_- + k_+)\, c_\alpha \, a \, |a|^{\alpha -1},
$$
where
$$
c_\alpha = 
\frac{2}{\pi}
\int_{0}^{\pi /2}{ (\cos{t} )^{\alpha +1}   \, \rmd t}
$$
is a Wallis integral with fractional power $\alpha +1$.
Expressing $c_\alpha$ in terms of Euler's Gamma function leads to
$$
c_\alpha = 
\frac{1}{\pi}\frac{ \Gamma{(\frac{1}{2})}   \Gamma{(\frac{\alpha}{2}+1)}  }{  \Gamma{(\frac{\alpha +1}{2}+1)}   }
$$
(see \cite{abra}, formula 6.2.1 and 6.2.2, p. 258).
Since $\Gamma{(1/2)}=\sqrt{\pi}$ and $\Gamma{(a+1)}=a\, \Gamma{(a)}$, we obtain finally
$$
c_\alpha = 
\frac{
\alpha \, \Gamma{(\frac{\alpha}{2})}
}
{
\sqrt{\pi} (\alpha +1)  \Gamma{(\frac{\alpha +1}{2})}
}
.
$$


\end{document}